\newcommand{\rt}{\rightarrow}
\newcommand{\lrt}{\longrightarrow}
\newcommand{\st}{\stackrel}
\newcommand{\La}{\Lambda}
\newcommand{\Ga}{\Gamma}
\newcommand{\CA}{\mathcal{A} }
\newcommand{\CF}{\mathcal{F} }
\newcommand{\CG}{\mathcal{G} }
\newcommand{\CP}{\mathcal{P} }
\newcommand{\CQ}{\mathcal{Q} }
\newcommand{\CS}{\mathcal{S} }
\newcommand{\CT}{\mathcal{T} }
\newcommand{\CX}{\mathcal{X} }
\newcommand{\CY}{\mathcal{Y} }
\newcommand{\CV}{\mathcal{V}}
\newcommand{\CU}{\mathcal{U}}
\newcommand{\CB}{\mathcal{B} }
\newcommand{\Mod}{{\rm{Mod\mbox{-}}}}
\newcommand{\mmod}{{\rm{{mod\mbox{-}}}}}
\newcommand{\prj}{{\rm{prj}\mbox{-}}}
\newcommand{\Ind}{\rm{{Ind}\mbox{-}}}
\newcommand{\add}{{\rm{add}\mbox{-}}}
\newcommand{\Hom}{{\rm{Hom}}}
\newcommand{\Ext}{{\rm{Ext}}}
\newcommand{\End}{{\rm{End}}}
\newcommand{\bsm}{\begin{smallmatrix}}
	\newcommand{\esm}{\end{smallmatrix}}
\newcommand{\bbm}{\begin{matrix}}
	\newcommand{\ebm}{\end{matrix}}
\theoremstyle{plain}
\newtheorem{theorem}{Theorem}[section]
\newtheorem{corollary}[theorem]{Corollary}
\newtheorem{lemma}[theorem]{Lemma}
\newtheorem{proposition}[theorem]{Proposition}
\theoremstyle{definition}
\newtheorem{definition}[theorem]{Definition}
\newtheorem{example}[theorem]{Example}
\newtheorem{construction}[theorem]{Construction}
\newtheorem{remark}[theorem]{Remark}
\newtheorem{setup}[theorem]{Setup}
\theoremstyle{plain}
\theoremstyle{definition}
\numberwithin{equation}{section}
\begin{document}

\title[From subcategories to the entire module categories ]{From subcategories to the entire module categories}

\author[Rasool Hafezi]{Rasool Hafezi }

\address{School of Mathematics, Institute for Research in Fundamental Sciences (IPM), P.O.Box: 19395-5746, Tehran, Iran}
\email{hafezi@ipm.ir}

\subjclass[2010]{16G10, 16G60, 16G50.}

\keywords{ relative stable  Auslander algebra, triangular matrix algebra, Gorenstein projetive module, almost split sequence, Auslander-Reiten quiver}


\begin{abstract}
In this paper we  show that  how the representation theory of  subcategories (of the category of modules over an Artin algebra)  can be connected to the representation theory of all modules over some algebra. The subcategories  dealing with are some certain subcategories of the morphism categories (including submodule categories studied recently by Ringel and  Schmidmeier) and of the  Gorenstein projective modules over (relative) stable Auslander algebras. These two kinds of subcategories, as will be seen, are  closely related to each other. To make such a connection, we  will  define  a functor from each type of  the subcategories to the  category of modules over some Artin algebra. It is shown that to compute the almost split sequences in the subcategories it is enough to do the computation  with help of  the corresponding functors in the  category of modules over  some Artin algebra which is known and easier to work. Then as an application the most part  of  Auslander-Reiten quiver of the subcategories is obtained only  by  the Ausalander-Reiten quiver of  an appropriate algebra and next     adding the remaining vertices and arrows in an obvious way. As a special case, when $\La$ is a Gorenstein Artin algebra of finite representation type, then  the subcategories of Gorenstein projective modules over the $2 \times 2$ lower triangular matrix algebra over $\La$ and the stable Auslander algebra of $\La$  can be estimated by the  category of modules  over the stable Cohen-Macaulay Auslander algebra of $\La$.
\end{abstract}

\maketitle
\section{Introduction}
The most basic problem in the  representation theory  of Artin algebras is to classify the indecomposable finitely generated modules. Since the general problem is known to be very difficult, special attention might be paid to some well-behaved subcategories. In this paper we have plane to investigate simultaneously  the subcategory of Gorenstein projective modules over the relative stable Auslander algebras and some certain subcategories of the morphism category. As will be discussed in below, there is a nice relationship between 
 these two types of subcategories considered  in our paper. The notion of Gorenstein projective modules was first defined by Maurice Auslander in the mid-sixties, see \cite{AB}, for  finitely generated projective  $\La$-modules over a Noetherian ring, called $G$-dimension zero. Later  they have found important applications in commutative algebra, algebraic geometry, singularity theory and relative homological algebra. Enochs and Jenda in \cite{EJ} generalized the notion of Gorenstein projective modules to (not necessarily finitely generated) modules over any ring $R$, under the name of Gorenstein projective modules. Studying of (finitely generated)  Gorenstein projective modules  has attracted  attention in the setting of Artin algebras, see for instance \cite{AR2, AR3, CSZ, RZ3, RZ4} and etc. More recently, Ringel and Zhang in \cite{RZ2}, was shown that  there is a nice bijection between the indecomposable modules over  a finite dimensional hereditary algebra over an algebraic closed filed and the indecomposable ones of the stable category of Gorenstein projective modules over the tensor algebra of the algebra of dual numbers and the given hereditary  algebra.\\
  Another subcategories we are interested in this paper are the subcategories of  the morphism category $H(\La)$ of Artin algebra $\La$ arising by a quasi-resolving subcategory $\CX$ of the category $\mmod \La$ of finitely generated  right modules over $\La$.   Denote by $\CS_{\CX}(\La)$ the subcategory of $H(\La)$ consisting of those monomorphisms in $\mmod \La$ with terms in  $\CX$ such that whose cokernels belong to $\CX.$ When $\CX=\mmod \La$, the subcategory $\CS_{\mmod \La}(\La)$, simply $\CS(\La)$, becomes the submodule category, which have been studied  extensively by Ringel and  Schmidmeier in \cite{RS1, RS2}, and also a generalization of their
 works given in \cite{LZ2, XZ, XZZ}. In addition,  a surprising link, established in \cite{KLM},   between the stable submodule category with the singularity theory via weighted projective lines of type $(2,3,p)$ is discovered. It is  worth noting that when $\CX$ is equal to the subcategory $\rm{Gprj}\mbox{-}\La$ of Gorenstein projective $\La$-modules, then $\CS_{\rm{Gprj}\mbox{-}\La}(\La)$ is nothing else than the subcategory of Gorenstein projective modules over $T_2(\La)$, lower triangular $2 \times 2$ matrix algebra over $\La,$  here $\mmod T_2(\La)$ is identified by $H(\La)$.
 
Our main purpose in this paper is to study   the Auslander-Reiten theory, as a powerful tool in the modern representation theory, over the above-mentioned subcategories by approaching with modules category over some Artin algebras. The Auslander-Reiten theory for the module categories over an Artin algebra is much more known than some subcategories. At least, the Auslander-Retien translation in the module categories is computed, that is, $D\rm{Tr}$. Such a connection can be helpful to transfer those known results to the subcategories. This aim is inspired by the work of \cite{RZ1} as follows: In \cite{RZ1} the authors make a connection between $\CS(k[x]/(x^n))$ and the module category $\mmod \Pi_{n-1}$ of the preprojective algebra $\Pi_{n-1}$ of type $A_{n-1}$ via defining two functors  which originally come from the  works of Auslander-Reiten and Li-Zhang. We know that $\Pi_{n-1}$ is the stable Auslander algebra of representation-finite algebra $k[x]/(x^n)$ (see \cite{DR}, Theorem 3, Theorem 4 and Chapter 7). Hence those functors appeared in \cite{RZ1} are indeed functors from $\CS(k[x]/(x^n))$ to $\rm{Aus}(\underline{\rm{mod}}\mbox{-}k[x]/(x^n))$. We recall that the stable Auslander algebra $\rm{Aus}(\underline{\rm{mod}}\mbox{-}\La)$ of a representation-finite algebra $\La$ is the endomorphism algebra $\underline{\End}_{\La}(M)$  in the sable category $\underline{\rm{mod}}\mbox{-}\La$,  where $M$ is  a  representation generator  for $\La$, i.e., the additive closure $\rm{add}\mbox{-}M$ of $M$ in $\mmod \La$ is equal to $\mmod \La.$ Recall  for an additive category $\mathcal{C}$, $\mmod \mathcal{C}$ denotes the category of finitely presented functors over $\mathcal{C}$. It is known that in the case that $\mathcal{C}$ is generated by an object $C$, i.e., $\rm{add}\mbox{-}C=\mathcal{C}$, then the evaluation functor on  $C$ induces an  equivalence of the categories $\mmod \mathcal{C}\simeq \mmod \End_{\mathcal{C}}(C).$  The  observation given in \cite{RZ1} motivates us to define a functor from $\CS_{\CX}(\La)$ to $\mmod \underline{\CX}$, denote by $\Psi_{\CX}$, see Construction \ref{FirstCoonstr}, for a  a quasi-resolving subcategory $\CX$ of $\mmod \La.$  Here $\underline{\CX}$ is the stable  category of $\CX$, that is, the quotient of $\CX$ by the ideal of morphisms factoring through a projective module. The functor $\Psi_{\CX}$ is a relative version of one of the two  functors   studied  in \cite{RZ1}, and also \cite{E},  in a   functorial language. The functor $\Psi_{\CX}$ induces an equivalence between the  quotient category $\CS_{\CX}(\La)/\CU$ and $  \mmod \underline{\CX}$, where $\CU$ is the ideal generated by the objects in $\CS_{\CX}(\La)$ of the form  $(X \st{1}\rt X )$ and $ (0 \rt X)$, where $X$ runs through objects in $\mathcal{X}$ (see Theorem \ref{Thefirst}). With some additional conditions on $\CX$, see Setup \ref{Setup1}, we will observe in Section \ref{Section 5} the almost split sequences in $\CS_{\CX}(\La)$ is preserved by $\Psi_{\CX}.$  Next, since the notion of the Auslander-Reiten quivers is based on the almost split sequences, by our result we observe  that the Auslander-Reiten quiver $\Gamma_{\underline{\CX}}$ of $\mmod \underline{\CX}$  can be considered as a valued full subquiver of $\Gamma_{\CS_{\CX}(\La)}$ such that contains all vertices in $\Gamma_{\CS_{\CX}(\La)}$ except those of vertices corresponding to the isomorphism classes of the  indecomposable objects having of the form  $(X\st{1}\rt X)$ and $(0\rt X)$, see Theorem \ref{main4}. Therefore, the task to find the full of $\Gamma_{\CS_{\CX}(\La)}$ is only to add the remaining vertices  corresponding to the  isomorphism classes of  the indecomposable objects in $\CS_{\CX}(\La)$  having the simplest structure among of  the other indecomposable objects in $\CS_{\CX}(\La),$  and arrows attached to them. In the case that $\CX$ is of finite representation type, i.e., $\CX=\rm{add}\mbox{-}X$ for some $X \in \CX,$ we have $\mmod \CX\simeq \mmod \rm{Aus}(\underline{\CX})$, where $\rm{Aus}(\underline{\CX})=\underline{End}_{\La}(X)$. The algebra $\rm{Aus}(\underline{\CX})$ is called the relative stable Auslander algebra with respect to the subcategory $\CX$.  Therefore, in this way, the representation theoretic properties of the subcategory  $\CS_{\CX}(\La)$ via the functor $\Psi_{\CX}$ can be approached by the module category over the associated relative stable Auslander algebra $\rm{Aus}(\underline{\CX})$. 

In parallel with  the subcategories of the form $\CS_{\CX}(\La)$, we are also interested  in considering the subcategory $\rm{Gprj}\mbox{-}\underline{\CX}$ of Gorenstein projective functors in   the category $\mmod \underline{\CX}$ of  finitely presented functors over $\underline{\CX}$. By the way, as shown in $(\dagger)$, the second type of the subcategories, considered in the paper, is closely related  to the first type of the  subcategories, have been already discussed. Let $\CX $ be a quasi-resolving subcategory of $\mmod \La$ as in Setup \ref{4.17}.  Denote $\CY=\CX\cap \rm{Gprj}\mbox{-}\La$. By a characterization of  Gorenstein projective functors in $\mmod \underline{\CX}$ via  their projective resolutions in $\mmod \CX$ given in  Theorem \ref{Classification of Gorenstein projective functors}, we construct our second important functor   ${}_{\underline{\CY}}\hat{\varUpsilon}_{\underline{\CX}}:\mmod \underline{\CY}\rt \rm{Gprj}\mbox{-}\underline{\CX}$, where $\rm{Gprj}\mbox{-}\underline{\CX}$ denotes the subcategory of Gorenstein projective functors in $\mmod \underline{\CX}$. That is fully faithful exact. An important property of the functor ${}_{\underline{\CY}}\hat{\varUpsilon}_{\underline{\CX}}$
is that  the  almost split sequences in $\mmod \underline{\CY}$ are   mapped into the  almost split sequences in  $\rm{Gprj}\mbox{-}\underline{\CX}$ (Proposition \ref{AlmsotsplitXY}). Hence, as we are looking for, the $\mmod \underline{\CY}$ can be viewed as suitable candidate to study the subcategory $\rm{Gprj}\mbox{-}\underline{\CX}$. Because $\CY$ satisfies the required conditions in the first part, then we have:
$$\CS_{\CY}(\La)\st{\Psi_{\CY}}\rt \mmod \underline{\CY}\st{{}_{\underline{\CY}}\hat{\varUpsilon}_{\underline{\CX}}}\rt \rm{Gprj}\mbox{-}\underline{\CX} \ \ \ \ \ \ \ \ \ \ (\dagger)$$
such that $\Psi_{\CY}$ is full, dense and objective, and ${}_{\underline{\CY}}\hat{\varUpsilon}_{\underline{\CX}}$ is fully faithful, exact, and  almost dense (see Definition \ref{almsotdense}) if assume $\CX$ is of finite representation type. Moreover, each of the functors behave well with the almost split sequences.

  Specializing our two approaches in the above to Gorenstein algebras,  we reach the following interesting corollary.

 \begin{corollary}
 	Let $\La$ be a Gorenstein algebra of finite representation type. Let $A=\rm{Aus}(\underline{\rm{mod}}\mbox{-}\La)$ and $B=\rm{Aus}(\underline{\rm{Gprj}}\mbox{-}\La)$ be, respectively, the stable Auslander algebra and stable Cohen-Macaulay  Auslander algebra  of $\La$.
 	Set ${}_{\CG}\Gamma_{\La}:={}_{\underline{\rm{Gprj}}\mbox{-}\La}\hat{\varUpsilon}_{\underline{\rm{mod}}\mbox{-}\La} $ and $\Psi_{\CG}:=\Psi_{\rm{Gprj}\mbox{-}\La}$.  Then we have the following assertions. 
 	\begin{itemize}
 		\item [$(1)$] There are  the following  functors
 		$$\rm{Gprj}\mbox{-}T_2(\La) \st{\Psi_{\CG}}\lrt \mmod B\st{{}_{\CG}\Gamma_{\La}} \rt \rm{Gprj}\mbox{-} A,$$
 		where the functor $\Psi_{\CG}$ is full, dense and objective, and the functor  ${}_{\CG}\Gamma_{\La}$ is fully faithful, almost dense and exact. Moreover, each of the functors ${}_{\CG}\Gamma_{\La}, \Psi_{\CG}$ preservers the almost split sequences which do not end at either $(0\rt X), (X\st{1}\rt X)$ or $(\Omega_{\La}(X)\hookrightarrow P)$, where $X$ is a non-projective indecomposable module in $\mmod \La$;
  		\item [$(2)$] The Auslander-Reiten quiver $\Gamma_{B}$  is a full valued  subquiver of the Auslander-Reiten quivers  $\Gamma_{\rm{Gprj}\mbox{-}T_2(\La)}$  and $\Gamma_{\rm{Gprj}\mbox{-} A}$, i.e.,
  		$$\Gamma_{\rm{Gprj}\mbox{-}T_2(\La)}\hookleftarrow \Gamma_{B }\hookrightarrow \Gamma_{\rm{Gprj}\mbox{-} A}.$$  		
 	\end{itemize}
 \end{corollary}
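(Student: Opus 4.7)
The plan is to view the corollary as a specialisation of the two general machines established earlier, applied with two different choices of the quasi-resolving subcategory: once with $\CX=\rm{Gprj}\mbox{-}\La$ in order to obtain $\Psi_{\CG}$, and once with the pair $(\CX,\CY)=(\mmod\La,\rm{Gprj}\mbox{-}\La)$ in order to obtain ${}_{\CG}\Gamma_{\La}$. The observation that aligns the two is that the target of the first functor and the source of the second both equal $\mmod \underline{\rm{Gprj}}\mbox{-}\La$.

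First I would verify the hypotheses of the general theorems in these two instances. That $\rm{Gprj}\mbox{-}\La$ is a quasi-resolving subcategory of $\mmod\La$ satisfying Setup \ref{Setup1} uses only that $\La$ is Gorenstein, since then $\rm{Gprj}\mbox{-}\La$ is a resolving, functorially finite subcategory of $\mmod\La$. Similarly the pair $(\mmod\La,\rm{Gprj}\mbox{-}\La)$ satisfies Setup \ref{4.17}. The finite representation type assumption, combined with Gorensteinness, yields finite Cohen--Macaulay type, so there exist a representation generator $M$ of $\mmod\La$ and a Cohen--Macaulay generator $G$ of $\rm{Gprj}\mbox{-}\La$. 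Evaluation at $\underline{M}$ and $\underline{G}$ then produces canonical equivalences
$$\mmod \underline{\rm{mod}}\mbox{-}\La\simeq\mmod A,\qquad \mmod \underline{\rm{Gprj}}\mbox{-}\La\simeq\mmod B,$$
under which $\Psi_{\CG}$ lands in $\mmod B$ and ${}_{\CG}\Gamma_{\La}$ becomes a functor $\mmod B\rt \rm{Gprj}\mbox{-}A$. Combining this with the identification $\CS_{\rm{Gprj}\mbox{-}\La}(\La)=\rm{Gprj}\mbox{-}T_2(\La)$ recalled in the introduction yields precisely the composite written in (1).

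The properties asserted in (1) are then read off from the general results. Theorem \ref{Thefirst} gives that $\Psi_{\CG}$ is full, dense and objective, its kernel ideal being generated by the indecomposables of the form $(0\rt X)$ and $(X\st{1}\rt X)$; its behaviour on almost split sequences is the main content of Section \ref{Section 5}. Fully faithfulness and exactness of ${}_{\CG}\Gamma_{\La}$ follow from its construction together with Theorem \ref{Classification of Gorenstein projective functors}, while preservation of almost split sequences is Proposition \ref{AlmsotsplitXY}. Almost density uses finite representation type: inspecting the construction one sees that the only indecomposable Gorenstein projectives in $\rm{Gprj}\mbox{-}A$ missed by the image are those corresponding to the minimal projective presentations $(\Omega_{\La}(X)\hookrightarrow P)$ with $X$ indecomposable non-projective, and there are only finitely many such.

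Finally, (2) is a formal consequence of (1). Since $\Psi_{\CG}$ preserves the almost split sequences in $\rm{Gprj}\mbox{-}T_2(\La)$ that do not end at one of the excluded objects, Theorem \ref{main4} identifies $\Gamma_B$ with the full valued subquiver of $\Gamma_{\rm{Gprj}\mbox{-}T_2(\La)}$ obtained by deleting the vertices of the form $(0\rt X)$ and $(X\st{1}\rt X)$; running the same argument through ${}_{\CG}\Gamma_{\La}$ realises $\Gamma_B$ as a full valued subquiver of $\Gamma_{\rm{Gprj}\mbox{-}A}$, this time with the vertices corresponding to $(\Omega_{\La}(X)\hookrightarrow P)$ being the ones added in the embedding. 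The main obstacle I anticipate is purely bookkeeping: one must keep careful track of which objects in $\mmod B$ are kept on each side and which are lost, so that the two embeddings of $\Gamma_B$ produced by the two functors are compatible and the valuations agree. Once this alignment is in place, (2) drops out formally from the functorial preservation statements in (1).
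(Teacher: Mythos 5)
Your proposal is correct and follows essentially the same route as the paper, which presents this corollary as a direct specialization of its general machinery: Theorem \ref{Thefirst} with $\mathcal{C}=\rm{Gprj}\mbox{-}\La$ for $\Psi_{\CG}$, Proposition \ref{Proposition 4.10} and Corollary \ref{extension functor stable auslander algebra} for ${}_{\CG}\Gamma_{\La}$, Propositions \ref{AlmostSplitS_H} and \ref{AlmsotsplitXY} for the almost split sequences, and Theorems \ref{main4} and \ref{main5} for the quiver embeddings, all glued along the common category $\mmod \underline{\rm{Gprj}}\mbox{-}\La\simeq\mmod B$. The only caveat is a small imprecision in your description of the functors' essential image: the indecomposables missed by ${}_{\CG}\Gamma_{\La}$ are the $(-,\underline{X})$ with $X$ indecomposable and \emph{not Gorenstein projective} (not merely non-projective), which matches Corollary \ref{extension functor stable auslander algebra}(2).
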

Since $\rm{Gprj}\mbox{-}\La$ is a triangulated category,  $\rm{Aus}(\underline{\rm{Gprj}}\mbox{-}\La)$ is a self-injective algebra, assuming $\rm{Gprj}\mbox{-}\La$ has only finitely many indecomposable modules up to isomorphisms. Based on the above observation,  the study of  the subcategory of Gorenstein projective modules over $T_2(\La)$ (and $\rm{Aus}(\underline{mod}\mbox{-}\La)$ if $\La$ is of finite representation type) is connected to module category over a self-injective algebra.  The class of self-injective algebras is one of the important  classes in the representation theory of Artin algebras which are well-understood. Hence the knowledge concerning  the class of self-injective algebras by the above observation  may carry over into the  subcategory of Gorenstein  projective modules over $T_2(\La)$, and also over $\rm{Aus}(\underline{\rm{mod}}\mbox{-}\La)$ where  $\La$ is assumed to be of finite representation type. 

The paper is organized as follows. In Section 2,  some important facts and notions, including functor categories, Gorenstein projective modules and triangular matrix algebras,  which will be needed  in the paper are recalled.  In Section 3, we will  define the functor $\Psi_{\CX}$ and  Theorem \ref{Thefirst} will be proved. In Section 4, firstly, an explicit description of $n$-th syzygies of the functors in $\mmod \underline{\CX}$ is given in terms of their projective resolutions in $\mmod \CX$, see Corollary \ref{N-the syzygy}. Secondly, we will use the description of  $n$-th syzygies to give a characterization of Gorenstein projective functors in $\mmod \underline{\CX}$ (Theorem \ref{Classification of Gorenstein projective functors}). This    characterization is used  to define the functor ${}_{\underline{\CY}}\hat{\varUpsilon}_{\underline{\CX}}:\mmod \underline{\CY}\rt \rm{Gprj}\mbox{-}\underline{\CX}$, which is the second important functor in the paper.  In Section 5,  we will begin  with the definition of almost split sequences for the subcategories which are closed under extensions, and then we prove that the functors $\Psi_{\CX}$ and ${}_{\underline{\CY}}\hat{\varUpsilon}_{\underline{\CX}}$ are used as a tool to transfer the almost split sequences between the corresponding categories. In Section 6, the notion of the Auslander-Reiten quivers for a Krull-Schmidt category is recalled and then as a direct consequence of the results of Section 5, Theorem \ref{main4} is stated to give a comparison between the Auslander-Reiten quivers   $\Gamma_{\underline{\CY}}$, $\Gamma_{\rm{Gprj}\mbox{-} \underline{\CX}}$ and $\Gamma_{\CS_{\CY}(\La)}$. Finally,  some examples are provided to explain that how our results are helpful to compute the Auslander-Reiten quivers of the subcategories of Gorenstein projective modules.
\section{Preliminaries}
\subsection{Functor categories}\label{simple}
Let $\CA$ be an additive category  and $\mathcal{C}$ a subcategory of $\CA.$ We denote by $\Hom_{\CA}(X, Y)$ the set of morphisms from $X$ to $Y.$  Denote by $\rm{Ind}\mbox{-}\CA$ the set of isomorphisms classes of indecomposable objects in $\CA.$
An (right) $\CA$-module is a contravariant additive functor from $\CA$ to the category of abelian groups. We call an $\CA$-module $F$ \emph{finitely presented} if there exists an exact sequence $\Hom_{\CA}(-, X) \st{f} \rt \Hom_{\CA}(-, Y) \rt F \rt 0.$ 
We denote by $\mmod \CA$ the category of finitely presented $\CA$-modules. We call $\mathcal{C}$ contravariantly (resp. covariantly) finite in $\CA$ if $\Hom_{\CA}(-, X)|_{\mathcal{C}}$ (resp. $\Hom_{\CA}(-, X)|_{\mathcal{C}}$) is a finitely generated $\mathcal{C}$-module for any $X$ in $\CA.$ We call $\mathcal{C}$ \emph{functorially finite} if it is both contravariantly and covariantly finite.
 It is known that if $\mathcal{C}$ is a contravariantly finite subcategory of abelian category $\CA$, then $\mmod\mathcal{C}$ is an abelian category.

Let $\CA$ be an abelian category with enough projectivs and  $\mathcal{C}$ contains all projective objects of $\CA.$  We consider the stable category of $\mathcal{C}$, denoted by $\underline{\mathcal{C}}.$ The objects of $\underline{\mathcal{C}}$ are the same as the objects of $\mathcal{C}$, which we usually denote by  $\underline{X}$ when an object $X \in \mathcal{C}$ considered as an object in the stable category, and  the morphisms are given by $\Hom_{\underline{\mathcal{C}}}(\underline{X}, \underline{Y})= \Hom_{\mathcal{C}}(X, Y)/ \CP(X, Y)$, where $\CP(X, Y)$ is the subgroup of $\Hom_{\mathcal{C}}(X, Y)$  consisting of those morphisms from $X $ to $Y$ which factor through a projective object in $\CA.$ We also denote by $\underline{f}$ the residue class of $f: X \rt Y$ in $\Hom_{\underline{\mathcal{C}}}(\underline{X}, \underline{Y})$. In order to simplify, we will use $(-, X)$, resp. $(-, \underline{X})$, for the representable functor $\Hom_{\mathcal{C}}(-, X)$, resp. $\Hom_{\underline{\mathcal{C}}}(-, \underline{X})$, in $\mmod \mathcal{C}$, resp. $\mmod \underline{\mathcal{C}}.$ It is well-known that the canonical functor $\pi: \mathcal{C}\rt \underline{\mathcal{C}}$ induces a fully faithful functor  functor $\pi^*:\mmod \underline{\mathcal{C}}\rt \mmod \mathcal{C}$. Hence due to this embedding we can  identify the functors in $\mmod \underline{\mathcal{C}}$ as those functors in $\mmod \mathcal{C}$ which vanish on the projective objects.   

We  assume throughout this paper that $\La$ is an Artin algebra over a commutative artinian ring $k$. A subcategory $\CX$ of $\mmod \La$ is {\it always} a {\it full} subcategory of $\mmod \La$  {\it closed under isomorphisms, finite direct sums and direct summands.} The subcategory $\CX$ is called of finite representation type if $\Ind\CX$ is a finite set. Here, $\rm{Ind}\mbox{-}\CX$ denotes the set of all non-isomorphic indecomposable modules in $\CX$. An Artin algebra  $\La$ is called of finite representation type, or  simply representation-finite,  if $\mmod\La$ is of finite representation  type. If $\CX$ is of finite representation type, then it admits a representation generator, i.e., there exists $X \in \CX$ such that $\CX=\add X$, the subcategory of $\mmod \La$ consisting of all direct summands of all finite direct sums of copies of $X$. It is known that $\add X$ is a functorially finite subcategory of $\mmod\La$. To avoid complicated notation, for $\rm{prj}\mbox{-} \Lambda \subset \CX \subset \mmod \La$, we show $\Hom_{\CX}(X, Y)$, resp. $\Hom_{\underline{\CX}}(\underline{X}, \underline{Y}),$ by $\Hom_{\La}(X, Y)$, resp. $\underline{\Hom}_{\La}(X, Y)$. Set $\rm{Aus}(\CX, X)=\End_{\La}(X)$, when $\CX$ is a subcategory with representation generator $X$. Clearly $\rm{Aus}(\CX, X)$ is an Artin algebra. It is known that the evaluation functor $\zeta_X:\mmod\CX \lrt \mmod\rm{Aus}(\CX, X)$ defined by $\zeta_X(F)=F(X)$, for $F \in \mmod\CX$, gives an equivalence of categories, see \cite[Proposition 2.7 (c)]{Au1}. Let us give a proof of this fact. Set $A=\rm{Aus}(\CX, X)$. Density: assume an $A$-module $N$ is given. The contravariant functor $F: \CX\rt \rm{A}b$ (not necessarily additive) is defined as follows: First, let $F(X)=N$ and for any $f:X\rt X$, let $F(f)=r_f$ where $r_f:N\rt N$ is defined by using  right $A$-module  structure of $N$, i.e., $x \in N \mapsto xf.$  For any $n, m> 0$, let $F(X^n)=N^n$, and for  any morphism $f: X^n\rt X^m$, with matrix factorization $[f_{ji}]_{1\leqslant i \leqslant n}^{ 1 \leqslant j \leqslant m}$,  $F(f): N^n\rt N^m$ is defined as $[r_{f_{ji}}]_{1\leqslant i \leqslant n}^{ 1 \leqslant j \leqslant m}$ . Now, let $M$ be a direct summand of $X^n$ for some $n>0$. There exists an idempotent $a:X^n\rt X^n$ such that $\text{Im} \ a=M$. Define $F(M)=\text{Im} \ F(a)$. By construction $\zeta_X(F)=N$.  Faithfulness: it follows from this fact that the regular module  $\La$ is isomorphic to some direct summand of $X.$ Fullness: assume an $A$-homomorphism $g:F(X)\rt G(X)$ is given. The same way as the proof of the density works to construct a natural transformation $\sigma:F\rt G$ such that $\zeta_X(\sigma)=g.$ \\ The functor $\zeta_X$ also induces an equivalence of categories $\mmod\underline{\CX} \simeq \mmod\rm{Aus}(\underline{\CX}, \underline{X})$, only by the  restriction. Recall that $\rm{Aus}(\underline{\CX}, \underline{X})=\End_{\La}(X)/\CP$, where $\CP=\CP(X, X)$.  The Artin algebra $\rm{Aus}(\CX, X)$, resp. $\rm{Aus}(\underline{\CX}, \underline{X})$, is called the  relative, resp. stable, Auslander algebra of $\La$ with respect to the subcategory $\CX$ and 
 the representation generator $X$. For the case $\CX=\mmod \La$, we obtain the (stable) Auslander algebras.  If $X'$ is another representation generator of $\CX$, then $\rm{Aus}(\CX, X)$, resp. $\rm{Aus}(\underline{\CX}, \underline{X})$, and  $\rm{Aus}(\CX, X')$, resp. $\rm{Aus}(\underline{\CX}, \underline{X'}),$ are Morita equivalent.  If no ambiguity may rise, we drop $X$ and $\underline{X}$ in the relevant notations.

\subsection{Gorenstein projective objects}
Let $\CA$ be an abelian category with enough projectives. 
A complex $$P^\bullet:\cdots\rightarrow P^{-1}\xrightarrow{d^{-1}} P^0\xrightarrow{d^0}P^1\rightarrow \cdots$$ of  projective objects in $\CA$ is said to be \emph{totally acyclic} provided it is acyclic and the Hom complex $\Hom_{\La}(P^\bullet, Q)$ is also acyclic for any projective object $Q$ in $\CA$. An object $M$ in $\CA$ is said to be  \emph{Gorenstein projective} provided that there is a totally acyclic complex $P^\bullet$ of   projective objects over $\CA$ such that $M \simeq  \text{Ker}\ d^0$.  We denote by $\rm{Gprj}\mbox{-}\CA$ the full subcategory of $\CA$ consisting of all Gorenstein projective objects in $\CA$. When, for an additive category $\mathcal{C}$, $\mmod \mathcal{C}$ is abelian, we will use $\rm{Grpj}\mbox{-}\mathcal{C}$ to denote the subcategory of  Gorenstein projective objects in $\mmod \mathcal{C}$. Also, for simplicity, when $\CA=\mmod \La$, the subcategory of Gorenstein projective modules in $\mmod \La$ is denoted by $\rm{Gprj}\mbox{-}\La.$

An Artin algebra $\La$ is  $\rm{CM}$-finite, if there are only finitely many isomorphism classes of  finitely generated Gorenstein projective indecomposable $\La$-modules. Clearly, $\La$ is a $\rm{CM}$-finite algebra if and only if there is a finitely generated module $E$ such that $\rm{Gprj} \mbox{-} \La=\rm{add} \mbox{-}E$.  
 If $\La$ is self-injective, then $\rm{Gprj} \mbox{-} \La= \mmod \La,$ so $\La$ is $\rm{CM}$-finite if and only if $\La$ is representation-finite. If $E$ is a   representation generator of $\rm{Gprj} \mbox{-} \La$, then the relative (resp. stable) Auslander algebra  $\rm{Aus}(\rm{Gprj} \mbox{-} \La, E)=\rm{End}_{\La}(E)$, resp. $\rm{Aus}(\underline{\rm{Gprj}} \mbox{-} \La, \underline{E})=\underline{\rm{End}}_{\La}(E)$, in the literature is usually called the (resp. stable) Cohen-Macaulay Auslander algebra of $\La.$ The reason of such a naming might be since in some context used ``Cohen-Macaulay'' instead of ``Gorenstein-projective''. 
 \subsection{Triangular matrix algebras}
 Let $\CA$ be an abelian category and let $H(\CA)$ be the morphism category over  $\CA.$ Indeed, the objects in $H(\CA)$ are the morphisms in $\CA$, and morphisms are given by the commutative diagrams. We can consider  objects in $H(\CA)$ as the representations over the quiver $\mathbb{A}_2:v \st{a}\rt w$ by objects and morphisms in $\CA,$ usually denoted by $\rm{rep}(\mathbb{A}_2, \CA).$ In case that $\CA=\mmod \La$ we know by a  general fact the category $\rm{rep}(\mathbb{A}_2, \mmod \La)$, or simply $\rm{rep}(\mathbb{A}_2,  \La)$,  is equivalent to the category of finitely generated right module over the path algebra $\La\mathbb{A}_2 \simeq T_2(\La)$, where  $T_2(\La)= \tiny {\left[\begin{array}{ll} \La & 0\\ \La & \La \end{array} \right]}$, lower triangular $2\times 2$ matrix algebra over $\La.$ Since the categories $\rm{rep}(\mathbb{A}_2, \La)$ and  $H(\mmod \La)$, for short $H(\La)$, are equivalent to $\mmod T_2(\La)$, then by these equivalences we can naturally define the notion of  Gorenstein projective representations (or morphisms) in $\rm{rep}(\mathbb{A}_2, \La)$ (or in $H(\La)$), which comes from the concept of Gorenstein projective modules over $T_2(\La)$. There is the following local characterization of Gorenstein projective representations in $\rm{rep}(\mathbb{A}_2, \La)$:
 
 \begin{lemma}(\cite[Theorem 3.5.1 ]{EHS} or \cite[Theorem 5.1]{LZ1}) \label{Gorenproj charecte}
 	Let $(X \st{f}\rt Y)$ be a representation  in $\rm{rep}(\mathbb{A}_2, \La)$. Then $(X \st{f} \rt Y)$ is a Gorenstein projective representation  if and only if $(1)$ $X, Y$ and $\text{Cok} \ f$ are in $\rm{Gprj}\mbox{-}\La$,  and $(2)$ $f$ is a monomorphism. 	
 \end{lemma}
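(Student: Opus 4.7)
The key preliminary for both directions is the well-known description of the indecomposable projective objects of $\rm{rep}(\mathbb{A}_2,\La)$: they are precisely $(0\rt P)$ and $(P\st{1}\rt P)$ with $P\in\prj\La$, so every projective representation is isomorphic to one of the form $(A\st{\binom{1}{0}}\rt A\oplus B)$ with $A,B\in\prj\La$. The other basic ingredient is the pair of $\Hom$-identifications
$$\Hom_{H(\La)}\big((X'\st{f'}\rt Y'),(Q\st{1}\rt Q)\big)\simeq\Hom_\La(Y',Q),$$
$$\Hom_{H(\La)}\big((X'\st{f'}\rt Y'),(0\rt Q)\big)\simeq\Hom_\La(\Coker f',Q),$$
for $Q\in\prj\La$, which are immediate by unwinding the definition of a morphism in $H(\La)$.

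For the ``if'' direction, assume $X,Y,\Coker f\in\rm{Gprj}\mbox{-}\La$ and $f$ is a monomorphism. Then the short exact sequence of representations
$$0\rt (X\st{1}\rt X)\rt (X\st{f}\rt Y)\rt (0\rt\Coker f)\rt 0$$
is available, and since $\rm{Gprj}\mbox{-}T_2(\La)$ is closed under extensions it suffices to show that the two outer terms are Gorenstein projective. Given a totally acyclic complex $P^\bullet$ of projective $\La$-modules with zeroth cycle $X$, the complex $(P^\bullet\st{1}\rt P^\bullet)$ consists of projective representations, and the two $\Hom$-identifications above reduce its total acyclicity to that of $P^\bullet$. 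The case $(0\rt\Coker f)$ is handled analogously using the complex $(0\rt P^\bullet)$.

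For the converse, start from a totally acyclic complex $C^\bullet$ of projective representations with $\Ker(C^0\rt C^1)\simeq (X\st{f}\rt Y)$, and write $C^n=(A^n\st{\binom{1}{0}}\rt A^n\oplus B^n)$ with $A^n,B^n\in\prj\La$. Pointwise acyclicity of $C^\bullet$ at the two vertices of $\mathbb{A}_2$ yields acyclicity of the $\La$-complexes $A^\bullet$ and $A^\bullet\oplus B^\bullet$ (the latter equipped with the upper-triangular differentials $\beta^n$ induced from $C^\bullet$), and these fit into a degreewise split short exact sequence of complexes $0\rt A^\bullet\rt A^\bullet\oplus B^\bullet\rt B^\bullet\rt 0$, whose long exact sequence in cohomology also gives acyclicity of $B^\bullet$. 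The map $f$ is the restriction of the split mono $\binom{1}{0}$ to $X$, so $f$ is a monomorphism, and a snake-lemma chase in the same diagram, using also the acyclicity of $A^\bullet$ one position to the right in order to kill the relevant connecting map, identifies $\Coker f$ with $\Ker(B^0\rt B^1)$. Applying the $\Hom$-identifications to $C^\bullet$ shows that $\Hom_\La(A^\bullet\oplus B^\bullet,Q)$ and $\Hom_\La(B^\bullet,Q)$ are acyclic for every $Q\in\prj\La$, and applying $\Hom_\La(-,Q)$ to the split short exact sequence of complexes then forces $\Hom_\La(A^\bullet,Q)$ to be acyclic too, so $A^\bullet$, $A^\bullet\oplus B^\bullet$ and $B^\bullet$ are all totally acyclic and $X,Y,\Coker f\in\rm{Gprj}\mbox{-}\La$. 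The main technical point is this snake-lemma identification of $\Coker f$ with $\Ker(B^0\rt B^1)$; once it and the two $\Hom$-identifications are correctly in place, everything else is a routine bookkeeping exercise with split short exact sequences of complexes.
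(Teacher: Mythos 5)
The paper does not actually prove this lemma: it is quoted verbatim from \cite[Theorem 3.5.1]{EHS} and \cite[Theorem 5.1]{LZ1}, so there is no in-paper argument to compare against. Your proof is correct and is essentially a self-contained reconstruction of the argument in those references: the two $\Hom$-identifications against $(Q\st{1}\rt Q)$ and $(0\rt Q)$ are exactly what converts total acyclicity of a complex of projective representations into total acyclicity of the three vertex/cokernel complexes $A^\bullet$, $A^\bullet\oplus B^\bullet$, $B^\bullet$, and the one genuinely delicate step --- that the connecting map $\Ker\beta^0_{22}\rt\Coker\alpha^0$ vanishes --- is correctly reduced to acyclicity of $A^\bullet$ in degree $1$ (the relation $\alpha^1\beta^0_{12}+\beta^1_{12}\beta^0_{22}=0$ from $\beta^1\beta^0=0$ puts $\beta^0_{12}(\Ker\beta^0_{22})$ inside $\Ker\alpha^1=\im\alpha^0$). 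The only ingredient you import rather than prove is closure of the Gorenstein projective class under extensions in the ``if'' direction; this is standard (\cite{Ho}), though one could avoid it by running the horseshoe lemma on your short exact sequence $0\rt(X\st{1}\rt X)\rt(X\st{f}\rt Y)\rt(0\rt\Coker f)\rt 0$ to build a totally acyclic complex for the middle term directly. I see no gap.
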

Throughout  the paper, we completely free use the identification between objects in $H(\La)$ and modules in $\mmod T_2(\La)$.
\section{An equivalence}\label{Equivalence}
let $\CA$ be an abelian category with enough projectives in this section. Following \cite{MT}, a subcategory $\mathcal{C}$ of $ \CA$ is called {\it quasi-resolving}
if it contains  the projective objects of $\CA$, closed under finite direct sums and closed under kernels of epimorphisms in $\mathcal{C}$. Moreover, a quasi-resolving subcategory $\mathcal{C}$ of $\CA$ is called {\it resolving} if it is closed under direct summands and  extensions. In this case, as mentioned in \cite[Proposition 2.11]{MT}, $\mmod \underline{\mathcal{C}}$ is an abelian category with enough projectives, although $\mmod \mathcal{C}$ might not  be always abelian. In this section we show that for a quasi-resolving subcategory $\mathcal{C}$ of $\CA$, the category $\mmod \underline{\mathcal{C}}$ is realized by the (additive) quotient category of a subcategory of the morphism category $H(\CA)$ modulo a relation generated by some objects.

Assume $\mathcal{C}$ is a full subcategory of an additive category $\mathcal{D}$. Denote by $[\mathcal{C}]$, the ideal of morphisms in $\mathcal{D}$ which factor through an object in $\mathcal{C}$. The quotient category $\mathcal{D}/\mathcal{C}$ has the same objects as $\mathcal{D}$ but $\rm{Hom}$-space of morphisms
$$\Hom_{\mathcal{D}/\mathcal{C}}(X, Y):=\Hom_{\mathcal{D}}(X,Y)/[\mathcal{C}](X,Y).$$

For a given subcategory $\mathcal{C}$ of an abelian category  $\CA$,  we assign the subcategory $\CS_{\mathcal{C}}(\CA)$ of $H(\CA)$ consisting of all  morphisms $(A \st{f} \rt B)$ satisfying: $(i)$ $f$ is a monomorphism; $(ii)$ $A$, $B$ and $\text{Cok} \ f$  belong to $\mathcal{C}.$

In the case that $\CA=\mmod \La$,  for $\CX \subseteq \mmod \La$ we show $\CS_{\CX}(\mmod \La)$  by $\mathcal{S}_{\CX}(\La).$

We define a functor $\Psi_{\mathcal{C}}:\CS_{\mathcal{C}}(\CA) \rt \mmod \underline{\mathcal{C}}$ with respect to a   subcategory $\mathcal{C}$ of $\CA$ as follows.

\begin{construction}\label{FirstCoonstr}

	Taking an object $(A \st{f} \rt B)$ of $\CS_{\mathcal{C}}(\CA)$, then we have the following short exact sequence
	$$0 \rt A \st{f} \rt  B \rt \text{Cok} \ f \rt 0$$ in $\CA, $ this in turn gives the following short exact sequence
	$$ (*) \ \ \ 0 \lrt (-, A) \st{(-, f )} \rt (-, B) \rt (-, \text{Cok} \ f) \rt F \rt 0$$
	in $\mmod \mathcal{C}$. In fact, $(*)$  corresponds to a projective resolution of $F$ in $\mmod \mathcal{C}.$ We define $\Psi_{\mathcal{C}}(A \st{f} \rt B):= F$. 
	
	For morphism: Let $\sigma=(\sigma_1, \sigma_2)$ be a morphism from $(A \st{f} \rt B)$ to $(A' \st{f'} \rt B')$, it gives the following commutative  diagram
	\[\xymatrix{0 \ar[r] & A \ar[r]^{f} \ar[d]^{\sigma_1} & B \ar[r] \ar[d]^{\sigma_2} & \text{Cok} \ f  \ar[r] \ar[d]^{\sigma_3} & 0 \\
		0 \ar[r] & A'  \ar[r]^{f'} & B' \ar[r] & \text{Cok} \ f'  \ar[r]  & 0.}\]
	 By applying the Yoneda functor, the above diagram gives  the following commutative  diagram
	
	\[\xymatrix{0 \ar[r] & (-, A) \ar[d]_{(-, \sigma_1)} \ar[r]^{(-, f)} & (-, B) \ar[d]^{(-, \sigma_2)} \ar[r] & (-, \text{Cok} \  f) \ar[d]^{(-, \sigma_3)} \ar[r] & F \ar[d]^{\overline{(-, \sigma_3)}} \ar[r] & 0 \\ 0  \ar[r] & (-, A')\ar[r]^{(-, f')} & (-, B')\ar[r] & (-,\text{Cok} \ f' )\ar[r] & F' \ar[r] & 0.}\]
	in $\mmod \mathcal{C}.$ Define $\Psi_{\mathcal{C}}(\sigma):= \overline{(-,\sigma_3)}$.
	
\end{construction}
Following \cite{RZ1},  an additive  functor $F:\CA  \rt \CB$ will be said to be objective provided any morphism $f: A \rt A'$ in $\CA $ with $F(f) = 0$ factors through an object $C$  that $F(C)=0.$ 

The following result  was first stated in the unpublished work \cite{H1} by the author.
\begin{theorem} \label{Thefirst}
	Let $\CA$ be an abelian category with enough projectives.
	Let $\mathcal{C}$ be a quasi-resolving  subcategory of $\CA$.  Consider the full subcategory $\mathcal{V}$ of $\CS_{\mathcal{C}}(\CA)$ formed by  all finite direct sums of   objects of the form  $(C \st{1}\rt C )$ and $ (0 \rt C)$, which $C$ runs through   of objects in $\mathcal{C}$. Then the functor $\Psi_{\mathcal{C}}$, defined in the above construction, induces the following equivalence of categories
	$$ \CS_{\mathcal{C}}(\CA)/ \mathcal{V} \simeq \mmod \underline{\mathcal{C}}.$$
	Moreover, if $\mathcal{C} \subseteq \mmod \La$ is of finite representation type, then the following conditions are equivalent.
	\begin{itemize}
		\item [$(1)$] The relative stable Auslander algebra $\rm{Aus}(\underline{\mathcal{C}})$ is of finite representation type;
		\item[$(2)$] The subcategory $\CS_{\mathcal{C}}(\La)$ of $\mmod T_2(\La)$ is of finite representation type.
	\end{itemize}
\end{theorem}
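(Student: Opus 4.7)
The plan is to establish that $\Psi_{\mathcal{C}}$ annihilates each object of $\mathcal{V}$, so it descends to a well-defined functor $\overline{\Psi_{\mathcal{C}}}:\CS_{\mathcal{C}}(\CA)/\mathcal{V}\to\mmod\underline{\mathcal{C}}$, and then to verify that $\overline{\Psi_{\mathcal{C}}}$ is an equivalence. I will reduce the latter to three properties of $\Psi_{\mathcal{C}}$ itself: density, fullness, and \emph{objectivity} with respect to $\mathcal{V}$, the last meaning that every morphism killed by $\Psi_{\mathcal{C}}$ factors through some object of $\mathcal{V}$. The annihilation on $\mathcal{V}$ is immediate from the defining 4-term exact sequence of Construction \ref{FirstCoonstr}: for $(C\st{1}\rt C)$ the cokernel is $0$ and hence $F=0$; for $(0\rt C)$ the sequence collapses to an identity $(-,C)\to(-,C)$, again giving $F=0$.

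For density, take $F\in\mmod\underline{\mathcal{C}}$ and view it via $\pi^{*}$ as a finitely presented functor in $\mmod\mathcal{C}$ vanishing on projectives. Choose a projective presentation $(-,Y)\st{(-,g)}\rt(-,Z)\rt F\rt 0$ with $Y,Z\in\mathcal{C}$, and pick an epimorphism $p:P\twoheadrightarrow Z$ from a projective $P$ of $\CA$ (hence of $\mathcal{C}$). Let $K$ be the kernel of the epi $(g,p):Y\oplus P\twoheadrightarrow Z$. The quasi-resolving hypothesis intervenes precisely here: it guarantees $K\in\mathcal{C}$, so $(K\hookrightarrow Y\oplus P)\in\CS_{\mathcal{C}}(\CA)$. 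Applying $\Psi_{\mathcal{C}}$ returns $F'=\Coker((-,Y\oplus P)\to(-,Z))$, and the identification $F'=F$ reduces to $\mathrm{im}((-,p))\subseteq\mathrm{im}((-,g))$, i.e.\ to $p$ itself factoring through $g$; this is exactly the vanishing $F(P)=0$.

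Fullness follows from the comparison theorem applied to the two length-two projective resolutions produced by $\Psi_{\mathcal{C}}$: any $\tau:F\to F'$ lifts to a chain map, and Yoneda converts the lift into a morphism $\sigma$ in $\CS_{\mathcal{C}}(\CA)$ with $\Psi_{\mathcal{C}}(\sigma)=\tau$. For objectivity, suppose $\sigma=(\sigma_1,\sigma_2)$ satisfies $\Psi_{\mathcal{C}}(\sigma)=0$; then $(-,\sigma_3)$ must factor through $(-,B')\to(-,\Coker f')$, yielding by Yoneda a morphism $\alpha:\Coker f\to B'$ with $\pi'\alpha=\sigma_3$, where $\pi':B'\to\Coker f'$ is the canonical map. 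A short diagram chase shows that $\sigma_2-\alpha\pi$ lands in $\ker\pi'=A'$, so there is $\beta:B\to A'$ with $\sigma_2=f'\beta+\alpha\pi$; the monomorphism $f'$ then forces $\sigma_1=\beta f$. The decomposition $\sigma=(\beta f,f'\beta)+(0,\alpha\pi)$ factors through $(A'\st{1}\rt A')$ (via $(\beta f,\beta)$ followed by $(1,f')$) and through $(0\rt\Coker f)$ (via $(0,\pi)$ followed by $(0,\alpha)$) respectively, hence $\sigma$ factors through $(A'\st{1}\rt A')\oplus(0\rt\Coker f)\in\mathcal{V}$.

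For the finite representation type equivalence, when $\mathcal{C}=\add X$ the evaluation at $X$ gives $\mmod\underline{\mathcal{C}}\simeq\mmod\rm{Aus}(\underline{\mathcal{C}})$, and $\mathcal{V}$ has exactly $2n$ indecomposable isoclasses, namely the $(X_i\st{1}\rt X_i)$ and the $(0\rt X_i)$, where $n$ is the number of indecomposable summands of $X$. The main equivalence plus standard Krull--Schmidt arguments show that an indecomposable of $\CS_{\mathcal{C}}(\La)$ becomes zero in the quotient iff it lies in $\mathcal{V}$; conversely, every indecomposable of $\mmod\rm{Aus}(\underline{\mathcal{C}})$ lifts, by density together with decomposing a preimage into indecomposables, to an indecomposable of $\CS_{\mathcal{C}}(\La)$ outside $\mathcal{V}$. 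Thus the indecomposables of $\CS_{\mathcal{C}}(\La)$ split as the disjoint union of those of $\mathcal{V}$ (always finite) and those of $\mmod\rm{Aus}(\underline{\mathcal{C}})$, so the two finiteness conditions coincide. I expect the main obstacle to be the density step: one must extract from an arbitrary projective presentation of $F$ a length-two resolution of the rigid form coming from a monomorphism $f$ in $\mathcal{C}$ with $\Coker f\in\mathcal{C}$, and it is the dovetailing of the quasi-resolving hypothesis (which confines $K$ inside $\mathcal{C}$) with the projective-vanishing of $F$ (which prevents the auxiliary summand $P$ from perturbing the cokernel) that makes the construction succeed.
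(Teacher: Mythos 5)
Your proof is correct and follows essentially the same route as the paper: establish that $\Psi_{\mathcal{C}}$ is dense, full, and objective with respect to $\mathcal{V}$, and then pass to the additive quotient; your explicit diagram chase for objectivity (producing $\alpha$ and $\beta$ via Yoneda) is just the unpacked form of the paper's appeal to null-homotopy of the lifted chain map, and your epimorphism-replacement step in the density argument makes explicit what the paper only asserts. The counting of indecomposables for the finite-representation-type equivalence likewise matches the paper's argument.
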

  
\begin{proof}
	The functor $\Psi_{\mathcal{C}}$ is dense.  Let $F \in \mmod \underline{\mathcal{C}}$. By identifying $F$ as an object in $\mmod \mathcal{C}$ we have a projective presentation $(-, B) \st{(-, g)} \rt (-, D) \rt F \rt 0$ such that $g$ is an epimorphism.  The assumption $\mathcal{C}$ being closed under kernels of epimorphisms implies $\text{Ker} \ g$ belongs to $\mathcal{C}.$ Hence we get the projective resolution $0 \rt (-, \text{Ker} \ g) \rt (-, B) \rt (-, D) \rt F \rt 0,$ in $\mmod \mathcal{C}$ and consequently $\Psi_{\mathcal{C}}(\text{Ker} \ g \rt B)=F.$ Any morphism between  two functors $F=\Psi_{\mathcal{C}}(A\st{f}\rt B)$ and $\Psi_{\mathcal{C}}(C\st{g}\rt D)$ in $\mmod \underline{\mathcal{C}}$ can be   considered  as a morphism in $\mmod \mathcal{C}$, so it  can be  lifted to the  corresponding projective resolutions of $F$ and $G$ in $\mmod \mathcal{C}$ induced by $(A\st{f}\rt B)$ and $(C\st{g}\rt D)$. Then by using Yoneda's Lemma we can obtain a morphism in $\CS_{\mathcal{C}}(\CA)$ to prove fullness. If $\Psi_{\mathcal{C}}(A \st{f} \rt B)=0$, then by the definition
	 we have the following exact sequence in $\mmod \mathcal{C}$
	$$0 \rt (-, A) \st{(-, f)} \lrt (-, B)  \st{(-, g)} \lrt (-, \text{Cok} \ f) \rt 0.$$
	Now by putting $\text{Cok} \ f$, as it belongs to $\mathcal{C},$ in  the above short exact sequence   we get
	$$0 \rt A \st{f} \rt B\st{g} \rt\text{Cok} \ f \rt 0$$
	is split. This gives us $(A \st{f} \rt B)$ being isomorphic to the  object $(A \rt \text{Im} \ f) \oplus (0 \rt \text{Ker} \ g)$ in $H(\CA)$, and so $f$ belongs to $\mathcal{V}$. Assume that $\Psi_{\mathcal{C}}(\sigma)=0$, for $\sigma=(\sigma_1, \sigma_2): (A \st{f} \rt B) \rt (A' \st{f'} \rt B')$ in $\CS_{\mathcal{C}}(\CA)$. Therefore, we have
	
	\[\xymatrix{0 \ar[r] & (-, A) \ar[d]_{(-, \sigma_1)} \ar[r]^{(-, f)} & (-, B) \ar[d]^{(-, \sigma_2)} \ar[r] & (-, \text{Cok} \ f) \ar[d]^{(-, \sigma_3)} \ar[r] & F \ar[d]^{0} \ar[r] & 0 \\ 0  \ar[r] & (-, A')\ar[r]^{(-, f')} & (-, B')\ar[r] & (-,\text{Cok} \ f' )\ar[r] & F' \ar[r] & 0.}\]
	
	Since the first (resp. second) row of the above diagram is a projective resolution for $F$ (resp. $F'$) in $\mmod \mathcal{C}$. Then by considering the following commutative diagram
	\[\xymatrix{\cdots \ar[r] & (-, A) \ar[d]_{(-, \sigma_1)} \ar[r]^{(-, f)} & (-, B) \ar[d]^{(-, \sigma_2)} \ar[r] & (-, \text{Cok} \ f) \ar[d]^{(-, \sigma_3)} \ar[r] &  \cdots \\ \cdots  \ar[r] & (-, A')\ar[r]^{(-, f')} & (-, B')\ar[r] & (-,\text{Cok} \ f' )\ar[r] &  \cdots,}\]
	as a chain map in $\mathbb{C}^{\rm{b}}(\mmod \mathcal{C})$, the category of bounded complexes on $\mmod \mathcal{C},$ the chain map should be null-homotopic. Hence, the above chain map should  be factored through a contractible complex in $\mathbb{C}^{\rm{b}}(\mmod \mathcal{C})$ as follows:

	$$\xymatrix{    & 0 \ar[r] & (-, A)   \ar@/^1.25pc/@{.>}[dd]^<<<<<{(-, \sigma_1)} \ar[d]  \ar[r]  & (-, B)  \ar@/^1pc/@{.>}[dd]^<<<<<<<<{(-, \sigma_2)} \ar[d]  \ar[r]  &  (-, \text{Cok} \ f)  \ar@/^1.2pc/@{.>}[dd]^<<<<<{(-, \sigma_3)} \ar[d]  \ar[r]  & 0 \ar@/^1.2pc/@{.>}[dd] \ar[d] &   \\
		&0 \ar[r] & (-, A') \ar[r] \ar[d] & (-, A'\oplus B') \ar[r] \ar[d]  &   (-, B'\oplus\text{Cok} \ f' )  \ar[d]  \ar[r]  & (-, \text{Cok}\ f')
		 \ar[d]  \\
		& 0 \ar[r] & (-, A')  \ar[r]  & (-, B')  \ar[r]  &(-,\text{Cok} \ f') \ar[r] & 0. }$$
	
	This factorization as the above  gives us a factorization of morphism $\sigma$ through the direct sum of $(A' \st{1} \rt A')$  and $(0 \rt B')$ in  $\CS_{\mathcal{C}}(\CA).$ So $\Psi_{\mathcal{C}}$ is an objective functor. 	Summing up,  $\Psi_{\mathcal{C}} $ is  full, dense and objective. Hence by \cite[Appendix]{RZ1}, $\Psi_{\mathcal{C}} $ induces an equivalence between $\CS_{\mathcal{C}}(\CA)/ \mathcal{V}$ and $\mmod \underline{\mathcal{C}}$. So we are done.\\
	Assume $\mathcal{C}$ is a subcategory of finite representation type of $\mmod \La.$	By the first part we have the equivalence $\CS_{\mathcal{C}}(\La)/ \CV\simeq \mmod \underline{\mathcal{C}}.$ Due to the equivalence we observe there is a bijection between indecomposable functors  in $\mmod \underline{\mathcal{C}}$ and the indecomposable objects in $\CS_{\mathcal{C}}(\La)$ except a finite number. Hence, we can deduce that $\mmod \underline{\mathcal{C}}$ is of finite representation type if and only if so is $\CS_{\mathcal{C}}(\La)$. We know by subsection \ref{simple}, $\mmod \underline{\mathcal{C}}\simeq \mmod  \rm{Aus}(\underline{\mathcal{C}})$. The latter equivalence completes the second part of the statement. 
\end{proof}
In fact, the above theorem is a relative version of  the equivalences given in \cite{RZ1} and \cite{E}.
\begin{remark}
	As we were informed later by an anonymous  referee the  equivalence given in the above theorem is a  consequence of \cite[Proposition 3.8 and Corollary 3.9 (i)]{B} in a completely different approach. But what is important more for us  here not really the equivalence itself, as will be shown in the next sections, to  see that how the functor $\Psi_{\mathcal{C}}$  helps to transfer the    representation theory  from  $\CS_{\mathcal{C}}(\La)$  to $\mmod \underline{\mathcal{C}}$.
\end{remark}

 By using  Lemma \ref{Gorenproj charecte}, we observe $\CS_{\rm{Gprj}\mbox{-}\La}(\La)\simeq \rm{Gprj}\mbox{-}T_2(\La)$. Thus by Theorem \ref{Thefirst}, we can say that for a $\rm{CM}$-finite algebra $\La$: $T_2(\La)$ is $\rm{CM}$-finite if and only if the stable Cohen-Macaulay Auslander algebra of $\La$  is representation-finite. In particular, if assume $\La$ is a self-injective of finite representation
   type, then $T_2(\La)$ is $\rm{CM}$-finite if and only if the stable Auslander  algebra of $\La$ is representation-finite.   
   
   Let us give an easy application by the above observation in the following example.
   \begin{example}\label{Example 3.5} Let $A=k\CQ/I$ be a quadratic monomial algebra, i.e. the ideal $I$ is generated by paths of length two. By \cite[Theorem 5.7]{CSZ}, $\underline{\rm{Gprj} }\mbox{-}A \simeq \CT_1 \times \cdots \times \CT_n $ such that the underlying categories of triangulated categories  $\CT_i$ are equivalent to semisimple abelian categories $\mmod k^{d_i}$ for some natural numbers $d_i.$ Hence $ \mmod \underline{\rm{Gprj} }\mbox{-}A $ is a semisimple abelian category and consequently $\rm{Aus}(\underline{\rm{Gprj}}\mbox{-}A)$ is  a semisimple Artin algebra. Thus Theorem \ref{simple}
   	implies $T_2(A)$ is $\rm{CM}$-finite.    	
   \end{example}

\section {Syzygies}\label{Section 4}
Throughout this section let $\CX \subseteq \mmod \La$ be a  contravariantly finite and quasi-resolving subcategory of $\mmod \La.$ \\
We begin this section by giving an explicit description of the syzygies (up to projective summands) of any functor $F$ in $\mmod \underline{\CX}$ via their projective resolutions in $\mmod \CX$. This description helps us to give a classification of the non-projective Gorenstein projective functors in $\mmod \underline{\CX}$ via their minimal  projective resolutions in $\mmod \CX.$ To have such a classification we need some more conditions on $\CX$. By such a classification, as our main aim of this section,  we will define a fully faithful exact (and almost dense, see Definition \ref{almsotdense}, when $\CX$ is of finite representation type) functor from $\mmod \underline{\CY}$ to $\rm{Gprj}\mbox{-}\underline{\CX}$, where $\CY=\CX\cap\rm{Gprj}\mbox{-}\La$.

Given a $\La$-module $M, $ denote the kernel of the projective cover $P_M \rt M$ by $\Omega_{\La}(M)$. The module  $\Omega_{\La}(M)$ is called the first syzygy of $M$. We let $\Omega^0_{\La}(M)=M$ and then inductively for each $i \geq 1$, set $\Omega^i_{\La}(M)=\Omega_{\La}(\Omega^{i-1}_{\La}(M)).$ Similarly, when $\mmod \mathcal{C}$ has projective covers, one can define the $n$-the syzygy of a functor $F$ in $\mmod \mathcal{C}$, and denoted by $\Omega^n_{\mathcal{C}}(F)$. The categories $\CX$ and $\underline{\CX}$ both are  varieties of annul, in the scene of \cite{Au1}. On the other hand, since for each $X$, resp. $\underline{X}$, in $\CX$, resp. $\underline{\CX}$, $\rm{End}_{\La}(X)$, resp. $\underline{\rm{End}}_{\La}(\underline{X})$,  is clearly Artin algebra, then by \cite[Corollary 4.13]{Au1}, $\mmod \CX$ and $\mmod \underline{\CX}$ both have projective covers. In view of \cite[Corollary 4.4]{K}, they are Krull-Schmidt as well. We recall that an  additive category $\mathcal{C}$ is called Krull-Schmidt  category if every object decomposes into a finite direct sum of objects having local endomorphism rings. 

For $n \geqslant 0$,  let $\Omega^n(\CX)$ denote the subcategory of $\mmod \La$ consisting of all modules $M$ such that $M\simeq Q \oplus N$, where $Q \in \rm{prj}\mbox{-} \La$ and $N=\Omega^n_{\La}(X)$ for some $X$ in $\CX.$\\

 \begin{proposition}\label{Thefirst syzygy}
  Let $F \in \mmod \underline{\CX}$ and $0 \rt (-, A)\rt (-, B)\rt (-, C)\rt F\rt 0$ be a  projective resolution of $F$ in $\mmod \CX.$ Then,  there is a short exact sequence  $0 \rt G\rt  (-, \underline{C})\rt F \rt 0$, in $\mmod \underline{\CX}$, such that $G$ has the following projective resolution 
 	$0 \rt (-, \Omega_{\La}(C))\rt (-, A\oplus P_C)\rt (-, B)\rt G \rt0$ 	
 	in $\mmod \CX,$ where $P_C$ is the projective cover of $C$ in $\mmod \La.$ 
 \end{proposition}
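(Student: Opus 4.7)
The plan is to pass from the given length-two projective resolution of $F$ in $\mmod \CX$ to the required projective resolution of $G$ by incorporating the projective cover of $C$ in $\mmod \La$, and then to identify $G$ via a snake-lemma argument.

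Evaluating the given resolution at the projective object $\La \in \CX$, and using that $F(\La)=0$ since $F\in \mmod \underline{\CX}$ vanishes on projectives, collapses it to a short exact sequence $0 \to A \to B \to C \to 0$ in $\mmod \La$. Let $P_C \twoheadrightarrow C$ be the projective cover with kernel $\Omega_{\La}(C)$; since $\CX$ is quasi-resolving, $\Omega_{\La}(C) \in \CX$. Applying Yoneda to $0 \to \Omega_{\La}(C) \to P_C \to C \to 0$ produces an exact complex $0 \to (-,\Omega_{\La}(C)) \to (-, P_C) \to (-, C) \to H \to 0$ in $\mmod \CX$. The key observation is that any morphism $X \to C$ factoring through a projective module already factors through $P_C$ (lift the projective factor to $P_C$), so the image of $(-, P_C) \to (-, C)$ at $X$ equals $\CP(X, C)$; consequently $H$ coincides with the image of $(-,\underline{C}) \in \mmod \underline{\CX}$ under the fully faithful embedding $\pi^* : \mmod \underline{\CX} \hookrightarrow \mmod \CX$.

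Because $F$ vanishes on projectives, Yoneda forces the composite $(-, P_C) \to (-, C) \to F$ to be zero, so $(-, C) \to F$ descends to an epimorphism $(-,\underline{C}) \twoheadrightarrow F$ whose kernel is the promised $G$. Write $L = \im((-, P_C) \to (-, C))$ and $K = \im((-, B) \to (-, C))$. The snake lemma applied to the morphism of short exact sequences $0 \to L \to (-, C) \to \pi^*((-,\underline{C})) \to 0$ and $0 \to K \to (-, C) \to F \to 0$, with vertical maps given by the inclusion $L \hookrightarrow K$, the identity, and the canonical quotient, identifies $G \cong K/L$.

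To construct the resolution, lift the identity on $C$ along $B \to C$ via the projectivity of $P_C$ to a map $\tilde{p}: P_C \to B$; restricting to kernels yields $\alpha: \Omega_{\La}(C) \to A$. The pair $(\alpha, \iota): \Omega_{\La}(C) \to A \oplus P_C$ (with $\iota: \Omega_{\La}(C) \hookrightarrow P_C$) and $(\iota_A, -\tilde{p}): A \oplus P_C \to B$ (with $\iota_A: A \hookrightarrow B$) assemble, via Yoneda, into a chain map from the length-one resolution $[(-,\Omega_{\La}(C)) \to (-, P_C)]$ of $L$ to the length-one resolution $[(-, A) \to (-, B)]$ of $K$ that realizes $L \hookrightarrow K$ on cohomology. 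The long exact cohomology sequence of its mapping cone, combined with $L \hookrightarrow K$ being monic with cokernel $G$, then yields exactness of
$$0 \to (-, \Omega_{\La}(C)) \to (-, A \oplus P_C) \to (-, B) \to G \to 0,$$
which is the claimed projective resolution in $\mmod \CX$. The principal bookkeeping obstacle is the identification in the second paragraph: the cokernel of $(-, P_C) \to (-, C)$ must be shown to equal $\pi^*((-,\underline{C}))$ rather than a smaller quotient. This rests on the elementary lifting property of projective covers, and without it the snake-lemma identification $G \cong K/L$, and hence the entire construction, breaks down.
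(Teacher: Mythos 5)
Your argument is correct, and it arrives at the paper's resolution by a genuinely different, and somewhat leaner, route. The paper first forms the pullback $M$ of $(-,C)\to F \leftarrow (-,\underline{C})$, applies the horseshoe lemma to $0\to K_1\to M\to (-,\underline{C})\to 0$ using the resolutions $0\to (-,A)\to (-,B)\to K_1\to 0$ and $0\to (-,\Omega_{\La}(C))\to (-,P_C)\to (-,C)\to (-,\underline{C})\to 0$, and then extracts the answer with two further pullback squares and a gluing step. You instead work entirely with the two subobjects $L\subseteq K$ of $(-,C)$: you identify $(-,\underline{C})$ as the cokernel of $(-,P_C)\to (-,C)$ --- and the point you isolate as the main obstacle, namely that the image of $(-,P_C)\to(-,C)$ at $X$ is exactly $\CP(X,C)$ because any projective through which a map factors can be lifted along $P_C\twoheadrightarrow C$, is indeed the crux, and is exactly what the paper uses tacitly when it writes down the projective resolution of $(-,\underline{C})$ --- then deduce $G\cong K/L$ from the snake lemma, and produce the four-term resolution as the mapping cone of a lift of $L\hookrightarrow K$ to the two-term resolutions of $L$ and $K$. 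The two constructions yield literally the same complex (the horseshoe middle term $(-,A\oplus P_C)$ is your cone term), but yours dispenses with the auxiliary objects $M$, $H_1$, $D_1$ and replaces three pullback diagrams by one snake lemma plus one cone. The only small point left implicit on your side (and on the paper's) is that $G$ vanishes on projectives, which is immediate since it is a subfunctor of $(-,\underline{C})$, so the short exact sequence $0\to G\to (-,\underline{C})\to F\to 0$ does live in $\mmod \underline{\CX}$.
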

\begin{proof} 
	Since $F$ vanishes on projective modules,  we have the  exact sequence $(-, \underline{B})\rt (-, \underline{C}) \rt F \rt 0$ in $\mmod \underline{\CX}$. Letting $G$ be the kernel of  the epimorphism $(-, \underline{C})\rt F \rt 0$, we have the short exact sequence $0 \rt G \rt (-, \underline{C})\rt F \rt 0$ in $\mmod \underline{\CX}$.
Consider the following pull-back diagram in $\mmod \CX$:
$$\xymatrix{& & 0 \ar[d] & 0 \ar[d]& &\\
	& & K_1 \ar@{=}[r] \ar[d] & K_1 \ar[d]& &\\
	0 \ar[r] & G\ar@{=}[d] \ar[r] & M
	\ar[d] \ar[r] &(-, C) \ar[d] \ar[r] & 0\\
	0 \ar[r] & G \ar[r] & (-, \underline{C}) \ar[r] \ar[d] & F \ar[d] \ar[r] & 0 &\\
	& & 0 & 0 & & }
$$ 
where the rightmost column obtained from the projective resolution in the assertion.

By using the horseshoe lemma, we get the following commutative diagram with exact columns and rows and the middle row splitting:
$$\xymatrix{& 0 \ar[d] & 0 \ar@{-->}[d] & 0 \ar[d]& &\\
	0 \ar@{-->}[r] & (-, A) \ar[d] \ar@{-->}[r] & D_1 \ar@{-->}[d]
	\ar@{-->}[r] & H_1 \ar[d] \ar@{-->}[r] & 0\\
	0 \ar[r] & (-, B) \ar[d] \ar[r] & (-, B)\bigoplus (-, C)
	\ar@{-->}[d] \ar[r] & (-, C) \ar[d] \ar[r] & 0\\
	0 \ar[r] & K_1 \ar[d] \ar[r] & M
	\ar@{-->}[d] \ar[r] & (-, \underline{C}) \ar[d] \ar[r] & 0\\
	& 0  & 0  & 0 & }
$$

Note that in the above digram $H_1$ is obtained by the  projective resolution $0 \rt (-, \Omega_{\La}(C))\rt (-, P_C)\rt (-, C)\rt (-, \underline{C})\rt 0$ in $\mmod \CX$, and the leftmost short exact sequence induced by the projective resolution of $F$ in the assertion.
Also, we have the following pull-back diagram in $\mmod \CX$:
$$\xymatrix{& 0 \ar[d] & 0 \ar[d] & &\\
	&  D_1 \ar@{=}[r] \ar[d]& D_1 \ar[d] & & \\
	0 \ar[r] & (-, B) \ar[r] \ar[d] & (-, B)\bigoplus (-, C) \ar[r] \ar[d] & (-, C) \ar[r] \ar@{=}[d] & 0\\
	0 \ar[r] & G \ar[r] \ar[d] & M \ar[r] \ar[d] & (-, C)\ar[r] & 0\\
	& 0 & 0 & &}$$
Finally, we have the following pull-back diagram in $\mmod \CX$:
$$\xymatrix{&   & 0 \ar[d] & 0 \ar[d]& &\\
 &   &  (-, \Omega_{\La}(C)) \ar[d]
	\ar@{=}[r] & (-, \Omega_{\La}(C)) \ar[d] \ar[r] & 0\\
	0 \ar[r] &(-, A) \ar@{=}[d] \ar[r] &(-, A)\bigoplus(-, P_C)
	\ar[d] \ar[r] & (-, P_C) \ar[d] \ar[r] & 0\\
	0 \ar[r] & (-, A) \ar[d] \ar[r] & D_1
	\ar[d] \ar[r] & H_1 \ar[d] \ar[r] & 0\\
	& 0  & 0  & 0 & }
$$
By gluing the obtained short exact sequences $0 \rt D_1 \rt (-, B)\rt G \rt 0$ and $0 \rt (-, \Omega_{\La}(C))\rt (-, A\oplus P_C)\rt D_1 \rt 0$ from the last two above diagrams, we get the desired projective resolution in the statement.
\end{proof}

By iterating use of the above proposition we get the following corollary.

\begin{corollary}\label{N-the syzygy}
	 Let $F \in \mmod \underline{\CX}$ and $0 \rt (-, A)\rt (-, B)\rt (-, C)\rt F\rt 0$ be a  projective resolution of $F$ in $\mmod \CX.$ Then,  for each  $n >0$, we have the following situations.
		\begin{itemize}
			\item [$(1)$] If $n=3k$, then $\Omega^n_{\underline{\CX}}(F)\simeq G$ in the stable category $\underline{\rm{mod}}\mbox{-}\underline{\CX}$, where $G$ is settled in the following short exact sequence
			$$0 \rt (-, \Omega^k_{\La}(A))\rt (-, \Omega^k_{\La}(B)\oplus Q )\rt (-, \Omega_{\La}^k(C)\oplus P)\rt G \rt 0,$$ for some projective modules $P$ and $Q$ in $\prj \La,$
			 in $\mmod \CX.$
		\item[$(2)$]  If $n=3k+1$, then $\Omega^n_{\underline{\CX}}(F)\simeq G$ in the stable category $\underline{\rm{mod}}\mbox{-}\underline{\CX}$, where $G$ is settled in the following short exact sequence
		$$0 \rt (-, \Omega^{k+1}_{\La}(C))\rt (-, \Omega^k_{\La}(A)\oplus Q )\rt (-, \Omega_{\La}^k(B)\oplus P)\rt G \rt 0,$$ for some projective modules $P$ and $Q$ in $\prj \La,$
		in $\mmod \CX.$
		\item [$(3)$]If $n=3k+2$, then $\Omega^n_{\underline{\CX}}(F)\simeq G$ in the stable category $\underline{\rm{mod}}\mbox{-}\underline{\CX}$, where $G$ is settled in the following short exact sequence
		$$0 \rt (-, \Omega^{k+1}_{\La}(B))\rt (-, \Omega^{k+1}_{\La}(C)\oplus Q )\rt (-, \Omega_{\La}^k(A)\oplus P)\rt G \rt 0,$$ for some projective modules $P$ and $Q$ in $\prj \La,$
		in $\mmod \CX.$  	
		\end{itemize}
\end{corollary}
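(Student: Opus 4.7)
The plan is to induct on $n$, using Proposition \ref{Thefirst syzygy} as the single step that advances the syzygy index by one. Convenient bookkeeping is to encode a projective resolution $0 \to (-, X) \to (-, Y) \to (-, Z) \to F \to 0$ of a functor $F \in \mmod \underline{\CX}$ in $\mmod \CX$ by the triple $(X, Y, Z)$, and to rephrase Proposition \ref{Thefirst syzygy} as the assertion that $\Omega_{\underline{\CX}}(F)$ (which is defined up to projective summands in $\mmod \underline{\CX}$, i.e., in $\underline{\rm{mod}}\mbox{-}\underline{\CX}$) admits a resolution in $\mmod \CX$ with triple $(\Omega_{\La}(Z),\, X \oplus P_Z,\, Y)$, where $P_Z$ is the projective cover of $Z$ in $\mmod \La$.

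The base cases $n = 1, 2, 3$ will be obtained by applying this one-step operation to the starting triple $(A, B, C)$ of $F$: one application produces $(\Omega_\La(C),\, A \oplus P_C,\, B)$, matching (2) with $k = 0$; a second yields $(\Omega_\La(B),\, \Omega_\La(C) \oplus P_B,\, A \oplus P_C)$, matching (3) with $k = 0$; a third produces $(\Omega_\La(A \oplus P_C),\, \Omega_\La(B) \oplus P_{A \oplus P_C},\, \Omega_\La(C) \oplus P_B)$, which via the identities $\Omega_\La(P_C) = 0$ and $P_{A \oplus P_C} = P_A \oplus P_C$ reduces to $(\Omega_\La(A),\, \Omega_\La(B) \oplus P_A \oplus P_C,\, \Omega_\La(C) \oplus P_B)$, giving (1) with $k = 1$. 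For the inductive step I would assume (1)--(3) hold at the $n = 3k,\, 3k+1,\, 3k+2$ levels and apply one more syzygy to the resolution given by (3) at level $k$ to reach (1) at level $k+1$; the transitions (1) $\Rightarrow$ (2) and (2) $\Rightarrow$ (3) at level $k+1$ are analogous, each being a single invocation of the one-step operation combined with $\Omega_\La(M \oplus P) = \Omega_\La(M)$ and $P_{M \oplus P} = P_M \oplus P$ for $P \in \prj \La$.

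The main (and essentially the only) obstacle is careful bookkeeping of the projective summands that accumulate at each application of the one-step operation. Since cases (1)--(3) already allow the generic slots $P, Q \in \prj \La$ to absorb arbitrary projective modules, no tight control is needed: every projective summand produced by the operation either vanishes under $\Omega_\La$ or can be added to the generic $P$ or $Q$. With this observation the induction proceeds routinely, and the resulting isomorphism $\Omega^n_{\underline{\CX}}(F) \simeq G$ holds in $\underline{\rm{mod}}\mbox{-}\underline{\CX}$ because projective summands $(-, P)$ with $P \in \prj \La$ become zero in $\mmod \underline{\CX}$, so the syzygy in $\mmod \underline{\CX}$ is insensitive to them.
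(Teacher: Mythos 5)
Your proposal is correct and is exactly the argument the paper intends: the paper derives this corollary simply ``by iterating use of'' Proposition \ref{Thefirst syzygy}, and your triple-bookkeeping $(A,B,C)\mapsto(\Omega_{\La}(C),\,A\oplus P_C,\,B)$ together with the identities $\Omega_{\La}(M\oplus P)=\Omega_{\La}(M)$ and $P_{M\oplus P}=P_M\oplus P$ carries out that iteration faithfully, with the accumulated projectives correctly absorbed into the generic summands $P$ and $Q$.
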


\subsection{Gorenstein projective functors}
 As an application, we shall give a characterization of non-projective  Gorenstein projective indecomposable functors in $\mmod \underline{\CX}$ via their projective resolution in $\mmod \CX.$
\begin{proposition}\label{Gorenstein projective object in the stable catgeory via projective resolution}
	  Assume that there is $n\geqslant 0$ such that $\Omega^n(\CX)$ is contained in $\rm{Gprj}\mbox{-}\La$. If $F $ is a  non-projective Gorenstein projective indecomposable object in $\mmod \underline{\CX}$, then there is a projective resolution of $F$ in the following form
in $\mmod \CX$	$$0 \rt (-, A)\rt (-, B)\rt (-, C)\rt F\rt 0$$ such that 
	all the modules $A, B$ and $C$ are Gorenstein projective $\La$-modules.
\end{proposition}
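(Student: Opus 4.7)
The plan is to apply Corollary \ref{N-the syzygy} to a cosyzygy of $F$ taken far enough back that all relevant syzygies in $\mmod \La$ land in $\rm{Gprj}\mbox{-}\La$, and then extract a resolution of $F$ itself by a Krull-Schmidt argument.

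Since $F$ is Gorenstein projective in $\mmod \underline{\CX}$, I would pick $k \geq n$ and, using dimension-shift along a complete projective resolution witnessing the Gorenstein projectivity of $F$, choose $H \in \mmod \underline{\CX}$ with $F \simeq \Omega^{3k}_{\underline{\CX}}(H)$ in the stable category $\underline{\rm{mod}}\mbox{-}\underline{\CX}$. By the density part of Theorem \ref{Thefirst}, $H$ admits a length-three projective resolution $0 \rt (-, A_0) \rt (-, B_0) \rt (-, C_0) \rt H \rt 0$ in $\mmod \CX$ with $A_0, B_0, C_0 \in \CX$. Feeding this into Corollary \ref{N-the syzygy}(1) produces $G \in \mmod \underline{\CX}$ with $G \simeq F$ in $\underline{\rm{mod}}\mbox{-}\underline{\CX}$ and a projective resolution
$$ 0 \rt (-, \Omega^k_{\La}(A_0)) \rt (-, \Omega^k_{\La}(B_0) \oplus Q) \rt (-, \Omega^k_{\La}(C_0) \oplus P) \rt G \rt 0 $$
in $\mmod \CX$. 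Because $k \geq n$ and Gorenstein projectives are closed under further syzygies, each of $\Omega^k_{\La}(A_0), \Omega^k_{\La}(B_0), \Omega^k_{\La}(C_0)$ belongs to $\rm{Gprj}\mbox{-}\La$, and $P, Q \in \rm{prj}\mbox{-}\La$ are trivially Gorenstein projective; so all three left-hand slots in this resolution of $G$ are Gorenstein projective.

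The main obstacle is passing from $G$ to $F$ itself, since the relation $G \simeq F$ lives only in the stable category: $G \oplus R_1 \simeq F \oplus R_2$ in $\mmod \underline{\CX}$ for some projectives $R_i$ there. Because $\mmod \underline{\CX}$ is Krull-Schmidt and $F$ is indecomposable non-projective, $F$ must appear as a summand of $G$, so $G \simeq F \oplus N$ in $\mmod \underline{\CX}$, where $N$ is a finite direct sum of representables $(-, \underline{X_i})$ with $X_i \in \CX$. Via the fully faithful embedding $\mmod \underline{\CX} \hookrightarrow \mmod \CX$, this decomposition persists in $\mmod \CX$. Taking minimal projective resolutions in $\mmod \CX$ of $F$ and of $N$, their direct sum is a minimal projective resolution of $G$. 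Since any two projective resolutions of $G$ of the same length differ only by the addition of trivial complexes of the form $0 \rt (-, T) \st{1}\rt (-, T) \rt 0$ (placed between positions $1,2$ or $0,1$), the resolution of $G$ produced above decomposes as the minimal resolution of $F$, the minimal resolution of $N$, and such trivial pieces. Hence each of the three terms in the minimal resolution of $F$ is a direct summand of one of $\Omega^k_{\La}(A_0)$, $\Omega^k_{\La}(B_0)\oplus Q$, or $\Omega^k_{\La}(C_0)\oplus P$, and since $\rm{Gprj}\mbox{-}\La$ is closed under direct summands, the three terms $A, B, C$ in this minimal resolution of $F$ all lie in $\rm{Gprj}\mbox{-}\La$, as required.
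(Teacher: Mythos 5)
Your proposal is correct and follows essentially the same route as the paper: realize $F$ as a high syzygy $\Omega^{3k}_{\underline{\CX}}(H)$, apply Corollary \ref{N-the syzygy} to get a stably isomorphic $G$ whose resolution has Gorenstein projective terms, and use the Krull--Schmidt property of $\mmod \underline{\CX}$ to exhibit $F$ as a direct summand of $G$ and pass the Gorenstein projectivity to the terms of a resolution of $F$. Your final step (decomposing the resolution of $G$ into the minimal resolutions of $F$ and $N$ plus contractible pieces) just spells out what the paper states more tersely.
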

\begin{proof}
	 By definition,  $F$ is  isomorphic to $3n$-syzygy of some functor $G$ in $\mmod \underline{\CX}$  in the stable category $\underline{\rm{mod}}\mbox{-}\underline{\CX}$. Consider   a projective resolution of $G$ in $\mmod \CX$ as the  following
	$$0 \rt (-, M) \rt (-, N)\rt (-, L)\rt G\rt 0.$$
By Corollary \ref{N-the syzygy}, the $3n$-th syzygy $\Omega^{3n}_{\underline{\CX}}(G)\simeq G'$ in $\underline{\rm{mod}}\mbox{-}\underline{\CX}$, such that $G'$ has the following projective resolution 
	$$0 \rt (-, \Omega^n_{\La}(M))\rt (-, \Omega^n_{\La}(N)\oplus Q )\rt (-, \Omega_{\La}^n(L)\oplus P)\rt G' \rt 0$$
in $\mmod \CX,$	for some projective modules $P$ and $Q$ in $\rm{prj}\mbox{-}\La.$ By our assumption $\Omega^n_{\La}(M), \Omega^n_{\La}(N)$ and $\Omega^n_{\La}(L)$ lie in $\rm{Gprj}\mbox{-}\La$. We observe that $G'\simeq F$ in $\underline{\rm{mod}}\mbox{-}\underline{\CX}$. So there are $X$ and $Y$ in $\CX$ such that $ G'\oplus (-, \underline{X})\simeq F \oplus (-, \underline{Y})$ in $\mmod\underline{\CX}$.  But since $\mmod \underline{\CX}$ is a Krull-Schmidt category and $F$ is a  non-projective indecomposable object, $F$ has to be isomorphic to  a direct summand of $G'$. As we have seen in the above $G'$ has a projective resolution in $\mmod \CX$ such that whose  terms are presented by the Gorenstein projective modules, clearly any direct summand of $G'$ is so. Thus $F$ has the desired projective resolution.
\end{proof}

 In the sequel, we shall investigate the converse of Proposition \ref{Gorenstein projective object in the stable catgeory via projective resolution}. We need the following lemma for our investigation.

Let $\mathcal{C}$ be an additive category such that $\mmod \mathcal{C}$ is abelian. For any $F$ and $G$ in $\mmod \mathcal{C}$ and $i>0$, we mean by $\Ext^i_{\mathcal{C}}(F, G)$, the $i$-th extension group  of $F$ by $G$.

\begin{lemma}\label{VAnishing Gorenstein left approximation}
	  If $F $ in $\mmod \underline{\CX}$ has a projective resolution in $\mmod \CX$ as the following 
	$$0 \rt (-, A)\rt (-, B)\rt (-, C)\rt F \rt 0$$
	with Gorenstein projective modules $A, B, C$ in $\mmod \La$, then $\Ext^i_{\underline{\CX}}(F, (-, \underline{X}))=0$ for any $X \in \CX$ and $i>0 $.
\end{lemma}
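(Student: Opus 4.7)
The approach is to reduce $\Ext^i_{\underline{\CX}}(F, (-, \underline{X}))$ for all $i \geq 1$ to the case $i=1$ by a syzygy argument, and then dispatch the base case through a direct calculation pivoting on the Gorenstein projective hypothesis on the cokernel $C$. First I would extract from the given four-term projective resolution the short exact sequence
$$0 \to A \to B \to C \to 0$$
in $\mmod \La$: injectivity of $A \to B$ is Yoneda in $\mmod \CX$, while surjectivity of $B \to C$ and exactness in the middle follow by evaluating the resolution at $\La \in \prj \La \subseteq \CX$ and using $F(\La)=0$ (since $F \in \mmod\underline{\CX}$).

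Next, apply Proposition \ref{Thefirst syzygy} to obtain a short exact sequence $0 \to G_1 \to (-, \underline{C}) \to F \to 0$ in $\mmod \underline{\CX}$, whose syzygy $G_1$ is governed by the horseshoe/pullback short exact sequence $0 \to \Omega_{\La}(C) \to A \oplus P_C \to B \to 0$ in $\mmod \La$. Since $\rm{Gprj}\mbox{-}\La$ is closed under syzygies, direct summands, and adjoining projectives, all three terms remain Gorenstein projective, so $G_1$ again satisfies the hypothesis of the lemma. Because $(-, \underline{C})$ is projective in $\mmod \underline{\CX}$, the long exact sequence of $\Ext$ yields $\Ext^{i+1}_{\underline{\CX}}(F, (-, \underline{X})) \cong \Ext^i_{\underline{\CX}}(G_1, (-, \underline{X}))$ for $i \geq 1$, together with an exact sequence
$$\Hom_{\underline{\CX}}((-, \underline{C}), (-, \underline{X})) \to \Hom_{\underline{\CX}}(G_1, (-, \underline{X})) \to \Ext^1_{\underline{\CX}}(F, (-, \underline{X})) \to 0.$$
Iterating the syzygy construction therefore reduces everything to showing that the left-hand map is surjective for any $F$ of the prescribed form.

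For the base case I would identify both sides via Yoneda. The projective presentation of $G_1$ in $\mmod \underline{\CX}$ is $(-, \underline{A}) \to (-, \underline{B}) \to G_1 \to 0$, obtained from its resolution in $\mmod \CX$ by passing to $\mmod \underline{\CX}$ and using $(-, \underline{P_C}) = 0$. Applying $\Hom_{\underline{\CX}}(-, (-, \underline{X}))$ and Yoneda ($\Hom_{\underline{\CX}}((-, \underline{Y}), (-, \underline{X})) \simeq \underline{\Hom}_{\La}(Y, X)$) identifies $\Hom_{\underline{\CX}}(G_1, (-, \underline{X}))$ with $\ker(\underline{\iota}^* : \underline{\Hom}_{\La}(B, X) \to \underline{\Hom}_{\La}(A, X))$, where $\iota : A \to B$ is the structure mono, and identifies the map out of $\underline{\Hom}_{\La}(C, X)$ with precomposition by $\pi : B \to C$. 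What remains is to show that every $\phi : B \to X$ whose restriction $\phi\iota$ factors through some projective $P$, say $\phi\iota = h g$ with $g : A \to P$ and $h : P \to X$, becomes in $\underline{\Hom}_{\La}(B, X)$ a precomposition with $\pi$.

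The main obstacle, and the unique place where the Gorenstein projective hypothesis on $C$ is used, is the extension step: since $C$ is Gorenstein projective and $P$ is projective, $\Ext^1_{\La}(C, P) = 0$, so applying $\Hom_{\La}(-, P)$ to $0 \to A \to B \to C \to 0$ yields a lift $\tilde g : B \to P$ of $g$. Then $\phi - h\tilde g$ annihilates $\iota$, hence factors through $\pi$ as $\psi\pi$ for some $\psi : C \to X$; since $h\tilde g$ factors through the projective $P$ it vanishes in $\underline{\Hom}_{\La}(B, X)$, giving $\phi = \underline{\psi\pi}$ as required. This proves the surjectivity, hence $\Ext^1_{\underline{\CX}}(F, (-, \underline{X})) = 0$, and the syzygy reduction closes the proof of vanishing in all degrees $i > 0$.
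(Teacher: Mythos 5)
Your proof is correct, and its skeleton coincides with the paper's: both arguments reduce the case $i>0$ to $i=1$ by observing that the syzygies of $F$ in $\mmod\underline{\CX}$ again admit projective resolutions in $\mmod\CX$ with Gorenstein projective terms (you iterate Proposition \ref{Thefirst syzygy}; the paper invokes Corollary \ref{N-the syzygy}), and both identify the degree-one obstruction, via Yoneda, with the exactness of $\underline{\Hom}_{\La}(C,X)\rt \underline{\Hom}_{\La}(B,X)\rt \underline{\Hom}_{\La}(A,X)$ at the middle term. The one genuine difference is how that exactness is established: the paper simply cites \cite[Lemma 2.2]{MT}, whereas you prove it from scratch --- given $\phi:B\rt X$ with $\phi\iota=hg$ factoring through a projective $P$, you use $\Ext^1_{\La}(C,P)=0$ (this is where Gorenstein projectivity of $C$ enters) to extend $g$ to $\tilde g:B\rt P$, so that $\phi-h\tilde g$ kills $\iota$ and hence factors through $\pi$, while $h\tilde g$ dies in the stable category. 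This makes your version self-contained and makes visible exactly which hypothesis on $C$ is being used, at the cost of a slightly longer write-up; the paper's presentation is shorter but opaque about why the Gorenstein condition is needed at this step. A minor cosmetic difference is that you phrase the degree-one case as surjectivity of $\Hom_{\underline{\CX}}((-,\underline{C}),(-,\underline{X}))\rt\Hom_{\underline{\CX}}(G_1,(-,\underline{X}))$ in the long exact sequence attached to $0\rt G_1\rt(-,\underline{C})\rt F\rt 0$, while the paper computes $\Ext^1_{\CX}(F,(-,\underline{X}))$ directly as the middle cohomology of the Hom complex of the resolution; these are the same computation.
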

\begin{proof}
	Assume $F$ is a functor in $\mmod \underline{\CX}$ having a projective resolution as in the statement. Since $\mmod \underline{\CX}$ is a subcategory of $\mmod \CX$ closed under extensions, hence $\Ext^1_{\underline{\CX}}(F, (-, \underline{X}))\simeq \Ext^1_{\CX}(F, (-, \underline{X}))$. The latter group is the homology of the middle term in the following sequence
$$\Hom_{\CA}((-, C), (-, \underline{X}))\rt \Hom_{\CA}((-, B), (-, \underline{X}))\rt \Hom_{\CA}((-, A), (-, \underline{X})), $$
where $\CA=\mmod \CX$.	 Now by using the Yoneda lemma,  the above sequence is isomorphic to  the following sequence of abelian groups
$$\underline{\rm{Hom}}_{\La}(C, X)\rt\underline{\rm{Hom}}_{\La}(B, X)\rt \underline{\rm{Hom}}_{\La}(A, X).$$
So $\Ext^1_{\CX}(F, (-, \underline{X}))$ is isomorphic to the homology of the middle term of the above sequence. But this sequence is exact, see \cite[Lemma 2.2]{MT},  and consequently $\Ext^1_{\CX}(F, (-, \underline{X}))=0$.  We know $A, B$ and $C$ are in $\CX\cap\rm{Gprj}\mbox{-}\La.$ The condition  of being closed under kernels of epimorphisms for the subcategories $\CX$ and $\rm{Gprj}\mbox{-}\La$ implies that for any $i>0$, $\Omega^i_{\La}(A), \Omega^i_{\La}(B)$ and $\Omega^i_{\La}(C)$	lie in $\CX\cap\rm{Gprj}\mbox{-}\La.$ Thanks to Corollary \ref{N-the syzygy},  the $i$-th syzygies $\Omega^i_{\underline{\CX}}(F)$ have the same as $F$ a projective resolution in $\mmod \CX$ induced by Gorenstein projective modules.  The first part of the proof follows that $\Ext^1_{\underline{\CX}}(\Omega^i_{\underline{\CX}}(F), (-, \underline{X}))=0$ for any $X \in \CX$ and $i>0 $. Now by using the shifting dimension, one can conclude the proof. 
\end{proof}
Let us recall some notations which are useful for the proof of the next result. The {\it Gorenstein projective dimension} of an object $M$ in an  abelian category $\CA$, denoted by $\rm{Gpd} \ M$, is defined as the minimum of  the integers $n \geqslant 0$ such that there exists an exact sequence 
$$0 \rt G_n \rt G_{n-1}\rt \cdots \rt G_1\rt G_0 \rt M \rt 0,$$
in $\CA$ with $G_i$ Gorenstein projective objects. Let $\mathcal{C}$ be an additive category such that $\mmod \mathcal{C}$ is abelian. We say  that  $\mathcal{C}$ is {\it Gorenstein} if every functor  of $\mmod \mathcal{C}$ has  finite Gorenstein projective dimension. A  Gorenstein category $\mathcal{C}$ is called of {\it  Gorenstein projective dimension at most $n$} if every object of $\mmod \mathcal{C}$ has  Gorenstein projective dimension at most $n$. An Artin algebra $\La$ is said to be $n$-Gorenstein, if $\rm{prj}\mbox{-}\La$ is a Gorenstein category of Gorenstein projective dimension at most $n$.\\
Let $M$ be a module in $\mmod \La$, and  $n$  a positive integer.  Let $g:M\to Q_M$ be a minimal left ($\prj \La$)-approximation. 
Then the cokernel of $g$ is called the {\em first projective cosyzygy} of $M$ and denoted by $\Omega^{-1}_{\CP}(M)$.
The {\em $n$-th cosyzygy} $\Omega^{-n}_{\CP}(M)$ is defined inductively as $\Omega^{-1}_{\CP}(\Omega^{-(n-1)}_{\CP}(M))$.
We say that $\CX$ is {\it closed  under projective cosyzygies}, if for any $X \in \CX$, $\Omega^{-1}_{\CP}(X)$ lies in $\CX$.

Now we are ready to give the characterization of   non-projective Gorenstein projective indecomposable objects in $\mmod \underline{\CX}$ via their minimal projective resolutions in $\mmod \CX.$
\begin{theorem}\label{Classification of Gorenstein projective functors}
Let $\CX \subseteq \mmod \La$ be  closed under  projective cosyzygies.  Assume that there is $n\geqslant 0$ such that $\Omega^n(\CX)$ is contained in $\rm{Gprj}\mbox{-}\La$.	Let $F$ be a non-projective indecomposable   functor in $\mmod \underline{\CX}$ with the following minimal projective resolution
$$0 \rt (-, A)\rt (-, B)\rt (-, C)\rt F \rt 0.$$ Then $F$ is a Gorenstein projective object in $\mmod \underline{\CX}$ if and only if $A, B$ and $C$ are Gorenstein projective modules in $\mmod \La.$
\end{theorem}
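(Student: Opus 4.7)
\emph{Plan.} I would prove the two implications separately. For the \emph{only if} direction, assume $F$ is Gorenstein projective. Proposition \ref{Gorenstein projective object in the stable catgeory via projective resolution} yields \emph{some} projective resolution $0\to(-,A')\to(-,B')\to(-,C')\to F\to 0$ of $F$ in $\mmod\CX$ whose module terms are Gorenstein projective in $\mmod\La$. Since $\mmod\CX$ is Krull--Schmidt with projective covers (as $\CX$ is a contravariantly finite variety over an Artin algebra), minimal projective resolutions are unique up to isomorphism and any projective resolution equals the minimal one plus a contractible complex of representable projectives. Matching degrees against the given minimal resolution forces $A'\simeq A\oplus P_1$, $B'\simeq B\oplus P_1\oplus P_2$, $C'\simeq C\oplus P_2$ for some $P_1, P_2\in\prj\La$; closure of $\rm{Gprj}\mbox{-}\La$ under direct summands then forces $A, B, C\in\rm{Gprj}\mbox{-}\La$.

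For the \emph{if} direction, set $\CY = \CX\cap\rm{Gprj}\mbox{-}\La$ and assume $A, B, C\in\CY$. The goal is to realize $F$ as a kernel in a totally acyclic complex of projectives in $\mmod\underline{\CX}$. For the \emph{left half} (projective resolution of $F$ in $\mmod\underline{\CX}$), I would iterate Proposition \ref{Thefirst syzygy}; by Corollary \ref{N-the syzygy}, each syzygy $\Omega^k_{\underline{\CX}}(F)$ again admits a projective resolution in $\mmod\CX$ whose representable terms are indexed by modules in $\CY$, using that $\CX$ and $\rm{Gprj}\mbox{-}\La$ are both closed under kernels of epimorphisms. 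Lemma \ref{VAnishing Gorenstein left approximation} applied to each such syzygy gives $\Ext^i_{\underline{\CX}}(\Omega^k_{\underline{\CX}}(F), (-,\underline{X})) = 0$ for all $X\in\CX$ and $i>0$, so the projective resolution remains exact after $\Hom_{\underline{\CX}}(-,(-,\underline{Y}))$ for every $Y\in\CX$.

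For the \emph{right half} (projective coresolution), I would dualize the pull-back construction of Proposition \ref{Thefirst syzygy}: the closure of $\CX$ (and the analogous closure of $\rm{Gprj}\mbox{-}\La$) under projective cosyzygies gives $A^+, B^+, C^+\in\CY$ where $M^+ = \Omega^{-1}_{\CP}(M)$, and a horseshoe argument applied to the three short exact sequences $0\to M\to P_M\to M^+\to 0$ for $M\in\{A, B, C\}$ would produce an embedding $F\hookrightarrow(-,\underline{Z^0})$ in $\mmod\underline{\CX}$ (for some $Z^0\in\CY$) whose cokernel $F^+$ admits a projective resolution in $\mmod\CX$ of the same form with Gorenstein projective module terms. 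Iteration then yields the coresolution, exactness after $\Hom_{\underline{\CX}}(-,(-,\underline{Y}))$ again follows from the same Ext-vanishing lemma applied to the successive $F^+$'s, and splicing the two halves produces the desired totally acyclic complex. The technical heart of the argument is this dual horseshoe: one has to identify the correct object $Z^0\in\CY$ from $A, B, C$, their cosyzygies $A^+, B^+, C^+$, and the projective covers $P_A, P_B, P_C$; verify that the candidate embedding $F\hookrightarrow(-, \underline{Z^0})$ lives in $\mmod\underline{\CX}$ rather than only in $\mmod\CX$; and confirm that $F^+$ inherits a projective resolution of the stated form so that the construction iterates.
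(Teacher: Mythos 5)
Your ``only if'' direction is essentially the paper's: it invokes Proposition \ref{Gorenstein projective object in the stable catgeory via projective resolution} and then passes from \emph{some} resolution with Gorenstein projective terms to the minimal one via uniqueness of minimal projective resolutions and closure of $\rm{Gprj}\mbox{-}\La$ under direct summands. (One harmless slip: the complementary summands coming from the contractible pieces are representables $(-,U)$ with $U$ an arbitrary module in $\CX$, not necessarily $U \in \prj\La$; the conclusion survives because all you need is that $A$, $B$, $C$ are direct summands of Gorenstein projective modules.)

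The ``if'' direction is where there is a genuine gap. You propose to verify Gorenstein projectivity from the definition by building a complete resolution of $F$ in $\mmod \underline{\CX}$. The left half is unproblematic: Lemma \ref{VAnishing Gorenstein left approximation} applied to $F$ alone already gives $\Ext^i_{\underline{\CX}}(F,(-,\underline{Y}))=0$ for all $i>0$ and $Y \in \CX$, which is exactly the Hom-acyclicity of the projective resolution. But the right half --- the embedding $F\hookrightarrow (-,\underline{Z^0})$ whose cokernel $F^+$ again has a $\mmod \CX$-resolution with terms in $\CY$ --- is precisely the hard part, and you do not construct it; you list what ``one has to'' verify. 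The difficulty is concrete. Running Proposition \ref{Thefirst syzygy} backwards, a functor $F^+$ with $\Omega_{\underline{\CX}}(F^+)\simeq F$ and resolution $(A'\rt B'\rt C')$ must satisfy $\Omega_{\La}(C')\simeq A$, $B'\simeq C$ and $A'\oplus P_{C'}\simeq B$; the last condition demands that the projective cover of the cosyzygy of $A$ be a direct summand of $B$, which is not automatic, so the identification holds only up to projective direct summands and that fuzz has to be controlled at every step of the iteration. Likewise, the natural pushout of $A\rt B$ along the left approximation $A\rt Q_A$ is an extension of $C$ by the projective $Q_A$, and $\CX$ is only assumed quasi-resolving, not extension-closed, so there is no reason for that pushout to remain in $\CX$. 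None of this makes the strategy hopeless --- the statement is true --- but as written the crux of the implication is asserted rather than proved.

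The paper avoids the coresolution entirely and you should be aware of the shortcut: by \cite[Theorem 3.11]{MT} the hypotheses force $\underline{\CX}$ to be a Gorenstein category of dimension at most $3n$, so $F$ has finite Gorenstein projective dimension in $\mmod \underline{\CX}$; the analogue of \cite[Theorem 2.10]{Ho} then provides a short exact sequence $0\rt L\rt G\rt F\rt 0$ with $G$ Gorenstein projective and $L$ of finite projective dimension, and the Ext-vanishing of Lemma \ref{VAnishing Gorenstein left approximation} --- the one ingredient you do identify and use correctly --- gives $\Ext^1_{\underline{\CX}}(F,L)=0$, so the sequence splits and $F$ is a direct summand of $G$. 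To complete your argument you must either carry out the dual horseshoe with the projective-summand bookkeeping in full, or replace it by this finiteness-of-Gorenstein-dimension argument.
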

\begin{proof}
The ``only if'' part follows from Proposition \ref{Gorenstein projective object in the stable catgeory via projective resolution}. For the inverse implication, assume $A, B$ and $C$ are Gorenstein projective. Due to \cite[Theorem 3.11]{MT}, the additive category $\underline{\CX}$ is Gorenstein of dimension at most $3n$. Hence the Gorenstein projective dimension of $F$ in $\mmod \underline{\CX}$ is finite. As the modules category, see \cite[Theorem 2.10]{Ho}, there exists a short exact sequence in $\mmod \underline{\CX}$
$$0 \rt L \rt G \rt F\rt 0,$$
with $G$ is a Gorenstein projective object  in $\mmod \underline{\CX}$ and the projective dimension of $L$ is finite in $\mmod \underline{\CX}.$ From Lemma \ref{VAnishing Gorenstein left approximation}, we observe that $\Ext^i_{\underline{\CX}}(F, (-, \underline{X}))=0$ for any $X \in \CX$ and $i>0 $. Therefore, $\Ext^1_{\underline{\CX}}(F, L)=0$. Hence the above short exact sequence must be split, so the claim follows.
\end{proof}
The above characterization might not be true for all the Gorenstein projective functors. In fact, consider an indecomposable module  $X \in \CX$ which does not belong to $\rm{Gprj}\mbox{-}\La$,  we have the following minimal projective resolution of $(-, \underline{X})$, 
$0 \rt (-, \Omega_{\La}(X))\rt (-, P_X)\rt (-, X)\rt (-, \underline{X})\rt 0 $
in $\mmod \CX.$ But $X$ does not lie in $\rm{Gprj}\mbox{-}\La.$

 We point out that  for a quasi-resolving subcategory  $\CX$ the condition `` $\Omega^n(\CX)$ is contained in $\rm{Gprj}\mbox{-}\La$ and closed under cosyzygies''  is called $(\bf{G}_n)$ in \cite{MT}.
 
\subsection{The second functor}
We begin by the following general construction which is essential to define another functor in similar to Construction \ref{FirstCoonstr}  to make a connection between a subcategory and  the modules category over an Artin algebra.
 \begin{construction}\label{Secounddconstruction}
 	Let $\mathcal{C}'$ be a full subcategory of an additive category $\mathcal{C}$.  As discussed in \cite[Section 3]{Au1}, the restriction functor $\rm{res}:\Mod \mathcal{C}\rt \Mod \mathcal{C}'$ admits left and right adjoints. Here $\Mod \mathcal{C}$  and $\Mod \mathcal{C}'$ denote the category of all contravariant functors from $\mathcal{C}$ and $\mathcal{C}'$ to the category of abelian groups, respectively.  Since the left adjoint ${}_{\mathcal{C}'}\varUpsilon_{\mathcal{C}}$  plays an important role in order to define our second promised functor in the paper, we devote this construction to describe the left adjoint in detail on the finitely presented functors. Let $F$ be a finitely presented functor
 	over $\mathcal{C}'$.  So there is an exact sequence $(-, C_2)\rt (-, C_1)\rt F \rt 0$ with $C_i$ in $\mathcal{C'}$, $i=1, 2$. We can naturally construct the functor $\tilde{F}$ in $\mmod \mathcal{C}$, by defining $\tilde{F}(C):=\text{Cok}(\Hom_{\mathcal{C}}(C, C_2)\rt \Hom_{\mathcal{C}}(C, C_1))$ for any $C$ in $\mathcal{C}$. Hence, by definition, we have the projective presentation $(-,C_2)\rt(-, C_1)\rt \tilde{F}\rt 0$ in $\mmod \mathcal{C}$. In the latter case, the functors $(-, C_i)$ are considered as representable functors in $\mmod \mathcal{C}$. For simplicity, the both cases are denoted by the same notation. By \cite[Proposition 3.1]{Au1}, ${}_{\mathcal{C}'}\varUpsilon_{\mathcal{C}}(F)=\tilde{F}.$ Let $F\st{\sigma}\rt G$ be a morphism in $\mmod \mathcal{C}'$. The morphism $\sigma$ can be lifted as the following to their existing projective presentations
 		$$\xymatrix{
 		(-, C_2) \ar[d]^{(-, f_2)} \ar[r] & (-, C_1) \ar[d]^{(-, f_1)}
 		\ar[r] &F \ar[d]^{\sigma}\ar[r] & 0&\\
 		 (-, C'_2)  \ar[r] & (-, C'_1)
 		\ar[r] & G\ar[r] &0.& \\ 	} 	 $$
 	in $\mmod \mathcal{C}'.$ For each $C \in \mathcal{C}$, $\tilde{\sigma}_C: \tilde{F}(C) \rt \tilde{G}(C)$ is defined as the following 
 		$$\xymatrix{
 		\Hom_{\mathcal{C}}(C, C_2) \ar[d]^{(C, f_2)} \ar[r] & \Hom_{\mathcal{C}}(C, C_1) \ar[d]^{(C, f_1)}
 		\ar[r] &\tilde{F }(C) \ar[d]^{\tilde{\sigma}_C}\ar[r] & 0&\\
 		\Hom_{\mathcal{C}}(C, C'_2)  \ar[r] & \Hom_{\mathcal{C}}(C, C'_1)
 		\ar[r] & \tilde{G}(C)\ar[r] &0.& \\ 	} 	 $$
 	Again by \cite[Proposition 3.1]{Au1}, ${}_{\mathcal{C}'}\varUpsilon_{\mathcal{C}}(\sigma)=\tilde{\sigma}.$ By the result of Auslander's paper, the functor ${}_{\mathcal{C}'}\varUpsilon_{\mathcal{C}}$ is  fully faithful.  We call this functor the extension functor from $\mathcal{C}'$ into $\mathcal{C}$. 	  
 \end{construction}

\begin{setup}\label{4.17}
	Let $\CX \subseteq \mmod \La$ be a contravariantly finite and quasi-resolving subcategory being  closed under  projective cosyzygies. Also assume that there is $n\geqslant 0$ such that $\Omega^n(\CX)$ is contained in $\rm{Gprj}\mbox{-}\La$.  Denote $\CY=\CX\cap \rm{Gprj}\mbox{-}\La$. It is clearly that $\CY$ is again 	 quasi-resolving subcategory of $\mmod \La.$ Note that by \cite[Theorem 1.4]{MT}, the equality $\Omega^n(\CX)=\CX\cap \rm{Gprj}\mbox{-}\La$ holds.
\end{setup}
Assume $\CX$ and $\CY$ are the same as in Setup \ref{4.17}. Specializing Construction \ref{Secounddconstruction} for $\underline{\CY}\subseteq \underline{\CX}$, we reach the extension functor ${}_{\underline{\CY}}\varUpsilon_{\underline{\CX}}:\mmod \underline{\CY}\rt \mmod \underline{\CX}$.  Let us remark that by identifying $\mmod \underline{\CY}$ and $\mmod \underline{\CX}$, respectively,  as subcategories of $\mmod \CY$ and $\mmod \CX$, we can consider ${}_{\underline{\CY}}\varUpsilon_{\underline{\CX}}$ as a restriction of the extension functor ${}_{\CY}\varUpsilon_{\CX}:\mmod\CY\rt \mmod \CX$.

 \begin{proposition}\label{Extextensionfunctor}
 Let $\CX$ and $\CY$ be as in Setup \ref{4.17}. Then the extension functor ${}_{\underline{\CY}}\varUpsilon_{\underline{\CX}}$, defined in the above,  is an exact functor. Moreover, its essential image is contained in $\rm{Gprj}\mbox{-}\underline{\CX}$.
 \end{proposition}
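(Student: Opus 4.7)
The plan is to realize $\tilde{F} := {}_{\underline{\CY}}\varUpsilon_{\underline{\CX}}(F)$ by an explicit projective resolution in $\mmod \CX$ whose terms are representables of Gorenstein projective $\La$-modules; from this description both assertions fall out, the Gorenstein projectivity by feeding the resolution into Lemma \ref{VAnishing Gorenstein left approximation} and reusing the splitting argument of Theorem \ref{Classification of Gorenstein projective functors}, and the exactness via a long exact homology argument applied to a short exact sequence of such resolutions.

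For $F \in \mmod \underline{\CY}$, the analogue of Proposition \ref{Thefirst syzygy} applied to the quasi-resolving subcategory $\CY$ yields a projective resolution
$$0 \rt (-, A) \rt (-, B) \rt (-, C) \rt F \rt 0$$
in $\mmod \CY$, with $A, B, C \in \CY \subseteq \rm{Gprj}\mbox{-}\La$ and $B \to C$ surjective as a morphism of $\La$-modules. Left exactness of $\Hom_{\La}(X, -)$ for each $X \in \CX$ then produces the exact sequence
$$0 \rt (-, A) \rt (-, B) \rt (-, C) \rt G \rt 0$$
in $\mmod \CX$, with $G$ the cokernel. Since $G|_{\CY} = F$ and $\prj\La \subseteq \CY$, the functor $G$ vanishes on projectives and hence lies in $\mmod \underline{\CX}$; moreover the surjectivity of $B \to C$ forces every map $X \to C$ factoring through a projective to already factor through $B$, which matches the formula for $\tilde{F}$ from Construction \ref{Secounddconstruction} using the induced presentation in $\mmod \underline{\CX}$, so $G \cong \tilde{F}$. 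Thus $\tilde{F}$ admits a projective resolution in $\mmod \CX$ with representables of Gorenstein projective $\La$-modules.

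For the Gorenstein projective claim, Lemma \ref{VAnishing Gorenstein left approximation} immediately yields $\Ext^i_{\underline{\CX}}(\tilde{F}, (-, \underline{X})) = 0$ for every $X \in \CX$ and $i > 0$. Under Setup \ref{4.17}, $\underline{\CX}$ is Gorenstein of finite projective dimension by \cite[Theorem 3.11]{MT}, so $\tilde{F}$ has finite Gorenstein projective dimension in $\mmod \underline{\CX}$, and there is a short exact sequence $0 \to L \to G' \to \tilde{F} \to 0$ with $G'$ Gorenstein projective and $L$ of finite projective dimension. Dimension shifting propagates the vanishing on representables to $\Ext^1_{\underline{\CX}}(\tilde{F}, L) = 0$, forcing this sequence to split and presenting $\tilde{F}$ as a direct summand of $G'$; hence $\tilde{F}$ is Gorenstein projective in $\mmod \underline{\CX}$.

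For exactness, given a short exact sequence $0 \to F_1 \to F_2 \to F_3 \to 0$ in $\mmod \underline{\CY}$, the horseshoe lemma produces compatible projective resolutions of the above type with $A_2 \cong A_1 \oplus A_3$, $B_2 \cong B_1 \oplus B_3$, $C_2 \cong C_1 \oplus C_3$, so that the three columns of projective terms assemble into split short exact sequences. Extending each row to $\mmod \CX$ by the construction above yields, after truncating off $\tilde{F}_i$, a short exact sequence of three-term complexes of representables whose columns are split exact and whose cohomologies are $\tilde{F}_i$ in one degree and zero elsewhere; the associated long exact sequence in cohomology then collapses to $0 \to \tilde{F}_1 \to \tilde{F}_2 \to \tilde{F}_3 \to 0$ in $\mmod \CX$, and hence in $\mmod \underline{\CX}$ since each $\tilde{F}_i$ vanishes on projectives. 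The main technical obstacle is the identification $G \cong \tilde{F}$ together with compatibility of the horseshoe lift under passage to $\mmod \CX$; once these are handled the rest is a diagram chase and invocation of the paper's preceding vanishing and classification results.
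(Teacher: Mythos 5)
Your proposal is correct and follows essentially the same route as the paper: extend a projective resolution of $F$ from $\mmod \CY$ to a projective resolution of $\tilde{F}$ in $\mmod \CX$ with Gorenstein projective terms, deduce Gorenstein projectivity of $\tilde{F}$, and obtain exactness from the horseshoe lemma together with the degreewise splitness of the columns. The only (mild) divergence is that where the paper simply cites Theorem \ref{Classification of Gorenstein projective functors}, you re-run the ``if''-direction argument (Lemma \ref{VAnishing Gorenstein left approximation}, finiteness of Gorenstein projective dimension via \cite{MT}, and the splitting of $0 \to L \to G' \to \tilde{F} \to 0$) directly on $\tilde{F}$, which is slightly more careful since the cited theorem is stated only for indecomposable non-projective functors with minimal resolutions.
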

 \begin{proof}
 	To prove the latter claim in the statement, let $F \in \mmod\underline{\CY}$. Then there is an exact sequence in $\mmod \CY$
 	$$0 \rt (-, A)\rt (-, B)\rt (-, C)\rt F \rt 0$$
 	with  modules $A, B$ and $C$ in $\CY.$ The image $\tilde{F}$ under the functor ${}_{\underline{\CY}}\varUpsilon_{\underline{\CX}}$ has the following projective resolution 
 	$$0 \rt (-, A)\rt (-, B)\rt (-, C)\rt \tilde{F} \rt 0$$
 	in $\mmod \CX.$ Let us emphasis that here the representable  functors in the first exact sequence and the second exact sequence in the above,  considered as representable functors in $\mmod \CY$ and $\mmod \CX$, respectively. Now by the characterization given in Theorem \ref{Classification of Gorenstein projective functors}, we infer that $\tilde{F}$  lies in $\rm{Gprj}\mbox{-}\underline{\CX}$, as required. To prove the first part of the statement, we should show that the image of a short exact sequence in $\mmod \underline{\CY}$ is mapped to a short exact sequence in $\mmod \underline{\CX}.$ Take a short exact sequence $\eta:0 \rt F_1\rt F_2 \rt F_3\rt 0$ in $\mmod \underline{\CY}.$ 
 	Assume $0 \rt (-, X)\rt (-, Y)\rt (-, Z)\rt F_1\rt 0$ and $0 \rt (-, X')\rt (-, Y')\rt (-, Z')\rt F_3\rt 0$, respectively,  are projective resolutions of $F_1$ and $F_3$ in $\mmod \CY.$ By a standard argument,  we have the following  commutative diagram in $\mmod \CY$ with exact rows and the first three columns (from the left side hand) splitting:
 	$$\xymatrix{& 0 \ar[d] & 0 \ar[d] & 0 \ar[d]& 0\ar[d]& &\\
 		0 \ar[r] & (-, X) \ar[d] \ar[r] & (-, Y) \ar[d]
 		\ar[r] &(-, Z) \ar[d] \ar[r] & F_1\ar[r]\ar[d]&0\\
 		0 \ar[r] & (-, X\oplus X')  \ar[r]\ar[d] & (-, Y\oplus Y')\ar[d]
 		\ar[r] & (-, Z\oplus Z') \ar[r]\ar[d] & F_2\ar[r]\ar[d]& 0\\0\ar[r] &(-, X')\ar[r]\ar[d]&(-,  Y')\ar[r]\ar[d]&(-, Z')\ar[d] \ar[r]&F_3 \ar[r]\ar[d]&0 &\\ &0&0&0&0&&
 	} 	 $$
 further, the sequence $\eta$ is settled in  the  rightmost column.	Because of being splitting the first three columns,  we can consider this part of the above digram  as a commutative diagram in $\mmod \CX$. Then by getting cokernel of the induced commutative digram in $\mmod \CX$, we obtain a short exact sequence in $\mmod \CX$. In fact, the obtained short exact sequence in $\mmod \CX$ is the image of $\eta$ under the functor  ${}_{\underline{\CY}}\varUpsilon_{\underline{\CX}}$. So we are done. 	
 \end{proof}

 We know by the above proposition the essential image of  ${}_{\underline{\CY}}\varUpsilon_{\underline{\CX}}$ is contained in $\rm{Gprj}\mbox{-}\underline{\CX}$. Denote by ${}_{\underline{\CY}}\hat{\varUpsilon}_{\underline{\CX}}:\mmod \CY\rt \rm{Gprj}\mbox{-}\underline{\CX}$ the induced functor.

 \begin{definition}\label{almsotdense}
 	A functor $F:\mathcal{C}\rt \mathcal{C}'$  is called almost dense if all but  finitely many indecomposable objects, up to isomorphism, are in the essential image of $F.$
 \end{definition}

\begin{proposition}\label{Proposition 4.10}
	Let $\CX$ and $\CY$ be as in Setup \ref{4.17}. Then the following assertions hold.
	\begin{itemize}
		\item [$(i)$] The induced   functor ${}_{\underline{\CY}}\hat{\varUpsilon}_{\underline{\CX}}:\mmod \CY\rt \rm{Gprj}\mbox{-}\underline{\CX}$ is  fully faithful exact. Here assume that $\rm{Gprj}\mbox{-}\underline{\CX}$ gets a natural exact structure from $\mmod \underline{\CX}.$
		\item [$(ii)$] The essential image of ${}_{\underline{\CY}}\hat{\varUpsilon}_{\underline{\CX}}$  contains all indecomposable functors in $\rm{Gprj}\mbox{-}\underline{\CX}$ except the  projective  indecomposable $(-, \underline{X})$, where $X$ is an indecomposable module in  $\CX$ but not in $\CY$.
		\item [$(iii)$]  If $\CX$ is of finite representation type, then ${}_{\underline{\CY}}\hat{\varUpsilon}_{\underline{\CX}}$ is almost dense. 
	\end{itemize}
	\end{proposition}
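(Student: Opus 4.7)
The plan is to dispatch the three assertions in order, using the machinery already developed in this section.

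Part $(i)$ is essentially bookkeeping. Exactness follows from Proposition~\ref{Extextensionfunctor}: since the essential image lies in $\rm{Gprj}\mbox{-}\underline{\CX}$, exactness of ${}_{\underline{\CY}}\varUpsilon_{\underline{\CX}}$ into $\mmod\underline{\CX}$ transfers to exactness of the induced ${}_{\underline{\CY}}\hat{\varUpsilon}_{\underline{\CX}}$ with respect to the natural exact structure on $\rm{Gprj}\mbox{-}\underline{\CX}$. Fully faithfulness is inherited from the ambient extension functor, which is fully faithful by the general fact invoked in Construction~\ref{Secounddconstruction} (namely \cite[Proposition 3.1]{Au1}); restricting the codomain preserves both properties.

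For part $(ii)$, I would argue by three cases matching the types of indecomposables in $\rm{Gprj}\mbox{-}\underline{\CX}$. First, for a non-projective indecomposable $F\in\rm{Gprj}\mbox{-}\underline{\CX}$, Theorem~\ref{Classification of Gorenstein projective functors} produces a minimal projective resolution $0\rt(-,A)\rt(-,B)\rt(-,C)\rt F\rt 0$ in $\mmod\CX$ with $A,B,C\in\CY$. Restricting this complex to $\CY$ remains exact (kernels in $\mmod\CY$ are computed via kernels in $\mmod\La$), and its cokernel $F'\in\mmod\underline{\CY}$ then extends back to $F$ by the very construction of ${}_{\underline{\CY}}\varUpsilon_{\underline{\CX}}$. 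Second, for an indecomposable projective $(-,\underline{X})$ with $X\in\CY$ non-projective over $\La$, the functor $(-,\underline{X})$ viewed in $\mmod\underline{\CY}$ extends to $(-,\underline{X})$ in $\mmod\underline{\CX}$. Third, the delicate case is when $X\in\CX\setminus\CY$ is indecomposable (and necessarily non-projective over $\La$, since $\prj\La\subseteq\CY$): suppose for contradiction $(-,\underline{X})\cong{}_{\underline{\CY}}\hat{\varUpsilon}_{\underline{\CX}}(F')$ for some $F'\in\mmod\underline{\CY}$. Since $\CY$ is quasi-resolving and $F'$ vanishes on projectives, $F'$ admits a projective resolution $0\rt(-,A')\rt(-,B')\rt(-,C')\rt F'\rt 0$ in $\mmod\CY$, which extends to a projective resolution of $(-,\underline{X})$ in $\mmod\CX$ with $A',B',C'\in\CY$. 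By Yoneda the surjection $(-,C')\twoheadrightarrow(-,\underline{X})$ is represented by some $g\in\Hom_\La(C',X)$, and surjectivity at $X$ produces $f\in\Hom_\La(X,C')$ with $\underline{gf}=\underline{1}_X$. Locality of $\End_\La(X)$ together with the inclusion $\CP(X,X)\subseteq\rad\End_\La(X)$ — both consequences of $X$ being indecomposable non-projective — force $gf$ to be a unit, so $X$ splits off $C'\in\CY$; since $\CY$ is closed under summands, $X\in\CY$, contradicting the hypothesis.

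Part $(iii)$ is then immediate from $(ii)$: finite representation type of $\CX$ means $\CX\setminus\CY$ has only finitely many indecomposables, so by $(ii)$ only finitely many indecomposable objects of $\rm{Gprj}\mbox{-}\underline{\CX}$ fall outside the essential image. I expect the main obstacle to be the third case of $(ii)$, specifically the use of locality of $\End_\La(X)$ to extract the splitting $X\mid C'$; the remaining steps reduce to direct applications of Theorem~\ref{Classification of Gorenstein projective functors} and routine bookkeeping with the extension functor of Construction~\ref{Secounddconstruction}.
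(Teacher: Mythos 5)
Your proposal is correct and follows the same overall strategy as the paper's proof: part $(i)$ is deduced from Proposition \ref{Extextensionfunctor} and Construction \ref{Secounddconstruction}, part $(ii)$ combines Theorem \ref{Classification of Gorenstein projective functors} (for density onto the non-projective indecomposables) with a by-contradiction exclusion of the projectives $(-,\underline{X})$ for indecomposable $X\in\CX$ not in $\CY$, and part $(iii)$ is immediate from $(ii)$. The one place where you genuinely diverge is the mechanism of the exclusion step. The paper extends a projective presentation $(-,N)\rt(-,K)\rt F\rt 0$ from $\mmod \CY$ to $\mmod \CX$ and compares the resulting presentation of $(-,\underline{X})$ with the minimal presentation $(-,P_X)\rt(-,X)\rt(-,\underline{X})\rt 0$, concluding by minimality that $X$ is a direct summand of $K\in\CY$. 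You instead evaluate the epimorphism $(-,C')\twoheadrightarrow(-,\underline{X})$ at $X$ and use Yoneda together with the locality of $\End_{\La}(X)$ and the inclusion $\CP(X,X)\subseteq\rad\End_{\La}(X)$ (valid since $X$ is indecomposable and non-projective) to split $X$ off $C'$. Both routes land on the same contradiction with $\CY$ being closed under direct summands; yours is a bit more self-contained, avoiding the appeal to uniqueness of minimal projective presentations, at the cost of spelling out the local-endomorphism-ring argument. Your preliminary observation that any indecomposable $X\in\CX\setminus\CY$ is automatically non-projective because $\prj\La\subseteq\CY$ is exactly what makes that argument legitimate, and your treatment of the other two cases of $(ii)$ (restricting the resolution with terms in $\CY$ back to $\mmod\underline{\CY}$ and extending again) matches what the paper leaves implicit in the phrase ``follows from Theorem \ref{Classification of Gorenstein projective functors}''.
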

\begin{proof}
 $(i)$ is a direct consequence of Proposition \ref{Extextensionfunctor} and Construction \ref{Secounddconstruction}. $(ii)$ follows from Theorem \ref{Classification of Gorenstein projective functors} but it remains to show that the representable functors as in the statement are not in the essential image.   Assume to contrary that there is a representable functor $(-, \underline{X})$ as in the statement in the essential image. Then $\tilde{F}={}_{\underline{\CY}}\hat{\varUpsilon}_{\underline{\CX}}(F) \simeq (-, \underline{X})$. Let $(-, N)\rt (-, K)\rt F\rt 0$ be a projective presentation of $F$ in $\mmod \CY.$ Hence  we have the projective presentation $(-, N)\rt (-, K)\rt \tilde{F}\rt 0$ in $\mmod \CX.$ By the isomorphism, we also have the projective presentation  $(-, N)\rt (-, K)\rt (-, \underline{X})\rt 0$ of $(-, \underline{X})$ in $\mmod \CX.$ On the other hand, we know $(-, P_X)\rt (-, X)\rt (-, \underline{X})\rt 0$ is a minimal projective presentation. Considering these  two projective presentations  of $(-, \underline{X})$ in $\mmod \CX$ follows that  $X$ is isomorphic to a direct summand of $K$. But $K$ is in $\CY$, this means $X \in \CY$, that is  a contraction. $(iii)$ is an immediate corollary of $(ii)$.
\end{proof}

 Specializing the above observation to the case when $\CX$ is the subcategory of Gorenstein projective modules over a Gorenstein Artin algebra, we get the following corollaries. For simplicity, we denote the  functor ${}_{\underline{\rm{Gprj}}\mbox{-}\La}\hat{\varUpsilon}_{\underline{\rm{mod}}\mbox{-}\La}$ by ${}_{\CG}\hat{\varUpsilon}_{\La}$.
 
 \begin{corollary}\label{extension functor stable auslander algebra}
 	Let $\La$ be an $n$-Gorenstein algebra. Then
 	\begin{itemize}
 		\item [$(1)$]  
 	 The functor ${}_{\CG}\hat{\varUpsilon}_{\La}: \mmod \underline{\rm{Gprj}}\mbox{-}\La \rt \rm{Gprj}\mbox{-}\underline{\rm{mod}}\mbox{-}\La$ is fully faithful exact. 	 
 	\item[$(2)$] The essential image of ${}_{\CG}\hat{\varUpsilon}_{\La}$ contains all indecomposable functors in $\rm{Gprj}\mbox{-}\underline{\rm{mod}}\mbox{-}\La$ but indecomposable functors $(-, \underline{X})$ such that $X$ is not isomorphic to a  Gorenstein projective indecomposable module. 
 	
 	\item[$(3)$]
 	If $\La$ is of finite representation type, then the functor ${}_{\CG}\hat{\varUpsilon}_{\La}: \mmod \underline{\rm{Gprj}}\mbox{-}\La \rt \rm{Gprj}\mbox{-}\underline{\rm{mod}}\mbox{-}\La$ is almost dense.
 	  	
 		\end{itemize}
 	
 \end{corollary}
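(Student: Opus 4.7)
The plan is to deduce this corollary as a direct specialization of Proposition~\ref{Proposition 4.10} to the choice $\CX=\mmod \La$. In that case $\CY=\CX\cap\rm{Gprj}\mbox{-}\La=\rm{Gprj}\mbox{-}\La$, and the functor ${}_{\CG}\hat{\varUpsilon}_{\La}$ in the statement of the corollary is by definition the functor ${}_{\underline{\CY}}\hat{\varUpsilon}_{\underline{\CX}}$ of Proposition~\ref{Proposition 4.10}. So the whole task reduces to verifying the hypotheses of Setup~\ref{4.17} for $\CX=\mmod \La$ under the $n$-Gorenstein assumption on $\La$, after which the three assertions will follow directly from the three parts of Proposition~\ref{Proposition 4.10}.

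For $\CX=\mmod \La$, the contravariantly finite, quasi-resolving and projective-cosyzygy-closure properties are tautological: $\mmod \La$ contains $\prj \La$, it is closed under finite direct sums, kernels of epimorphisms and projective cosyzygies because none of these operations leaves $\mmod \La$, and it is trivially contravariantly finite in itself. The only condition requiring genuine input is the existence of an integer $n\geqslant 0$ with $\Omega^n(\mmod \La)\subseteq \rm{Gprj}\mbox{-}\La$.

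For this I would unfold the definition of $n$-Gorenstein recalled in the text. The Yoneda functor gives an equivalence $\mmod \prj \La\simeq \mmod \La$, so the hypothesis that $\prj \La$ is Gorenstein of dimension at most $n$ translates into the statement that every finitely generated $\La$-module $M$ satisfies $\Gpd M\leqslant n$. A standard shifting argument then yields $\Omega^n_{\La}(M)\in \rm{Gprj}\mbox{-}\La$ for every such $M$, which is exactly the inclusion $\Omega^n(\mmod \La)\subseteq \rm{Gprj}\mbox{-}\La$ required by Setup~\ref{4.17}.

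With the setup in force, (1) and (2) follow immediately from parts $(i)$ and $(ii)$ of Proposition~\ref{Proposition 4.10} after identifying $\underline{\CY}=\underline{\rm{Gprj}}\mbox{-}\La$, $\underline{\CX}=\underline{\rm{mod}}\mbox{-}\La$, and interpreting the ``indecomposable modules in $\CX$ but not in $\CY$'' as precisely the non-Gorenstein-projective indecomposable $\La$-modules. For (3), the additional hypothesis that $\La$ is of finite representation type is exactly the condition that $\mmod \La=\add M$ for some representation generator $M$, so $\CX$ is of finite representation type and part $(iii)$ of Proposition~\ref{Proposition 4.10} applies. I do not anticipate any real obstacle; the proof is essentially bookkeeping, the only non-trivial point being the translation from ``$n$-Gorenstein'' to the syzygy condition in Setup~\ref{4.17}.
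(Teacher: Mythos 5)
Your proposal is correct and follows essentially the same route as the paper: the paper's own proof also deduces the corollary directly from Proposition \ref{Proposition 4.10} together with the fact that over an $n$-Gorenstein algebra $\Omega^n(\mmod \La)=\rm{Gprj}\mbox{-}\La$. Your extra unpacking of why the $n$-Gorenstein hypothesis yields the syzygy condition of Setup \ref{4.17} is sound and just makes explicit what the paper leaves implicit.
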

 \begin{proof}
 	The corollary is a direct consequence of the above proposition and using this fact that over an $n$-Gorenstein Artin algebra $\La$,  $\Omega^n(\mmod \La)=\rm{Gprj}\mbox{-}\La.$
 	\end{proof}
Let $\La$ be a Gorenstein algebra. Consider the following  composition functor $$\rm{Gprj}\mbox{-}T_2(\La)/ \CV \st{\Psi_{\rm{Gprj}\mbox{-}\La}}\lrt \mmod \underline{\rm{Gprj}}\mbox{-}\La \st{{}_{\CG}\hat{\varUpsilon}_{\La}} \lrt\rm{Gprj}\mbox{-} \underline{\rm{mod}}\mbox{-}\La $$ where $\Psi_{\rm{Gprj}\mbox{-}\La}$ is introduced in Construction \ref{FirstCoonstr}. The composition  is fully faithful by Theorem \ref{Thefirst} and Corollary \ref{extension functor stable auslander algebra}. Further, the corollary follows that the essential image of ${}_{\CG}\hat{\varUpsilon}_{\La} \circ \Psi_{\rm{Gprj}\mbox{-}\La}$ contains all indecomposable functors except the  projective indecomposable functors $(-, \underline{X})$, where $X$ is not Gorenstein projective. Also, if $\La$ is of finite representation type, then the composition functor is almost dense. Hence, in this way we can make a nice connection between the subcategories of Gorenstein projective objects of two different abelian categories.

\begin{corollary}\label{CM-finitensee of triangular matrixes and stable auslander lagebrs}
	Let $\La$ be a Gorenstein algebra of finite representation type. Then the following conditions are equivalent.
	\begin{itemize}
		\item [$(1)$] $T_2(\La)$ is $\rm{CM}$-finite and Gorenstein;
		\item[$(2)$] The stable Cohen-Macaulay Auslander algebra  of $\La$  is a representation-finite self-injective algebra;
		\item[$(3)$] The stable Auslander algebra of $\La$ is  $\rm{CM}$-finite and Gorenstein.
	\end{itemize} 
\end{corollary}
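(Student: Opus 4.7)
The strategy is to combine the two main functors of the paper to move indecomposables and the relevant Gorenstein/self-injective conditions between the three algebras. Write $\CG=\Gp\mbox{-}\La$, $A=\rm{Aus}(\underline{\rm{mod}}\mbox{-}\La)$ and $B=\rm{Aus}(\underline{\CG})$. A crucial preliminary observation is that, under the hypotheses of the statement, the Gorenstein/self-injective parts of $(1)$, $(2)$, $(3)$ are essentially automatic: $T_2(\La)$ is Gorenstein since $\La$ is (a standard fact about triangular matrix algebras); $B$ is self-injective because $\underline{\CG}$ is a Hom-finite Krull--Schmidt triangulated category with an additive generator, and the endomorphism algebra of such a generator is self-injective (as already cited in the paper right after the earlier corollary); and $A$ is Gorenstein because, applying Corollary \ref{N-the syzygy} to $\CX=\mmod\La$ together with Theorem \ref{Classification of Gorenstein projective functors}, the $3n$-th syzygy of any functor in $\mmod\underline{\rm{mod}}\mbox{-}\La$ has a projective resolution with Gorenstein projective terms whenever $\La$ is $n$-Gorenstein, so every $A$-module has Gorenstein projective dimension at most $3n$. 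The proof therefore reduces to a single chain of equivalences between the three finiteness conditions.

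For $(1)\Leftrightarrow(2)$: Lemma \ref{Gorenproj charecte} identifies $\Gp\mbox{-}T_2(\La)$ with $\CS_{\CG}(\La)$, and the Morita-type equivalence $\mmod\underline{\CG}\simeq\mmod B$ from Subsection \ref{simple} identifies $\mmod\underline{\CG}$ with $\mmod B$. Since $\CG$ is a quasi-resolving subcategory of $\mmod\La$ of finite representation type (inherited from $\La$), Theorem \ref{Thefirst}$(2)$ applied to $\mathcal{C}=\CG$ gives exactly that $\CS_{\CG}(\La)$ is of finite representation type iff $B$ is. Combined with the automatic Gorenstein/self-injective parts above, this yields $(1)\Leftrightarrow(2)$.

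For $(2)\Leftrightarrow(3)$: Corollary \ref{extension functor stable auslander algebra} supplies the fully faithful, exact, almost dense functor ${}_{\CG}\hat{\varUpsilon}_{\La}\colon\mmod B\to\Gp\mbox{-}A$ whose essential image omits only the finitely many representables $(-,\underline{X})$ corresponding to non-Gorenstein-projective indecomposables $X$ of $\mmod\La$; the latter set is finite since $\La$ is representation-finite. Counting indecomposables on both sides gives that $\mmod B$ has finitely many indecomposables iff $\Gp\mbox{-}A$ does, i.e.\ $B$ is representation-finite iff $A$ is $\rm{CM}$-finite. Combined once more with the automatic self-injectivity of $B$ and Gorenstein-ness of $A$ from the first paragraph, this yields $(2)\Leftrightarrow(3)$.

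The main obstacle is the automatic Gorenstein-ness of $A$: one must pass from the non-minimal projective resolution produced by Corollary \ref{N-the syzygy} to the minimal one (using that minimal resolutions appear as summands and that $\Gp\mbox{-}\La$ is closed under summands) in order to apply the characterization in Theorem \ref{Classification of Gorenstein projective functors} and extract a uniform bound $\rm{Gpd}_A\leq 3n$. Everything else reduces mechanically to the two equivalences of Theorem \ref{Thefirst} and Corollary \ref{extension functor stable auslander algebra}, together with the triangulated structure of $\underline{\CG}$ which makes $B$ self-injective.
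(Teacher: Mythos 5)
Your proposal is correct and follows essentially the same route as the paper: the paper's proof is just the citation list ``Theorem \ref{Thefirst}, \cite[Theorem 3.3.3]{EHS} and Freyd's theorem for $(1)\Leftrightarrow(2)$; Corollary \ref{extension functor stable auslander algebra} and \cite[Theorem 3.11]{MT} for $(2)\Leftrightarrow(3)$'', which is exactly your reduction to the two finiteness equivalences after observing that the Gorenstein/self-injective clauses are automatic. The only cosmetic difference is that you re-derive the Gorenstein-ness of $A$ from Corollary \ref{N-the syzygy} and Theorem \ref{Classification of Gorenstein projective functors} instead of quoting \cite[Theorem 3.11]{MT} directly (which is what the ``if'' direction of that theorem rests on anyway), so the content is the same.
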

\begin{proof}
The equivalence $(1)\Leftrightarrow (2) $	follows from  Theorem \ref{Thefirst}, \cite[Theorem 3.3.3]{EHS} and a classical result from \cite[Theorem 3.1]{F}, which says that for a triangulated category $\CT$,  $\mmod \CT$ is a Frobenius (abelian) category. The equivalence $(2)\Leftrightarrow (3)$ follows from Corollary \ref{extension functor stable auslander algebra} and \cite[Theorem 3.11]{MT}.
\end{proof}

\section{Almost split sequences}\label{Section 5}
In this section we will show how the functors $\Psi_{\CX}$ and ${}_{\underline{\CY}}\varUpsilon_{\underline{\CX}}$ which are introduced in the previous sections can be used to transfer the almost split sequences. First, we need to recall  the  almost split sequences for the subcategories as follows. Let  $\mathcal{C}$ be a 
 full extension-closed  subcategory of an abelian category $\CA$.  Let $\delta: 0 \rt A \st{f} \rt B\st{g} \rt C\rt 0$ be a short exact sequence in $\CA$. We call $\delta$ a short exact sequence in $\mathcal{C}$ if all whose terms belong to $\mathcal{C}$. A morphism $f:X \rt Y$ in $\mathcal{C}$  is called a {\it proper monomorphism}  if it is a monomorphism in $\CA$ and its cokernel in $\CA$ belongs to $\mathcal{C}$. Dually, one can define a {\it proper epimorphism} in $\mathcal{C}$.  An object $X$ in $\mathcal{C}$ is  called {\it $\Ext$-injective}
if every proper monomorphism $f:X\rt Y$ in $\mathcal{C}$ is  a section; and {\it $\Ext$-projective} 
if every proper epimorphism $g:Z\rt X$ in $\mathcal{C}$ is a retraction. The subcategory $\mathcal{C}$ has enough projectives, if for every object $C$ in $\mathcal{C}$ there is a proper epimorphism $P \rt C$ with $\Ext$-projective object $P$. Dually, one can define the notion of having enough injectives. Since $\mathcal{C}$ is closed under extensions, it gets naturally an exact structure from $\CA$. The above defined notions may also be viewed in the terms of the induced exact structure on $\mathcal{C}$. 

Next, we recall from \cite{AR1, AS} some terminology and facts for the Auslander-Reiten theory.
One says that $f$ in $\mathcal{C}$ is {\it right almost split}  if $f:X\rt Y$ is not a retraction and every non-retraction morphism $g:M\rt Y$ in $\mathcal{C}$ factors through $f$; and {\it minimal right almost split} if $f$ is right minimal and right almost split. In a dual manner, one defines $f$
to be (minimal) left almost split. A short exact sequence 
$$\delta: 0 \rt X \st{f}\rt Y \st{g}\rt Z\rt 0$$ in $\mathcal{C}$
is called {\it almost split}  if $f$ is minimal left almost split and $g$
is minimal right almost split. Since $\delta$ is unique up to isomorphism for $X$ and $Z$, we may write $X=\tau_{\mathcal{C}}Z$ and $Z=\tau^{-1}_{\mathcal{C}}X.$ 
We shall say that $\mathcal{C}$ has {\it right  almost  split}   sequences
if every indecomposable object is either $\Ext$-projective or the ending term of an almost split sequence; dually, $\mathcal{C}$ has {\it left almost split} sequences
if every indecomposable object is either $\Ext$-injective or the starting term of an almost split sequence. We call  $\mathcal{C}$ has {\it almost  split}  sequences
if it has both left and right almost split sequences.\\
 From now on, let $\CX $ be a contravariantly finite and resolving subcategory  of $\mmod \La.$ 

 \subsection{Exchange  between the almost split sequences in $\CS_{\CX}(\La)$ and $\mmod \underline{\CX}$} 
 As we have observed in the beginning of Section \ref{Section 4},  $\mmod \CX$ and $\mmod \underline{\CX}$  have projective covers, and are  Krull-Schmidt categories. 
 
Consider $F \in \mmod \underline{\CX}$ and since $\mmod \CX$ has projective covers,  we can construct a minimal projective resolution for $F$ in $\mmod \CX$. Fix a   minimal projective resolution in $\mmod \CX$ for $F$ as the following
$$\eta_F: 0 \rt (-, A_{F})\st{(-, s_{F})} \rt (-, B_{F})\st{(-, r_{F})}\rt (-, C_{F})\rt F\rt 0.$$
In this way, we have associated  to any $F$ in $ \mmod \underline{\CX}$ with the object $(A_{F}\st{s_{F}}\rt B_{F})$ in $\CS_{\CX}(\La).$

We begin with the following  preliminary result.
\begin{lemma}\label{indecompsable functor and monomorphosms}
$F$ is an indecomposable functor in $\mmod \underline{\CX}$ if and only if $s_{F}$ so is in $\CS_{\CX}(\La).$ 
\end{lemma}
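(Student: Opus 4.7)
The plan rests on the identity $\Psi_{\CX}(A_F \st{s_F}\rt B_F) \cong F$ in $\mmod \underline{\CX}$. To establish it I would rerun the density argument from the proof of Theorem \ref{Thefirst}: since $F \in \mmod \underline{\CX}$ vanishes on the projective $\La \in \CX$, the third differential of $\eta_F$ descends from a genuine epimorphism $r_F : B_F \twoheadrightarrow C_F$ in $\mmod \La$ with kernel $A_F$, so $C_F \cong \text{Cok}(s_F)$, and Construction \ref{FirstCoonstr} then recovers $F$ from $(A_F \st{s_F}\rt B_F)$.

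For the direction ``$s_F$ indecomposable $\Rightarrow F$ indecomposable'', I would exploit that $\mmod \CX$ is an abelian Krull--Schmidt category with projective covers (cf.\ Subsection \ref{simple}), so minimal projective resolutions are unique up to isomorphism and distribute over direct sums. Accordingly, a splitting $F \cong F_1 \oplus F_2$ in $\mmod \underline{\CX}$ lifts to $\eta_F \cong \eta_{F_1} \oplus \eta_{F_2}$ and hence to $(A_F \st{s_F}\rt B_F) \cong (A_{F_1}\st{s_{F_1}}\rt B_{F_1}) \oplus (A_{F_2}\st{s_{F_2}}\rt B_{F_2})$. Indecomposability of the left-hand side then forces one summand to be the zero object of $\CS_{\CX}(\La)$, say $A_{F_2} = B_{F_2} = 0$; then $F_2 \cong (-, C_{F_2})$ as a functor in $\mmod \CX$, and evaluating at $\La$ while using that $F_2$ vanishes on projectives gives $C_{F_2} = 0$, hence $F_2 = 0$.

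For the converse ``$F$ indecomposable $\Rightarrow s_F$ indecomposable'', a splitting $(A_F \st{s_F}\rt B_F) = M_1 \oplus M_2$ yields $F \cong \Psi_\CX(M_1) \oplus \Psi_\CX(M_2)$, so indecomposability of $F$ forces $\Psi_\CX(M_2) = 0$ (say); the kernel analysis inside the proof of Theorem \ref{Thefirst} then places $M_2 \in \CV$, i.e., $M_2$ is a direct sum of copies of $(X \st{1}\rt X)$ and $(0 \rt X)$ for various $X \in \CX$. The main obstacle---and the only genuinely delicate step of the argument---is ruling out each such summand via minimality of $\eta_F$. A summand $(X \st{1}\rt X)$ of $(A_F \st{s_F}\rt B_F)$ leaves $C_F$ unchanged but inserts a contractible identity subcomplex $(-, X) \st{1}\rt (-, X)$ in the first two positions of $\eta_F$; truncating it produces a strictly smaller projective resolution of $F$ in $\mmod \CX$. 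A summand $(0 \rt X)$ makes $X$ a direct summand of $C_F = \text{Cok}(s_F)$ on which the augmentation $(-, C_F) \twoheadrightarrow F$ vanishes, so $(-, C_F)$ strictly dominates the projective cover of $F$. Either case contradicts minimality of $\eta_F$, so $M_2 = 0$ and $(A_F \st{s_F}\rt B_F)$ is indecomposable.
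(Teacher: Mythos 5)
Your proof is correct and follows essentially the same route as the paper: both directions rest on $\Psi_{\CX}(s_F)=F$, the uniqueness of minimal projective resolutions in the Krull--Schmidt category $\mmod \CX$, and the observation that a summand of $s_F$ killed by $\Psi_{\CX}$ must lie in $\CV$ and would contradict minimality of $\eta_F$. You merely spell out two details the paper leaves implicit (ruling out a zero-object summand $(0\rt 0)$ by evaluating at $\La$, and the explicit contractible subcomplexes violating minimality), which is fine.
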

\begin{proof}
	Assume $F$ is indecomposable. By definition of the functor $\Psi_{\CX}$, see Construction \ref{FirstCoonstr}, $\Psi_{\CX}(s_{F})=F$. If $s_{F}$ is not indecomposable, then it must have an indecomposable direct summand of the form   $(X\st{1}\rt X) $ and $(0 \rt X)$. But this means that $\eta_{F}$ is not a minimal projective resolution, a contradiction. To prove the converse, if $F$ is not indecomposable, then there  is  a decomposition $F=F_1\oplus F_2$, $F_1, F_2\neq 0$.  By the uniqueness of the minimal projective resolutions $\eta_{F}\simeq \eta_{F_1}\oplus \eta_{F_2}$,  we get $s_{F}\simeq s_{F_1}\oplus s_{F_2}$, a contradiction. We are done.
	\end{proof}

Note that when  $\CX$ is closed under extensions, this follows $\CS_{\CX}(\La)$ so is in $H(\La)$. Hence  we can talk about the existence of almost split sequences in $\CS_{\CX}(\La)$.

The following lemma gives the structure  of $\Ext$-projective (injective)  indecomposable objects in $\CS_{\CX}(\La)$ which will be helpful later. We need some perpetrations to prove it.
For a given subcategory $\CX$ of $\mmod \La$, we  denote by $\CF_{\CX}(\La)$ the subcategory of $H(\La)$  consisting of all  morphisms $(A\st{f}\rt B)$ satisfying: $(i)$ $f$ is an epimorphism, and $(ii)$ $A, B$ and $\text{Ker} \ f$ belong to $\CX$.

The restrictions of the kernel and cokernel functors (see \cite[Section 1]{RS2}) clearly induce a pair of inverse equivalences between $\CS_{\CX}(\La)$ and $\CF_{\CX}(\La)$.

\begin{lemma}\label{projective objects in monomorphism}
 The following statements occur.

\begin{itemize}
	\item [$(1)$]
  A monomorphism $(A\st{f}\rt B)$ is an  $\Ext$-projective indecomposable object in  $\CS_{\CX}(\La)$ if and only if it is isomorphic to either $(P\st{1}\rt P)$ or $(0 \rt P)$ with  projective indecomposable module  $P$ in $\mmod \La.$
 \item [$(2)$] Assume $\mathcal{X}$ has enough injectives. A monomorphism $(A\st{f}\rt B)$ is an  $\Ext$-injective indecomposable object in $\CS_{\CX}(\La)$ if and only if it is isomorphic to either $(I\st{1}\rt I)$ or $(0 \rt I)$ with $\Ext$-injective  indecomposable  module $I$ in $\CX$.
\end{itemize}
 
\end{lemma}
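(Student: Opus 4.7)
The plan is to handle both parts by the same paradigm: exhibit the claimed decomposition as arising from the splitting of a distinguished short exact sequence in $\CS_{\CX}(\La)$ whose middle term is a direct sum of the listed indecomposables, and use $\Ext$-projectivity or $\Ext$-injectivity of the object under scrutiny to force the sequence to split. The ``if'' directions in both parts amount to verifying the $\Ext$-vanishing directly; the ``only if'' directions follow from Krull-Schmidt applied to the split summand.

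For (1), the displayed objects $(P\st{1}\rt P)$ and $(0\rt P)$ are precisely the indecomposable projective objects of $\mmod T_2(\La)$; since $\prj \La\subseteq \CX$, they lie in $\CS_{\CX}(\La)$ and are consequently $\Ext$-projective in this extension-closed subcategory. Conversely, given an $\Ext$-projective indecomposable $(A\st{f}\rt B)$ with cokernel $C\in \CX$, I would assemble the projective cover $\pi\colon (P_A\st{1}\rt P_A)\oplus (0\rt P_C)\twoheadrightarrow (A\st{f}\rt B)$ in $H(\La)$ from the projective covers $P_A\rt A$ and $P_C\rt C$ via the horseshoe lemma. The kernel of $\pi$ is the object $(\Omega_{\La}(A)\rt \Omega_{\La}(B))$, whose own cokernel is $\Omega_{\La}(C)$; since $\CX$ is resolving these three syzygies lie in $\CX$, placing the kernel inside $\CS_{\CX}(\La)$. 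The resulting short exact sequence then splits by $\Ext$-projectivity, and Krull-Schmidt identifies $(A\st{f}\rt B)$ with an indecomposable summand of the form $(P\st{1}\rt P)$ or $(0\rt P)$.

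For (2), the ``if'' direction is a direct verification. For $(I\st{1}\rt I)$, an enveloping short exact sequence in $\CS_{\CX}(\La)$ restricts at each vertex to a proper short exact sequence in $\CX$, so $\Ext$-injectivity of $I$ splits the vertex-$1$ sequence via some $r_1\colon A\rt I$, and the same property extends $r_1$ to $r_2\colon B\rt I$ along the monomorphism $A\hookrightarrow B$ with cokernel in $\CX$; the pair $(r_1,r_2)$ is the needed retraction. For $(0\rt I)$, one first obtains any retraction $r^0\colon B\rt I$ of the embedding (again by $\Ext$-injectivity of $I$), then modifies it by a morphism factoring through the vertex-$2$ cokernel $B''$ so that the adjusted retraction kills the horizontal map of the middle term; the required modification exists by $\Ext$-injectivity of $I$ applied to the proper sequence $0\rt A\rt B''\rt \text{Cok}(g'')\rt 0$, whose right term is in $\CX$ because $(A\st{g''}\rt B'')$ belongs to $\CS_{\CX}(\La)$. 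For the converse, I would mirror the construction of (1): choose proper monomorphisms $A\hookrightarrow I_A$ and $C\hookrightarrow I_C$ into $\Ext$-injectives of $\CX$ (which exist by the enough-injectives hypothesis), extend $A\hookrightarrow I_A$ along $f$ to $\phi_2\colon B\rt I_A$ using $\Ext$-injectivity of $I_A$, and let $\psi$ be the composition $B\twoheadrightarrow C\hookrightarrow I_C$. Then $(\phi_2,\psi)\colon B\rt I_A\oplus I_C$ is monic, and a snake-lemma calculation exhibits the cokernel at vertex $2$ as an extension $0\rt L_A\rt M\rt L_C\rt 0$ (with $L_A=I_A/A$ and $L_C=I_C/C$), placing $M$ in $\CX$ by closure under extensions and the whole cokernel object in $\CS_{\CX}(\La)$. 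The short exact sequence
\[
0\rt (A\st{f}\rt B)\rt (I_A\st{1}\rt I_A)\oplus (0\rt I_C)\rt (L_A\rt M)\rt 0
\]
then splits by $\Ext$-injectivity, and Krull-Schmidt concludes.

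The main obstacle I anticipate is the converse of (2), where building the embedding into $(I_A\st{1}\rt I_A)\oplus (0\rt I_C)$ simultaneously demands the enough-injectives hypothesis (to choose $I_A$ and $I_C$), $\Ext$-injectivity of $I_A$ (to extend $A\hookrightarrow I_A$ along $f$), and closure of $\CX$ under extensions (to verify that the cokernel term of the embedding lies in $\CX$); the snake-lemma identification of this cokernel is the most delicate verification and is the step most easily botched.
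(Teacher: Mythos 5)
Your proof is correct. For part (1) it is essentially the paper's own argument: the paper constructs exactly this proper epimorphism from $(P\st{1}\rt P)\oplus(0\rt Q)$ out of the projective covers of $A$ and $\mathrm{Cok}\, f$ and invokes $\Ext$-projectivity; your only (harmless) imprecision is that the vertex-$2$ term of the kernel is $\Omega_{\La}(B)$ plus a possible projective direct summand rather than $\Omega_{\La}(B)$ itself, which changes nothing since $\CX$ is resolving. For part (2) your route genuinely differs from the paper's. The paper does not argue inside $\CS_{\CX}(\La)$ at all: it asserts the dual characterization of $\Ext$-injectives in the epimorphism category $\CF_{\CX}(\La)$ (``in the similar way of $(1)$'') and transports it through the kernel/cokernel equivalence $\CS_{\CX}(\La)\simeq\CF_{\CX}(\La)$, under which $(I\rt 0)$ and $(I\st{1}\rt I)$ correspond to $(I\st{1}\rt I)$ and $(0\rt I)$. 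You instead build the enveloping proper monomorphism $(A\st{f}\rt B)\rt (I_A\st{1}\rt I_A)\oplus(0\rt I_C)$ directly in $\CS_{\CX}(\La)$ and check the ``if'' direction by hand. Your approach costs more explicit verification --- the pushout arguments extending maps along proper monomorphisms, and the snake-lemma identification of the cokernel as $(L_A\rt M)$ with $M\in\CX$ by extension-closure --- but it is self-contained and makes visible precisely where the enough-injectives hypothesis, the $\Ext$-injectivity of $I_A$, and the extension-closure of $\CX$ enter; the paper's version is shorter on the page but buries the same content in the unproved dual statement and in the properties of the kernel functor. Both arguments conclude identically via Krull--Schmidt.
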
 
\begin{proof}
	$(1)$ Assume $(A\st{f}\rt B)$ is an $\Ext$-projective indecomposable object in the  category $\CS_{\CX}(\La)$. Let $\pi:P\rt A$ and $\pi':Q\rt \text{Cok} \ f$ be the projective covers of $A$ and $\text{Cok} \ f$, respectively. There is a morphism $e:Q \rt B$ such that $de=\pi'$, where $d:B\rt \text{Cok} \ f$ is the canonical epimorphism. We have a proper epimorphism in the following form
	{\footnotesize \[ \xymatrix@R-2pc {  & P\ar[dd]^{l}  & A\ar[dd]~  \\   &  _{\ \ \ \ \ \  \ \ \  }\ar[r]^{\pi}_{[e~~f
				\circ\pi]}  \ar[r]&_{\ \ \ \ \ }{f}   \\ & Q \oplus P & B }\]}
	in $\CS_{\La}(\CX)$, where $l=[0~~1]^t$. The above proper epimorphism in $\CS_{\CX}(\La)$ follows $f$ must be  in one of the forms imposed in the statement. It is also easy to see that the  indecomposable objects  stated in   the statements are $\Ext$-projective in $\CS_{\CX}(\La)$. 
	
	$(2)$ In the similar way of $(1)$, one can show that: An object $(A\st{f}\rt B)$ in $\CF_{\CX}(\La)$ is an indecomposable $\Ext$-injective in  $\CF_{\CX}(\La)$ if and only if it is isomorphic to either $(I\st{1}\rt I)$ or $(I \rt 0)$ with  $\Ext$-injective indecomposable module $I$ in $\CX$. Applying  the kernel functor, defined in the above, and in view of the  characterization of $\Ext$-injective objects in $\CF_{\CX}(\La)$ yield the result.
\end{proof}
\begin{setup}\label{Setup1}
	Let  $\CX \subseteq \mmod \La$ be  contravariantly 
	finite and resolving.  Further, assume $\CX$ has enough injectives and $\CS_{\CX}(\La)$ has almost split sequences.
\end{setup}
In the following construction we shall explain how  the almost split sequences in $\mmod \underline{\CX}$ can be computed by the ones  in $\CS_{\CX}(\La)$. 
\begin{construction}\label{Secound constrution} 
	Let $\CX$ be as in Setup \ref{Setup1}.
 Let $H$ be a  non-projective indecomposable functor in $\mmod \underline{\CX}$. Then by Lemma \ref{indecompsable functor and monomorphosms}, $s_{H}$ so is indecomposable in $\CS_{\CX}(\La)$. Also, it is not a projective object in the exact category $\CS_{\CX}(\La)$, see Lemma \ref{projective objects in monomorphism}. Hence by our assumption there is an almost split sequence in $\CS_{\CX}(\La)$ ending at $s_{H}$, namely, 
	{\footnotesize  \[ \xymatrix@R-2pc {  &  ~ X_1\ar[dd]^{\rm{d}}~   & Z_1\ar[dd]^{h}~  & A_{H}\ar[dd]^{s_{H}} \\ \epsilon: \ \ 0 \ar[r] &  _{ \ \ \ \ } \ar[r]^{\phi_1}_{\phi_2}  &_{\ \ \ \ \ } \ar[r]^{\psi_1}_{\psi_2} _{\ \ \ \ \ }& _{\ \ \ \ \ \ }\ar[r] & 0 \\ & X_2 & Z_2 &B_{H} }\]}
		By expanding the above diagram in $\mmod \La$ we get the following commutative diagram
	$$\xymatrix{& 0 \ar[d] & 0 \ar[d] & 0 \ar[d]&(*) &\\
		0 \ar[r] & X_1 \ar[d]^{\rm{d}} \ar[r]^{\phi_1} & Z_1 \ar[d]^{h}
		\ar[r]^{\psi_1} & A_{H} \ar[d]^{s_{H}} \ar[r] & 0\\
		0 \ar[r] & X_2\ar[d] \ar[r]^{\phi_2} & Z_2
		\ar[d] \ar[r]^{\psi_2} & B_{H} \ar[d] \ar[r] & 0\\
		0 \ar[r] & \text{Cok}\ d \ar[d] \ar[r]^{\mu_1} & \text{Cok} \ h
		\ar[d] \ar[r]^{\mu_2} & C_{H} \ar[d] \ar[r] & 0\\
		& 0  & 0  & 0 & }
	$$
	Since the morphisms $(\text{Id}_{A_H},~ \text{Id}_{A_{H}}):(A_H\st{1}\rt A_H)\rt (A_H\st{s_H}\rt B_H)$ and $(0,~ \text{Id}_{B_H}):(0\rt B_H)\rt (A_H\st{s_H}\rt B_H)$ are non-retraction hence factor through $(\psi_1~~\psi_2)$. Thus, both $\psi_1$ and $\psi_2$ are split. Consequently, $Z_2\simeq X_2\oplus B_{H}$ and $Z_1\simeq X_1\oplus A_{H}.$  Now we show that $\mu_2$ is also split epimorphism. To this do, consider the morphism $(\sigma_1,~~\sigma_2):(\Omega_{\La}(C_{H})\rt P)\rt (A_{H}\st{s_{H}}\rt B_{H})$	obtaining by the following commutative diagram
	$$\xymatrix{
		0 \ar[r] &\Omega_{\La}(\text{Cok}\ s_{H}) \ar[d]^{\sigma_1} \ar[r] & P \ar[d]^{\sigma_2}
		\ar[r] &C_{H} \ar@{=}[d]\ar[r] & 0&\\
		0 \ar[r] & A_{H}  \ar[r]^{s_{H}} & B_{H}
		\ar[r] & C_{H}\ar[r] &0.& \\ 	} 	 $$
	Here, as usual, $P$ is the projective cover of $C_{H}$. But $(\sigma_1,~~\sigma_2)$ is not a retraction, otherwise it leads  to  $H\simeq (-, \underline{D})$, where $D$ is a direct summand of $C_{H}$, a contradiction. Hence $(\sigma_1,~~\sigma_2)$ factors through $(\psi_1,~~\psi_2)$, and consequently $\text{Id}_{C_H}$ factors through $\mu_2$. But this means that $\mu_2$ is a split epimorphism, so the result follows. Applying the Yoneda functor on the diagram $(*)$ and using this observation, as already proved,  the rows are split, then we have the following commutative diagram (applying the isomorphisms ($Z_1\simeq X_1\oplus A_H, Z_2\simeq X_2\oplus B_H, \text{and}  \ \text{Cok} \ h\simeq \text{Cok} \ d \oplus C_H$) due to  the split epimorphisms $\psi_1, \psi_2, \mu_2$, and also with abuse of the notation we denote again by $h$ the corresponding morphism).
	$$\xymatrix{& 0 \ar[d] & 0 \ar[d] & 0 \ar[d]& 0\ar[d]&(**) &\\
		0 \ar[r] & (-,X_1) \ar[d] \ar[r]^{(-, d)} & (-, X_2) \ar[d]
		\ar[r] &(-, \text{Cok}\ d) \ar[d] \ar[r] & F\ar[r]\ar[d]^f&0\\
		0 \ar[r] & (-,X_1\oplus A_{H})  \ar[r]^{(-, h)}\ar[d] & (-, X_2\oplus B_{H})\ar[d]
		\ar[r] & (-,  \text{Cok}\ d\oplus C_H) \ar[r]\ar[d] & G\ar[r]\ar[d]^g& 0\\0\ar[r] &(-, A_{H})\ar[r]^{(-, s_{H})}\ar[d]&(-,  B_{H})\ar[r]\ar[d]&(-, C_{H})\ar[d] \ar[r]&H \ar[r]\ar[d]&0 &\\ &0&0&0&0&&
	} 	 $$
Note that  a renaming of some notation in the above digram  will be given in the last of this construction. 
	We have obtained in the rightmost of the above diagram a  short exact sequence, denote by $S_{H}$, in $\mmod \underline{\CX}$. The first term $F$ in the short exact sequence must be non-zero. Otherwise, because  of indecomposability of $d$ as an object in $\CS_{\CX}(\La)$, since it is the last term of an almost split sequence,  then it would be isomorphic to either  $(0 \rt X)$ or $(X \st{1}\rt X)$ for some  indecomposable $X$ in $\CX.$  Assume the case $d\simeq (X\st{1}\rt X)$ happens. We then have $G\simeq H$. Hence the middle row of $(**)$ gives a projective resolution of $G$, on the other hand, the last row gives the minimal one. Because of being minimal yields  the deleted projective resolution ${\mathbf P}_G$ of $G$, provided in the middle  row of $(**)$, is a direct sum of the deleted minimal projective ${\mathbf P}_{H}$ (in the last row) of  $H$ with some contractible complexes. Namely, $\mathbf{P}_G$ in the category of complexes is isomorphic to such a  decomposition
$$\mathbf{P}_{H}\oplus \mathbf{X}_1\oplus\mathbf{X}_2$$  where for each $i=1, 2$	
	\[ \mathbf{X}_i: \cdots 0 \rt 0 \rt (-, A_i) \st{(-, \text{Id}_{A_i})}\lrt (-, A_i) \rt 0 \rt \cdots, \]	
	for some $A_i $ in $\CX$, where the rightmost term $(-, A_i)$ is located in the degree $1-i$ and  the term $(-, C_{H})$ in $\mathbf{P}_{H}$ is at degree $0$. Since $\text{Cok} \ d=0$ then $A_1$ must be zero. So  by the above decomposition we can deduce the decomposition $h\simeq s_{H}\oplus (A_2 \st{1}\rt A_2)$ in $\CS_{\CX}(\La)$. Comparing domain or codomain of the monomorphisms in the  both sides of the isomorphism, for instance codomain, we get $X_2\oplus B_{H}\simeq B_{H} \oplus A_2$, which implies $A_2\simeq X_2\simeq X.$  Hence $h\simeq s_F\oplus d$. So the middle term  of the short exact sequence  $\epsilon$  is a direct sum of whose ending terms. This means that $\epsilon$ is split, a contradiction. The similar proof works for the case $d\simeq (0 \rt X)$. Hence $F\neq 0.$ Sine $d$ is an indecomposable object in $\CS_{\CX}(\La)$ and $F\neq 0$, hence we  conclude the first row in the digram $(**)$ acts as a minimal projective resolution of $F$ in $\mmod \CX$. So by our convention we identify it by the following fixed minimal projective resolution of $F$
$$ 0 \rt (-, A_F)\st{(-, s_F)} \rt (-, B_F)\st{(-, r_F)}\rt (-, C_F)\rt F\rt 0.$$ 
Therefore,  based on the above facts we can  assume $\epsilon$ has  the following form 
{\footnotesize  \[ \xymatrix@R-2pc {  &_{\ }   A_F\ar[dd]^>>>>{s_F}   &_{ \ \ \ \  \  } A_F\oplus A_{H}\ar[dd]^>>>>{s_{F*{H}}}~  & A_{H}\ar[dd]^>>>>{s_{H}} \\ 0 \ar[r] &  _{ \ \ \ \ } \ar[r]^{[1~~0]^t}_{[1~~0]^t}  &_{\ \ \ \ \ } \ar[r]^{[0~~1]}_{[0~~1]} _{\ \ \ \ \ }&  _{\ \ \ \ \ }\ar[r] & 0 \\ & B_F & B_F\oplus B_{H} & B_{H} }\]}
where $s_{F*{H}}$ is uniquely determined by  $H$ or $F$.
\end{construction}  
\begin{lemma}
Keep in mind all notations used in Construction  \ref{Secound constrution}. The short exact sequence $S_{H}$ is not split.
\end{lemma}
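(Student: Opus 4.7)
The plan is a proof by contradiction that exploits the almost split nature of $\epsilon$ together with the locality of $\End_{\CS_{\CX}(\La)}(s_{H})$.

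Suppose toward a contradiction that $S_{H}$ is a split short exact sequence. Then the epimorphism $g:G\rt H$ admits a section $u:H\rt G$. Viewing $H$ and $G$ inside $\mmod \CX$ through the fully faithful embedding $\pi^{*}:\mmod \underline{\CX}\rt \mmod \CX$, both functors have the projective resolutions displayed in the last and middle rows of the diagram $(**)$, and these resolutions arise by applying the Yoneda functor to the admissible monomorphisms $s_{H}$ and $h$ respectively. Hence, by the comparison theorem for projective resolutions in the abelian category $\mmod \CX$, the morphism $u$ lifts to a chain map between these two resolutions. The restriction of this chain map to its first two terms comes, via Yoneda's lemma, from a morphism $\tilde{u}:s_{H}\rt h$ in $\CS_{\CX}(\La)$, and by the very construction of $\Psi_{\CX}$ on morphisms one verifies that $\Psi_{\CX}(\tilde{u})=u$.

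Next I consider the endomorphism $\rho:=(\psi_1,\psi_2)\circ \tilde{u}\in \End_{\CS_{\CX}(\La)}(s_{H})$. Since $\Psi_{\CX}((\psi_1,\psi_2))=g$, functoriality of $\Psi_{\CX}$ yields $\Psi_{\CX}(\rho)=g\circ u=\text{Id}_{H}$. By Lemma \ref{indecompsable functor and monomorphosms}, $s_{H}$ is indecomposable in the Krull--Schmidt category $\CS_{\CX}(\La)$, so its endomorphism ring is a local Artin $k$-algebra; in particular, every endomorphism is either an isomorphism or nilpotent. If $\rho$ were nilpotent, say $\rho^{n}=0$, then applying $\Psi_{\CX}$ would give $\text{Id}_{H}=\Psi_{\CX}(\rho)^{n}=0$ and hence $H=0$, contradicting the standing hypothesis that $H$ is a non-projective indecomposable functor. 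Therefore $\rho$ is an automorphism of $s_{H}$, and $\tilde{u}\circ \rho^{-1}$ is a section of $(\psi_1,\psi_2)$ in $\CS_{\CX}(\La)$. This forces $\epsilon$ to split, contradicting its being almost split, and settles the claim.

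The main delicate point is the equality $\Psi_{\CX}(\tilde{u})=u$: one has to check that the map of cokernels induced by any chain-level lift of $u$ is precisely $u$, which is built into the definition of $\Psi_{\CX}$ on morphisms and is independent of the choice of lift since two such lifts differ by a chain homotopy and thus induce the same map on cokernels. Once this is in place, the rest of the argument is purely formal: the locality of $\End_{\CS_{\CX}(\La)}(s_{H})$ together with the almost split property of $\epsilon$ does all of the work.
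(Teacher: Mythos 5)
Your proof is correct, but it takes a genuinely different route from the paper's. You work at the level of morphisms: you lift the hypothetical section $u$ of $g$ back through $\Psi_{\CX}$ to a morphism $\tilde{u}:s_{H}\rt s_{F*H}$ in $\CS_{\CX}(\La)$, and then use locality of $\End_{\CS_{\CX}(\La)}(s_{H})$ (Krull--Schmidt, which holds since $\CX$ is closed under direct summands) together with $\Psi_{\CX}(\rho)=\text{Id}_{H}\neq 0$ to rule out nilpotence of $\rho=(\psi_1,\psi_2)\circ\tilde{u}$, thereby producing a section of the right-hand map of $\epsilon$ and contradicting that $\epsilon$ is almost split. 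The paper instead argues at the level of objects: if $S_{H}$ split then $G\simeq F\oplus H$, so by uniqueness of minimal projective resolutions the middle row of $(**)$ contains the direct sum of the minimal resolutions of $F$ and $H$ as a direct summand; translating back to $\CS_{\CX}(\La)$ yields $s_{F*H}\simeq s_F\oplus s_{H}\oplus(0\rt X)\oplus(Y\st{1}\rt Y)$, and a Jordan--H\"{o}lder length count over $T_2(\La)$ forces $X=Y=0$, whence the middle term of $\epsilon$ is the direct sum of its end terms and $\epsilon$ splits. Your version buys a cleaner finish (no length count and no appeal to ``middle term isomorphic to the sum of the end terms implies split''), at the price of the one verification you correctly isolate, namely $\Psi_{\CX}(\tilde{u})=u$; note that this is even easier than you suggest, since the third component of any chain map extending $((-,u_1),(-,u_2))$ is forced rather than merely well defined up to homotopy: the induced map on cokernels of the monomorphisms is uniquely determined because $B_{H}\rt C_{H}$ is an epimorphism, so it must agree with the one occurring in the lift of $u$.
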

\begin{proof}
	The proof is rather similar to the proof given for showing $F\neq 0$ in the above  construction.
	Assume to the contrary that $S_{H}$ is split. Then $G\simeq F \oplus H.$ According to this fact the minimal projective resolution of $G$ in $\mmod \CX$ is a direct sum of the minimal projective resolutions of $F$ and $H$ given in the first and the last row of the digram $(**)$ in Construction \ref{Secound constrution}, i.e.,
		$$0 \rt (-, A_F\oplus A_{H})\st{(-, s_F\oplus s_{H})}\lrt(-, B_F\oplus B_{H})\st{(-, r_F\oplus r_{H})}\lrt (-, C_F\oplus C_{H})\rt G \rt  0. $$ 
	On the other hand, the sequence in the middle row of $(**)$ is a projective resolution of $G$ (not necessarily  to be minimal). Hence due to the property  of being minimal projective resolution, the deleted minimal  projective resolution of $G$, given in the above,  is a direct summand of the one given in the middle of the diagram $(**)$ in the construction. By translating this fact  in  $\CS_{\CX}(\La)$, we get this decomposition in $\CS_{\CX}(\La)$,  $s_{F*{H}}\simeq s_F\oplus s_{H}\oplus(0 \rt X)\oplus (Y\st{1}\rt Y)$ for some modules $X, Y$ in $\CX$. But $X$ and $Y$ must be zero, otherwise the length of $s_{F*{H}}$, meaning the Jordan-Holder length by considering it as a module over $T_2(\La)$, is greater than the sum of lengths of $s_F$ and $s_H$, that is a contradiction. Summing up, the middle term in $\epsilon$ is a direct sum of whose ending terms, so it means that $\epsilon$ is split and then a contradiction. The proof is now complete.			
\end{proof}

\begin{proposition}\label{AlmostSplitS_H}
	Keep in mind all notations used in Construction  \ref{Secound constrution}. The short exact sequence $S_{H}$ is an almost split sequence.
\end{proposition}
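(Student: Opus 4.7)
The strategy is to transport the almost split property of $\epsilon$ in $\CS_{\CX}(\La)$ to $S_H$ in $\mmod\underline{\CX}$ via the functor $\Psi_{\CX}$. The key ingredients will be the fullness, density and objectivity of $\Psi_{\CX}$ (Theorem \ref{Thefirst}), together with the obvious observation that $\Psi_{\CX}$ preserves retractions and isomorphisms.

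First I would dispatch the indecomposability of $F$. From Construction \ref{Secound constrution} the first row of the diagram $(**)$ is a \emph{minimal} projective resolution of $F$: indeed $d$ equals $s_F$ under the convention fixing minimal resolutions, and $d$ is the left-hand term of an almost split sequence in $\CS_{\CX}(\La)$, hence indecomposable; moreover the argument already carried out (showing $F \neq 0$) ruled out $d \in \CV$. Lemma \ref{indecompsable functor and monomorphosms} then yields the indecomposability of $F$.

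The main step will be verifying the right almost split property. Given a non-retraction $\alpha : N \to H$ in $\mmod\underline{\CX}$ with $N$ indecomposable, I would lift $\alpha$ to a morphism $\tilde\alpha : s_N \to s_H$ in $\CS_{\CX}(\La)$ satisfying $\Psi_{\CX}(\tilde\alpha) = \alpha$; this is possible by fullness of $\Psi_{\CX}$ because $\Psi_{\CX}(s_N) = N$ by construction of $s_N$ from the minimal projective resolution of $N$ (including the projective case $N = (-,\underline{X})$, where $s_N = (\Omega_{\La}(X) \hookrightarrow P_X)$). The object $s_N$ is indecomposable in $\CS_{\CX}(\La)$ in both the non-projective case (by Lemma \ref{indecompsable functor and monomorphosms}) and the projective case (by direct inspection of the minimal monomorphism). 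The lift $\tilde\alpha$ cannot be a retraction: otherwise $s_H$ would be an indecomposable summand of the indecomposable $s_N$, forcing $\tilde\alpha$ to be an isomorphism, whence $\alpha = \Psi_{\CX}(\tilde\alpha)$ would be an isomorphism too, contradicting the assumption on $\alpha$. The almost split property of $\epsilon$ then furnishes a factorization of $\tilde\alpha$ through the middle-term morphism $(\psi_1,\psi_2) : s_{F*H} \to s_H$; applying $\Psi_{\CX}$ and noting, from diagram $(**)$, that $\Psi_{\CX}$ sends $(\psi_1,\psi_2)$ to the epimorphism $g : G \to H$ of $S_H$, produces the desired factorization of $\alpha$ through $g$. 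A general $N$ is handled by decomposing into indecomposable summands.

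Finally, since $S_H$ is non-split (by the preceding lemma) with indecomposable ends $F$ and $H$ --- both having local endomorphism rings in the Krull--Schmidt abelian category $\mmod\underline{\CX}$ --- the right almost split property suffices, by a standard characterization, to conclude that $S_H$ is an almost split sequence. I expect the most delicate point to be the retraction argument for $\tilde\alpha$, since it depends on the indecomposability of $s_N$ uniformly across the projective and non-projective cases and on the preservation of sections and retractions by $\Psi_{\CX}$.
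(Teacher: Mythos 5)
Your proposal is correct and follows essentially the same route as the paper: lift a non-retraction $N\to H$ along the (full, by construction via minimal projective resolutions) functor $\Psi_{\CX}$ to a non-retraction $s_N\to s_H$, factor it through the almost split sequence $\epsilon$, and push the factorization back down with $\Psi_{\CX}$, concluding via the standard criterion for a non-split exact sequence with indecomposable end terms. The only cosmetic differences are that the paper sees at once that $(v_1,v_2)$ cannot be a retraction because any additive functor preserves retractions (so no reduction to indecomposable $N$ or appeal to the Krull--Schmidt property is needed there), and it quotes \cite[Theorem 2.14]{AR1} explicitly for the final criterion.
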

\begin{proof}
By Lemma \ref{indecompsable functor and monomorphosms}, $F$ and $H$ are indecomposable and also by the preceding lemma $S_H$ is non-split. 
Invoking \cite[Theorem 2.14]{AR1}, although it  is stated originally for abelian categories, but however the proof still works for the  exact categories, to prove $\delta$ to be an almost split sequence it is enough to show that $f$  and $g$ respectively are left and right almost split. We will do it only for $g$. Let $v:H'\rt H$ be a non-retraction. Considering it as a morphism in $\mmod \CX$, it can be lifted to their minimal projective resolutions and then returning to $\CS_{\CX}(\La)$ via the Yoneda lemma  to reach  the following morphism  
{\footnotesize \[ \xymatrix@R-2pc {  & A_{H'}\ar[dd]^{s_{H'}}  & A_{H}\ar[dd]~  \\   &  _{\ \ \ \ \ \  \ \ \  }\ar[r]^{v_1}_{v_2}  \ar[r]&_{\ \ \ \ \ }{s_{H}}   \\ & B_{H'} & B_{\rm{H }}}\]}
in $\CS_{\CX}(\La)$ such that $\Psi_{\CX}(v_1, v_2)=v$ The morphism $(v_1,~~v_2)$ is not retraction. Otherwise, it follows $v$ so is, a contradiction. Hence since $\epsilon$ is an almost split sequence,  $(v_1,~~v_2)$ factors thorough the epimorphism of $\epsilon$ via
{\footnotesize \[ \xymatrix@R-2pc {  & A_{H'}\ar[dd]^{s_{H'}}  &A_F\oplus A_{H}\ar[dd]~  \\   &  _{\ \ \ \ \ \  \ \ \  }\ar[r]^>>>>>>>{h'_1}_>>>>>>>{h'_2}  \ar[r]&_{\ \ \ \ \ \ \ \ \ }{s_{F*{H}}}   \\ & B_{H'} & B_F\oplus B_{H} }\]}
Now by applying the functor $\Psi_{\CX}$ on such a factorization, we see that the morphism $v$ factors through $g$ via $\Psi_{\CX}(h_1',~~h_2')$, as required. So we are done.
\end{proof}

As we have observed in the above a recipe  for constructing the almost split sequence in $\mmod \underline{\CX}$ for a  given non-projective indecomposable functor  $H$ of $\mmod \underline{\CX}$ is  via computing the almost split sequence  ending at object $s_{H}$ in $\CS_{\CX}(\La)$. Dually, we can do a similar process for a given non-injective indecomposable  functor $F$ of $\mmod \underline{\CX}$ to compute the almost split sequence in $\mmod \underline{\CX}$ starting at $F$. A direct consequence of these observations is the following result.
\begin{proposition}\label{The exsitence of almosat split via monomorphism catgeories}
	Let $\CX$ be the same as in Set up \ref{Setup1}. Then, $\mmod \underline{\CX}$ has almost split sequences.
\end{proposition}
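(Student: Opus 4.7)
The plan is to establish both halves of the definition: every non-projective indecomposable functor is the ending term of an almost split sequence, and every non-injective indecomposable functor is the starting term of one. The right-almost-split half is already in hand. Given a non-projective indecomposable $H$ in $\mmod \underline{\CX}$, Lemma \ref{indecompsable functor and monomorphosms} gives that $s_H$ is indecomposable in $\CS_{\CX}(\La)$, and Lemma \ref{projective objects in monomorphism}(1) (together with minimality of $\eta_H$) ensures $s_H$ has neither $(P\st{1}\rt P)$ nor $(0\rt P)$ as a summand with $P$ projective, hence $s_H$ is not $\Ext$-projective in $\CS_{\CX}(\La)$. Setup \ref{Setup1} then supplies an almost split sequence $\epsilon$ in $\CS_{\CX}(\La)$ ending at $s_H$, and Construction \ref{Secound constrution} together with Proposition \ref{AlmostSplitS_H} transports $\epsilon$ through $\Psi_{\CX}$ to an almost split sequence $S_H$ in $\mmod\underline{\CX}$ ending at $H$.

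The left-almost-split half is the dual. Given a non-injective indecomposable $F$ in $\mmod\underline{\CX}$, I would again look at the associated monomorphism $s_F$ in $\CS_{\CX}(\La)$, which is indecomposable by Lemma \ref{indecompsable functor and monomorphosms}. By Lemma \ref{projective objects in monomorphism}(2)—which precisely requires the enough-injectives assumption on $\CX$ built into Setup \ref{Setup1}—the $\Ext$-injective indecomposables of $\CS_{\CX}(\La)$ are the objects $(I\st{1}\rt I)$ and $(0\rt I)$ with $I$ an $\Ext$-injective indecomposable of $\CX$; these are precisely the monomorphisms whose associated functor (via $\Psi_{\CX}$) is either zero or an $\Ext$-injective in $\mmod\underline{\CX}$. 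Hence $s_F$ is not $\Ext$-injective in $\CS_{\CX}(\La)$, so Setup \ref{Setup1} furnishes an almost split sequence $\epsilon'$ in $\CS_{\CX}(\La)$ \emph{starting} at $s_F$, which can then be transported by $\Psi_{\CX}$ to a short exact sequence $S^F$ in $\mmod\underline{\CX}$ starting at $F$.

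The verification that $S^F$ is almost split mirrors Construction \ref{Secound constrution} and Proposition \ref{AlmostSplitS_H} read in the opposite direction: one applies the Yoneda functor to the (horizontally split) rows of the dual of diagram $(*)$, observes the resulting cokernel sequence in $\mmod\underline{\CX}$ is non-split by the same minimal-length argument (a split sequence would give a direct-sum decomposition of the middle term of $\epsilon'$ contradicting indecomposability), and checks the left-almost-split property by lifting any non-section $F\rt F'$ to a non-section $(s_F\rt s_{F'})$ in $\CS_{\CX}(\La)$, factoring it through the monic of $\epsilon'$, and pushing the factorization back through $\Psi_{\CX}$.

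The main obstacle is the dualization itself: Construction \ref{Secound constrution} was phrased in terms of minimal projective resolutions, and a direct dualization would demand a parallel theory of minimal injective copresentations in $\mmod\underline{\CX}$, which is not developed in the paper. I would circumvent this by replacing the projective-resolution bookkeeping with a more symmetric argument that uses only the equivalence between $\CS_{\CX}(\La)$ and $\CF_{\CX}(\La)$ induced by the kernel/cokernel functors (as noted before Lemma \ref{projective objects in monomorphism}) to transfer the almost split sequence from $\CS_{\CX}(\La)$ starting at $s_F$ into one ending at the corresponding object in $\CF_{\CX}(\La)$, then use the already-established machinery on that side to produce the required sequence in $\mmod\underline{\CX}$ starting at $F$.
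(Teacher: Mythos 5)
Your first half --- producing, for each non-projective indecomposable $H$, the almost split sequence ending at $H$ by transporting the almost split sequence ending at $s_H$ in $\CS_{\CX}(\La)$ through Construction \ref{Secound constrution} and Proposition \ref{AlmostSplitS_H} --- is exactly the paper's argument, and your reason why $s_H$ is not $\Ext$-projective (it is not killed by $\Psi_{\CX}$, whereas every $\Ext$-projective indecomposable of $\CS_{\CX}(\La)$ is) is correct; the same reasoning correctly shows $s_F$ is not $\Ext$-injective when $F$ is non-injective. The paper disposes of the left-hand half with the single word ``dually'', and your third paragraph describes the right way to make that precise.

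The gap is in the circumvention you actually commit to in your last paragraph. The kernel and cokernel functors give a \emph{covariant} exact equivalence $\CS_{\CX}(\La)\simeq\CF_{\CX}(\La)$; such an equivalence sends an almost split sequence starting at $s_F$ to an almost split sequence starting at $\mathrm{Cok}(s_F)$ --- it preserves the orientation of the sequence and cannot convert ``starting at'' into ``ending at''. Moreover, the paper constructs no functor from $\CF_{\CX}(\La)$ to $\mmod\underline{\CX}$, so there is no ``already-established machinery on that side'' to invoke. What the proof actually requires is the genuine mirror of Construction \ref{Secound constrution}: take the almost split sequence $\epsilon'$ in $\CS_{\CX}(\La)$ starting at $s_F$, show that the three module-level rows of the resulting $3\times 3$ diagram are split exact by factoring suitable non-sections \emph{out of} $s_F$ (for instance $(s_F,1_{B_F}):s_F\rt (B_F\st{1}\rt B_F)$, and maps into objects $(I\st{1}\rt I)$ and $(0\rt I)$ with $I$ $\Ext$-injective in $\CX$ --- this is where the enough-injectives hypothesis of Setup \ref{Setup1} is used) through the left almost split monomorphism of $\epsilon'$, then apply the Yoneda functor and check non-splitness of the image; by \cite[Theorem 2.14]{AR1} it then suffices to verify only one of the two almost-split conditions. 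That is the content your third paragraph gestures at, and it is what should replace the detour through $\CF_{\CX}(\La)$.
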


Let us in continue give some examples of subcategories satisfying the conditions of Set up \ref{Setup1}. According to a theorem of Auslander and smal{\o} (\cite[Theorem 2.4]{AS}), one way to show that an extension-closed subcategory (of finitely generated modules over some Artin algebra) to have almost split sequences is to prove that it is functorially finite. When $\CX=\mmod \La$, in \cite{RS2}  was  proved that the subcategory  $\CS(\La)$ is functorially finite in $H(\La).$ 
So by the above-mentioned fact $\CS(\La)$ has almost split sequences.  In fact in \cite{RS2}, left and right minimal $\CS(\La)$-approximations for any object in $H(\La)$  are computed explicitly. These computations provide a tool for computing the almost split sequences in $\CS(\La)$. \\
Another example is $\CS_{\rm{Gprj}\mbox{-}\La}(\La)$ when $\La$ is Gorenstein. In fact, the Gorensteiness of $\La$ implies that $T_2(\La)$ is so, see \cite[Theorem 3.3.3]{EHS}. Hence, $\CS_{\rm{Gprj}\mbox{-}\La}(\La)=\rm{Gprj}\mbox{-}T_2(\La)$, see Lemma \ref{Gorenproj charecte}, is contravariantly finite in $\mmod T_2(\La)$ by \cite[Theorem 2.10]{Ho}. Thanks to \cite[Corollary 0.3]{KS}, we observe that $\CS_{\rm{Gprj}\mbox{-}\La}(\La)$  is functorially finite. Therefore, by Auslander-smal{\o}' s result, it has almost split sequences. In this way we obtain another subcategory satisfying Set up \ref{Setup1}.

\subsection{Exchange  between the almost split sequences in $\mmod \underline{\CX\cap\rm{Gprj}\mbox{-}\La}$ and $ \rm{Gprj}\mbox{-}\underline{\CX}$}
 As we have seen in the previous subsection a  connection between the almost split sequences in $\mmod \underline{\CX}$ and in $\CS_{\CX}(\La)$, where $\CX$ satisfying the conditions of Set up \ref{Setup1}, is given. Now let $\CX$ and $\CY=\CX\cap\rm{Gprj}\mbox{-}\La$ be the same as in Setup \ref{4.17}. We will give a similar connection between the almost split sequences of $\rm{Gprj}\mbox{-} \underline{\CX}$,  the subcategory of Gorenstein projective functors in $\mmod \underline{\CX}$,  and the ones of  $\mmod \underline{\CY}$.	

 Recall from the previous section that there is the  functor ${}_{\underline{\CY}}\hat{\varUpsilon}_{\underline{\CX}}:\mmod \underline{\CY}\rt \rm{Gprj}\mbox{-} \underline{\CX}$ which is fully faithful exact. For simplicity, set $\tilde{F }:= {}_{\underline{\CY}}\hat{\varUpsilon}_{\underline{\CX}}(F)$ for any $F$ in $\mmod \underline{\CY}$ and also $\tilde{f}:={}_{\underline{\CY}}\hat{\varUpsilon}_{\underline{\CX}}(f)$ for any morphism $f$ in $\mmod \underline{\CY}$. In the next results, we shall show the  functor ${}_{\underline{\CY}}\hat{\varUpsilon}_{\underline{\CX}}$ preserves the almost split sequences. 
\begin{proposition}\label{AlmsotsplitXY}
	Let $\CX$ and $\CY$ be the same as in Set up \ref{4.17}. Assume $\eta: 0 \rt F \st{f}\rt G \st{g}\rt H\rt 0 $ is a short exact sequence in $\mmod \underline{\CY}$. Then $\eta$  is an almost split sequence in $\mmod \underline{\CY}$ if and only if  its image under  ${}_{\underline{\CY}}\hat{\varUpsilon}_{\underline{\CX}}$
	$$\tilde{\eta}: 0 \rt \tilde{F } \st{\tilde{f}} \rt  \tilde{G}\st{\tilde{g}}\rt \tilde{H} \rt 0$$
so 	is 
 in  $\rm{Gprj}\mbox{-} \underline{\CX}.$
\end{proposition}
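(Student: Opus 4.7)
The plan is to exploit the full faithfulness and exactness of ${}_{\underline{\CY}}\hat{\varUpsilon}_{\underline{\CX}}$ from Proposition \ref{Proposition 4.10}(i), together with the description of its essential image in Proposition \ref{Proposition 4.10}(ii): every indecomposable of $\rm{Gprj}\mbox{-}\underline{\CX}$ lies in the image, except possibly the projective functors $(-, \underline{X})$ with $X\in\CX\setminus\CY$. These two features will let me shuttle non-retractions, and hence the almost split property, between $\mmod\underline{\CY}$ and $\rm{Gprj}\mbox{-}\underline{\CX}$.

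For the ``if'' direction, assume $\tilde{\eta}$ is almost split. Then $\eta$ cannot be split (the exact functor would split $\tilde{\eta}$ otherwise), and $F,H$ are indecomposable because the ring isomorphisms $\End_{\underline{\CY}}(F)\simeq\End_{\underline{\CX}}(\tilde{F})$ and $\End_{\underline{\CY}}(H)\simeq\End_{\underline{\CX}}(\tilde{H})$ transfer locality in the Krull--Schmidt setting. To verify that $g$ is right almost split I would take a non-retraction $v\colon M\to H$ and first note that $\tilde{v}\colon\tilde{M}\to\tilde{H}$ is still a non-retraction: any section $s$ of $\tilde{v}$ equals $\tilde{s'}$ by fullness, and faithfulness then forces $vs'=\id_H$. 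So $\tilde{v}$ factors through $\tilde{g}$, and lifting this factorization by full faithfulness produces a factorization of $v$ through $g$. The argument for $f$ is dual.

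For the ``only if'' direction, assume $\eta$ is almost split. Exactness of the functor makes $\tilde{\eta}$ a short exact sequence in $\rm{Gprj}\mbox{-}\underline{\CX}$; it is non-split (otherwise full faithfulness would lift the splitting to one of $\eta$), and has indecomposable end-terms by the same endomorphism-ring argument. The crucial step is to show that $\tilde{g}$ is right almost split. Given a non-retraction $v\colon L\to\tilde{H}$, I would split $L=L_{np}\oplus L_{p}$, where $L_{np}$ is the sum of the non-projective indecomposable summands (all in the essential image by Proposition \ref{Proposition 4.10}(ii), so $L_{np}\simeq\tilde{M}$ for some $M\in\mmod\underline{\CY}$) and $L_{p}$ is the sum of the projective indecomposable summands in $\mmod\underline{\CX}$. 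The restriction $v|_{L_{np}}$ remains a non-retraction, since a section of it would compose with $L_{np}\hookrightarrow L$ to give a section of $v$; by full faithfulness it corresponds to a non-retraction $M\to H$, which factors through $g$ by the almost split property of $\eta$, and applying the functor lifts this factorization through $\tilde{g}$. On $L_{p}$, projectivity of each summand in $\mmod\underline{\CX}$ makes $v|_{L_{p}}$ lift through the epimorphism $\tilde{g}$ automatically. Combining the two lifts yields the required factorization of $v$ through $\tilde{g}$; the left almost split property of $\tilde{f}$ is handled dually.

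The main obstacle is the treatment, in the ``only if'' direction, of those projective summands $(-, \underline{X})$ of $L$ with $X\in\CX\setminus\CY$, which sit outside the essential image of ${}_{\underline{\CY}}\hat{\varUpsilon}_{\underline{\CX}}$; my plan dispatches them by using only that $\tilde{g}$ is an epimorphism in $\mmod\underline{\CX}$ and that these summands are projective there. A subsidiary check is that $\tilde{H}$ is non-$\Ext$-projective in $\rm{Gprj}\mbox{-}\underline{\CX}$ (and symmetrically for $\tilde{F}$): this reduces to $\tilde{H}$ being non-projective in $\mmod\underline{\CX}$, which follows since $\tilde{H}|_{\underline{\CY}}=H$ is non-projective in $\mmod\underline{\CY}$, ruling out $\tilde{H}\simeq(-, \underline{X})$ for any $X\in\CX$.
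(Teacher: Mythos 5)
Your proposal is correct and follows essentially the same route as the paper's own proof: both reduce a non-retraction into $\tilde{H}$ to its indecomposable summands, handle the summands lying in the essential image of ${}_{\underline{\CY}}\hat{\varUpsilon}_{\underline{\CX}}$ by full faithfulness together with the almost split property of $\eta$, and dispatch the remaining projective summands $(-,\underline{X})$ by lifting along the epimorphism $\tilde{g}$. The only difference is that the paper writes out just the direction from $\eta$ almost split to $\tilde{\eta}$ almost split, whereas you also spell out the (routine) converse via full faithfulness.
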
 
 \begin{proof}
 	We only prove the  ``if'' part.
By Proposition \ref{Extextensionfunctor}, $\tilde{\eta}$ is a short exact sequence. Also, since ${}_{\underline{\CY}}\hat{\varUpsilon}_{\underline{\CX}}$ is fully faithful, $\tilde{\eta}$ is non-split with indecomposable ending terms. So analogue to the proof of Proposition \ref{AlmostSplitS_H}, it suffices to show that   $\tilde{f}$ and $\tilde{g}$ are respectively left and right almost split. We will do it only for $\tilde{g}$. Let $h: D \rt \tilde{H}$ be a non-retraction. We may assume $D$ is indecomposable. If $D=\tilde{V}$ for  some $V$ in $\mmod \underline{\CY}$, then $h=\tilde{d}$ for some $d:V \rt H$. The morphism $d$ can not be a retraction because of being fully faithful of ${}_{\underline{\CY}}\hat{\varUpsilon}_{\underline{\CX}}$. Hence it factors through $g$. Then by applying  the functor ${}_{\underline{\CY}}\hat{\varUpsilon}_{\underline{\CX}}$ on the factorization we obtain the desired factorization of $h$ through $\tilde{g}$.  If $D$ does not lie in the essential image of ${}_{\underline{\CY}}\hat{\varUpsilon}_{\underline{\CX}}$, then $D\simeq (-, \underline{X})$ for some $X$ not being  in $\CY$, see Proposition \ref{Proposition 4.10}. In this case,  the result clearly follows since $D$ is projective. 
 \end{proof}

 By putting together Proposition \ref{AlmostSplitS_H} and Proposition \ref{AlmsotsplitXY} we get the next result.
 \begin{theorem}\label{Translation}
 	Let $\CX$ and $\CY$ be the same as in Set up \ref{4.17} and further $\CY$ satisfies the conditions in Set up \ref{Setup1}. Assume  $G$ is a non-projective indecomposable functor in  $\rm{Gprj} \mbox{-}\underline{\CX}$. Then,  there is a non-projective indecomposable functor $F$ in $\mmod \underline{\CY}$ such that $G={}_{\underline{\CY}}\hat{\varUpsilon}_{\underline{\CX}}(F)$ and
 	
 	$$\tau_{\rm{Gprj}\mbox{-}\underline{\CX}} (G)\simeq {}_{\underline{\CY}}\hat{\varUpsilon}_{\underline{\CX}}(\tau_{\underline{\CY}}(F))\simeq {}_{\underline{\CY}}\hat{\varUpsilon}_{\underline{\CX}}\circ \Psi_{\CY}(\tau_{\CS_{\CY}(\La)}(s_F)),$$ 
 	where $\tau_{\underline{\CY}}$ denotes the Auslander-Reiten translation in $\mmod \underline{\CY}.$
 \end{theorem}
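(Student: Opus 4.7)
The plan is to identify a specific $F\in\mmod\underline{\CY}$ with $\tilde{F}\simeq G$ and then transfer the almost split sequence ending at $F$ first through the equivalence underlying $\Psi_{\CY}$ and then through ${}_{\underline{\CY}}\hat{\varUpsilon}_{\underline{\CX}}$. For the production of $F$, since $G$ is non-projective indecomposable Gorenstein projective, Theorem \ref{Classification of Gorenstein projective functors} gives a minimal projective resolution $0\to(-,A)\to(-,B)\to(-,C)\to G\to 0$ in $\mmod\CX$ with $A,B,C\in\CX\cap\rm{Gprj}\mbox{-}\La=\CY$. Reading the same representables in $\mmod\CY$ yields a finitely presented $F\in\mmod\CY$ with the analogous resolution; since $G$ vanishes on $\prj\La$ so does $F$, hence $F\in\mmod\underline{\CY}$. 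By the explicit description of the extension functor in Construction \ref{Secounddconstruction}, we have $\tilde{F}\simeq G$. Full faithfulness of ${}_{\underline{\CY}}\hat{\varUpsilon}_{\underline{\CX}}$ (Proposition \ref{Proposition 4.10}) together with the fact that it sends the projective $(-,\underline{Y})$ in $\mmod\underline{\CY}$ to $(-,\underline{Y})$ in $\mmod\underline{\CX}$ forces $F$ to be indecomposable and non-projective.

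For the first isomorphism, the hypothesis that $\CY$ satisfies Setup \ref{Setup1} lets Proposition \ref{The exsitence of almosat split via monomorphism catgeories} produce an almost split sequence $\eta:0\to\tau_{\underline{\CY}}(F)\to E\to F\to 0$ in $\mmod\underline{\CY}$. Applying ${}_{\underline{\CY}}\hat{\varUpsilon}_{\underline{\CX}}$ and invoking Proposition \ref{AlmsotsplitXY}, the image $\tilde{\eta}$ is almost split in $\rm{Gprj}\mbox{-}\underline{\CX}$ and ends at $\tilde{F}\simeq G$. Uniqueness of almost split sequences ending at a fixed object then yields
\[ \tau_{\rm{Gprj}\mbox{-}\underline{\CX}}(G)\simeq {}_{\underline{\CY}}\hat{\varUpsilon}_{\underline{\CX}}(\tau_{\underline{\CY}}(F)). \]

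For the second isomorphism, apply Construction \ref{Secound constrution} with $\CX$ replaced by $\CY$, taking the given non-projective indecomposable functor to be our $F$. By Lemma \ref{indecompsable functor and monomorphosms} and Lemma \ref{projective objects in monomorphism}, $s_F$ is an indecomposable, non-$\Ext$-projective object of $\CS_{\CY}(\La)$, so there exists an almost split sequence $\epsilon:0\to \tau_{\CS_{\CY}(\La)}(s_F)\to h\to s_F\to 0$ in $\CS_{\CY}(\La)$. The diagram chase of Construction \ref{Secound constrution} combined with Proposition \ref{AlmostSplitS_H} shows that the almost split sequence in $\mmod\underline{\CY}$ ending at $F$ has leftmost term $\Psi_{\CY}(\tau_{\CS_{\CY}(\La)}(s_F))$, so $\tau_{\underline{\CY}}(F)\simeq \Psi_{\CY}(\tau_{\CS_{\CY}(\La)}(s_F))$, which combined with the first step completes the proof. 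The main obstacle is the bookkeeping in Step 1, namely checking that reading the resolution of $G$ inside $\mmod\CY$ really defines a non-projective indecomposable object of $\mmod\underline{\CY}$; after that, everything else is a formal consequence of the almost-split-preservation results already established.
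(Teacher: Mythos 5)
Your proposal is correct and follows essentially the same route as the paper, which presents this theorem as a direct combination of Proposition \ref{AlmostSplitS_H} and Proposition \ref{AlmsotsplitXY} (with the existence and non-projectivity of $F$ coming from Theorem \ref{Classification of Gorenstein projective functors} and Proposition \ref{Proposition 4.10}, exactly as you argue). Your write-up merely makes explicit the bookkeeping that the paper leaves implicit.
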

 As an application of our result in conjunction with \cite[Corollary 6.5]{RS2} we have:
\begin{corollary}
	Let $\La$ be a commutative   Nakayama algebra.  Let $\Gamma$ denote the stable Auslander algebra of $\La$, i.e. $\Gamma :=\underline{\rm{End}}(M),$ where $M$ is a representation generator of $\mmod \La.$ Then $\tau^{6}_{\Gamma}(N)\simeq N $ for each non-projective indecomposable $N $ in $\mmod \Gamma.$	
\end{corollary}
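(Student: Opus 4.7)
The plan is to reduce the statement to a periodicity result in the submodule category $\CS(\La)$ via the two main functors of the paper, and then invoke the referenced Ringel--Schmidmeier corollary. A commutative Nakayama algebra is, up to product, of the form $k[x]/(x^n)$ and hence self-injective; so $\mmod \La = \rm{Gprj}\mbox{-}\La$. Setting $\CX=\CY=\mmod \La$, all the hypotheses of Setup~\ref{4.17} and Setup~\ref{Setup1} are in force (the latter because $\CS(\La)$ is known to be functorially finite in $H(\La)$ and hence has almost split sequences). Moreover, $\underline{\mmod \La}$ is a triangulated category, so by the theorem of Freyd used in Corollary~\ref{CM-finitensee of triangular matrixes and stable auslander lagebrs}, $\mmod\underline{\mmod \La}$ is a Frobenius abelian category; in particular $\rm{Gprj}\mbox{-}\underline{\CX}=\mmod\underline{\CX}$, and the evaluation functor from subsection~\ref{simple} gives an equivalence $\mmod\underline{\CX}\simeq \mmod\Gamma$ under which the Auslander--Reiten translates correspond.

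With these identifications in place, my next step is to apply Theorem~\ref{Translation}. Since $\CY=\CX$, the functor ${}_{\underline{\CY}}\hat{\varUpsilon}_{\underline{\CX}}$ is the identity (up to equivalence), so for a non-projective indecomposable $G$ in $\mmod\underline{\CX}$ the theorem specializes to
\[
\tau_{\mmod\underline{\CX}}(G)\simeq \Psi_{\CX}\bigl(\tau_{\CS(\La)}(s_G)\bigr).
\]
To iterate, I need to verify that $s_{\tau_{\mmod\underline{\CX}}(G)}\simeq \tau_{\CS(\La)}(s_G)$ in $\CS(\La)/\CV$. This follows from the equivalence $\CS(\La)/\CV \simeq \mmod\underline{\CX}$ of Theorem~\ref{Thefirst}: the object $\tau_{\CS(\La)}(s_G)$ is indecomposable, and it cannot lie in $\CV$ because its image $\Psi_{\CX}(\tau_{\CS(\La)}(s_G))\simeq \tau_{\mmod\underline{\CX}}(G)$ is nonzero (being non-projective, thanks to the Frobenius property). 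Hence $\tau_{\CS(\La)}(s_G)$ represents the same class as $s_{\tau(G)}$ under the equivalence, and a straightforward induction yields
\[
\tau^{k}_{\mmod\underline{\CX}}(G)\simeq \Psi_{\CX}\bigl(\tau^{k}_{\CS(\La)}(s_G)\bigr)\qquad(k\ge 0).
\]

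Finally, I invoke \cite[Corollary 6.5]{RS2}, which asserts for the commutative Nakayama case that $\tau^{6}$ in the submodule category $\CS(\La)$ acts as the identity on indecomposable objects (modulo the ideal generated by $\CV$). Taking $k=6$ in the iterated formula gives
\[
\tau^{6}_{\mmod\underline{\CX}}(G)\simeq \Psi_{\CX}\bigl(\tau^{6}_{\CS(\La)}(s_G)\bigr)\simeq \Psi_{\CX}(s_G)= G.
\]
Translating back along $\mmod\underline{\CX}\simeq \mmod\Gamma$ yields $\tau^{6}_{\Gamma}(N)\simeq N$ for every non-projective indecomposable $N\in\mmod\Gamma$. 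The main obstacle in this argument is the iteration bookkeeping in the middle paragraph: one must carefully check that successive $\tau$-translates remain non-projective indecomposables and that the passage between $s_H$ and $\tau_{\CS(\La)}(s_G)$ is consistent modulo $\CV$. This is where the Frobenius property of $\mmod\underline{\mmod \La}$ and the exact form of the equivalence in Theorem~\ref{Thefirst} are both genuinely used.
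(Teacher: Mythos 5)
Your argument is correct and is essentially the proof the paper intends: the paper gives no separate argument beyond citing Proposition \ref{AlmostSplitS_H}/Theorem \ref{Translation} together with \cite[Corollary 6.5]{RS2}, and your reduction --- identifying $\mmod \Gamma$ with $\mmod \underline{\rm{mod}}\mbox{-}\La$, iterating $\tau_{\underline{\CX}}(G)\simeq\Psi_{\CX}(\tau_{\CS(\La)}(s_G))$ via $s_{\tau(G)}\simeq\tau_{\CS(\La)}(s_G)$, and using self-injectivity of a commutative Nakayama algebra (Frobenius property of $\mmod\underline{\CX}$) to keep all translates non-projective --- supplies exactly the omitted details. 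The only cosmetic remark is that Construction \ref{Secound constrution} already identifies the first term of the almost split sequence ending at $s_H$ as $s_F$ on the nose, so your bookkeeping modulo $\CV$ is automatic.
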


\section{Auslander-Reiten quivers}
This section in fact is established as a direct consequence of the results of the previous section  for   making a  connection between the Auslander-Reiten quivers of the different categories. The notion of the Auslander-Reiten quiver of a category is a convenient combinatorial tool to encode the  almost split sequences. Let us first recall the notion of an  Auslander-Reiten quiver.   Let  $\mathcal{C}$ be a 
full extension-closed   subcategory   of an abelian  Krull-Schmidt category $\CA$. It is defined by Auslander and Reiten for $\mathcal{C}$ an associated valued quiver $\Gamma_{\mathcal{C}}=(\Gamma^0_{\mathcal{C}}, \Gamma^1_{\mathcal{C}})$, called  the Auslander-Reiten quiver of $\mathcal{C}$, as follows: The vertices  $\Gamma^0_{\mathcal{C}}$ are in one to one correspondence with isomorphism classes of the objects in $\mathcal{C}$, are usually denoted by $[M]$, sometimes simply $M$, for an  indecomposable object $M$ in $\mathcal{C}$. There is an arrow $[M]\rt [N]$ in $\Gamma^1_{\mathcal{C}}$ with valuation $(a, b)$ if there are a minimal right almost split morphism  $M^a\oplus X\rt N$ such that $X$ has no direct summand isomorphic to $M$ in $\mathcal{C}$, and a minimal left almost split morphism $M \rt N^b\oplus Y$ in $\mathcal{C}$. If the valuation of an arrow is trivial $(1, 1)$, we  only write an arrow.
 Assume that the $\Ext$-projective objects in $\mathcal{C}$ and $\Ext$-injective objects in $\mathcal{C}$ coincide. We denote by $\Gamma^s_{\mathcal{C}}$ the valued quiver, called the stable Auslander-Reiten quiver of $\mathcal{C}$, obtained by removing all vertices corresponding to    $\Ext$-projective and $\Ext$-injective indecomposable objects in $\mathcal{C}$ and the arrows attached to them.\\
In order to simplify our notation for when $\mathcal{C}=\mmod \mathcal{D}$, we denote $\Gamma_{\mathcal{D}}$ instead of $\Gamma_{\mmod \mathcal{D}}$. In particular, $\Gamma_{\La}$ stands for  $\Gamma_{\mmod \La}$.
Since an essential source for minimal right (left) almost split morphisms  is the almost split sequences, then based on the definition of the Auslander-Reiten quivers,  an immediate consequence of our result in Section \ref{Section 5} is:

\begin{theorem}\label{main4}
	The following statements hold.
	\begin{itemize}
		\item [$(a)$] Let $\CX$ be the same as in Set up \ref{Setup1}. Then 
		\begin{itemize}
			\item[$(1)$] The  valued quiver $\Gamma_{\underline{\CX}}$ is a full valued subquiver of   $\Gamma_{\CS_{\CX}(\La)}$.
			\item[$(2)$] If $\CX$ is of finite representation type, then 
		 $\Gamma_{ \underline{\CX}}$ differs  only finitely many vertices with $\Gamma_{\CS_{\CX}(\La)}$.  In particular,  $$|\Gamma^0_{\CS_{\CX}(\La)}|=2|\rm{Ind}\mbox{-}\CX|+|\Gamma^0_{\underline{\CX}}|.$$
	
		\end{itemize} 
	\item [$(b)$]
	Let $\CX$ and $\CY=\CX\cap\rm{Gprj}\mbox{-}\La$ be the same as in Set up \ref{4.17} and further $\CY$ satisfies the conditions in Set up \ref{Setup1}. Then
	\begin{itemize} \item [$(1)$] The valued quiver  $\Gamma_{\underline{\CY}}$ is a full valued  subquiver of $\Gamma_{\rm{Gprj}\mbox{-} \underline{\CX}}$ and $\Gamma_{\CS_{\CY}(\La)}$, i.e.,
	$$\Gamma_{\CS_{\CY}(\La)}\hookleftarrow \Gamma_{ \underline{\CY}}\hookrightarrow \Gamma_{\rm{Gprj}\mbox{-} \underline{\CX}}.$$
	\item[$(2)$] If  $\CX$ is of finite representation type, then $\Gamma_{ \underline{\CY}}$ differs  only finitely many vertices with $\Gamma_{\CS_{\CY}(\La)}$  and $\Gamma_{\rm{Gprj}\mbox{-} \underline{\CX}}.$ In particular,  $$|\Gamma^0_{\CS_{\CY}(\La)}|=2|\rm{Ind}\mbox{-}\CY|+|\Gamma^0_{\underline{\CY}}|\ \text{and} \ 
	|\Gamma^0_{\rm{Gprj}\mbox{-} \underline{\CX}}|=|\Gamma^0_{ \underline{\CY}}|+|\rm{Ind}\mbox{-}(\CX \setminus\CY)|,$$ 
	where $\CX \setminus\CY$ denotes the subcategory of objects in $\CX$ but not in $\CY.$ 	
	\end{itemize}
\end{itemize}		
	\end{theorem}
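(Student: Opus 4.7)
The plan is to read off the vertex-level and arrow-level comparisons directly from the functorial results of Section \ref{Section 5}, since once the functors $\Psi_{\CX}$ and ${}_{\underline{\CY}}\hat{\varUpsilon}_{\underline{\CX}}$ are known to transport almost split sequences, the Auslander--Reiten quiver statements are essentially bookkeeping.

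For part $(a)(1)$, I would first describe the vertex inclusion $\Gamma^0_{\underline{\CX}}\hookrightarrow \Gamma^0_{\CS_{\CX}(\La)}$ by the rule $[H]\mapsto [s_H]$, where $s_H$ is the monomorphism appearing in the minimal projective resolution $\eta_H$ fixed at the start of Section \ref{Section 5}. Lemma \ref{indecompsable functor and monomorphosms} tells us this is a well-defined injection onto the set of vertices of $\Gamma_{\CS_{\CX}(\La)}$ that are not of the form $[(0\rt X)]$ or $[(X\st{1}\rt X)]$ for indecomposable $X\in\CX$. To verify that this is a \emph{full valued} subquiver, I would fix an arrow $[H_1]\rt [H_2]$ of $\Gamma_{\CS_{\CX}(\La)}$ whose endpoints both come from $\Gamma_{\underline{\CX}}$, and trace the corresponding summand through the almost split sequence built in Construction \ref{Secound constrution}: Proposition \ref{AlmostSplitS_H} shows that applying $\Psi_{\CX}$ to the almost split sequence in $\CS_{\CX}(\La)$ ending at $s_{H_2}$ yields the almost split sequence in $\mmod\underline{\CX}$ ending at $H_2$, and since $\Psi_{\CX}$ is full and kills precisely the summands from $\CV$, the multiplicity of $s_{H_1}$ in the middle term equals the multiplicity of $H_1$ in the middle term of the image sequence. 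Combined with the dual statement starting at $s_{H_1}$, this identifies both entries of the valuation, and conversely shows every arrow in $\Gamma_{\underline{\CX}}$ arises this way.

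For part $(a)(2)$, under finite representation type Theorem \ref{Thefirst} gives the equivalence $\CS_{\CX}(\La)/\CV\simeq\mmod\underline{\CX}$, so every indecomposable of $\CS_{\CX}(\La)$ not in $\CV$ corresponds bijectively to an indecomposable of $\mmod\underline{\CX}$; the remaining indecomposables of $\CS_{\CX}(\La)$ are exactly the objects $(0\rt X)$ and $(X\st{1}\rt X)$ as $X$ runs over $\rm{Ind}\mbox{-}\CX$, giving precisely $2|\rm{Ind}\mbox{-}\CX|$ extra vertices and hence the counting formula. For part $(b)(1)$, the left inclusion $\Gamma_{\underline{\CY}}\hookrightarrow\Gamma_{\CS_{\CY}(\La)}$ is simply part $(a)(1)$ applied to $\CY$ (which satisfies Setup \ref{Setup1} by hypothesis). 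For the right inclusion $\Gamma_{\underline{\CY}}\hookrightarrow\Gamma_{\rm{Gprj}\mbox{-}\underline{\CX}}$, I would use that ${}_{\underline{\CY}}\hat{\varUpsilon}_{\underline{\CX}}$ is fully faithful and exact (Proposition \ref{Proposition 4.10}) and that it preserves almost split sequences (Proposition \ref{AlmsotsplitXY}); fully faithfulness preserves Hom-spaces hence multiplicities, so valuations are preserved, and fullness guarantees the subquiver is full.

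For part $(b)(2)$, under finite representation type for $\CX$, Proposition \ref{Proposition 4.10}$(iii)$ identifies precisely the indecomposables in $\rm{Gprj}\mbox{-}\underline{\CX}$ missed by ${}_{\underline{\CY}}\hat{\varUpsilon}_{\underline{\CX}}$ as the projective functors $(-,\underline{X})$ with $X\in\rm{Ind}\mbox{-}(\CX\setminus\CY)$, which gives the second counting formula; the first is part $(a)(2)$ applied to $\CY$. The main obstacle I foresee is verifying that the valuations (not merely the underlying arrows) transfer correctly: an arrow with valuation $(a,b)$ encodes the multiplicities of an indecomposable as a summand in minimal right and left almost split morphisms, and one must check that neither $\Psi_{\CX}$ (which identifies morphisms modulo the ideal $[\CV]$) nor ${}_{\underline{\CY}}\hat{\varUpsilon}_{\underline{\CX}}$ can collapse or inflate these multiplicities on the relevant indecomposables. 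For $\Psi_{\CX}$ this reduces to noting that the middle term of the constructed almost split sequence decomposes in $\CS_{\CX}(\La)$ as $s_F\oplus (\text{summands in }\CV)$ on the $s_F$-isotypical part, so the $F$-multiplicity in $\mmod\underline{\CX}$ and the $s_F$-multiplicity in $\CS_{\CX}(\La)$ agree; for ${}_{\underline{\CY}}\hat{\varUpsilon}_{\underline{\CX}}$ fully faithfulness handles it directly.
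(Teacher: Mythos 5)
Your proposal is correct and follows essentially the same route as the paper: part $(a)(1)$ from Proposition \ref{AlmostSplitS_H} and its dual, $(a)(2)$ from the equivalence $\CS_{\CX}(\La)/\CV\simeq\mmod\underline{\CX}$ of Theorem \ref{Thefirst}, $(b)(1)$ from $(a)(1)$ together with Proposition \ref{AlmsotsplitXY}, and $(b)(2)$ from the identification of the indecomposables missed by ${}_{\underline{\CY}}\hat{\varUpsilon}_{\underline{\CX}}$ in Proposition \ref{Proposition 4.10}. Your extra care about transferring the valuations (tracking multiplicities of middle-term summands through $\Psi_{\CX}$, which kills exactly the $\CV$-summands, and through the fully faithful extension functor) is a welcome elaboration of a point the paper's own proof leaves implicit.
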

\begin{proof}
	$(a)$: the part $(1)$ follows from Proposition \ref{AlmostSplitS_H} and its dual. The part $(2)$ is a consequence of the equivalence $\CS_{\CX}(\La)/\CV\simeq \mmod \underline{\CX},$   where $\CV$ is additive closure of the  objects of the form  $(X \st{1}\rt X )$ or $ (0 \rt X)$, which $X$ runs through   of objects in $\mathcal{X}$. We observe by the equivalence only $2|\rm{Ind}\mbox{-}\CX|$ indecomposable objects, up to isomorphism, in $\CS_{\CX}(\La)$ vanish. $(b):$ the part $(1)$ follows from $(a)(1)$ and Proposition \ref{AlmsotsplitXY}. The first equality follows from part $(a)(2)$, and the latter equality follows from Corollary \ref{extension functor stable auslander algebra}.
\end{proof}
We specialize for when  $\CX=\mmod \La$ and $\La$ is a Gorenstein algebra. In this case $\CY$ is equal to $\rm{Gprj}\mbox{-}\La.$
\begin{theorem}\label{main5}
	Let $\La$ be a Gorenstein algebra. The Auslander-Reiten quiver 	   $\Gamma_{\underline{\rm{Gprj}}\mbox{-}\La}$ of $\rm{mod}\mbox{-} \underline{\rm{Gprj}}\mbox{-}\La$ is  embedded into $\Gamma_{\rm{Gprj}\mbox{-}T_2(\La)}$ and $\Gamma_{\rm{Gprj}\mbox{-} \underline{\rm{mod}}\mbox{-}\La}$ as a full valued subquiver, i.e.,
	$$\Gamma_{\rm{Gprj}\mbox{-}T_2(\La)}\hookleftarrow \Gamma_{\underline{\rm{Gprj}}\mbox{-}\La}\hookrightarrow \Gamma_{\rm{Gprj}\mbox{-} \underline{\rm{mod}}\mbox{-}\La}.$$	
	 Moreover, the stable Auslander-Reiten quivers $\Gamma^s_{\underline{\rm{Gprj}}\mbox{-} \La}=\Gamma^s_{\rm{Gprj}\mbox{-} \underline{\rm{mod}}\mbox{-}\La}$.  
\end{theorem}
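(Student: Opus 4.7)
The plan is to deduce Theorem \ref{main5} as a direct specialization of Theorem \ref{main4}(b) to the pair $(\CX,\CY)=(\mmod\La,\rm{Gprj}\mbox{-}\La)$, and then to identify the $\Ext$-projective/injective vertices that have to be stripped on each side. So the first step is to verify the hypotheses. The Gorensteinness of $\La$ gives an $n\geq 0$ with $\Omega^n(\mmod\La)\subseteq \rm{Gprj}\mbox{-}\La$, and $\mmod\La$ is trivially closed under projective cosyzygies, so $\CX=\mmod\La$ sits in Setup \ref{4.17}. For Setup \ref{Setup1} applied to $\CY=\rm{Gprj}\mbox{-}\La$: it is a contravariantly finite resolving subcategory of $\mmod\La$ over a Gorenstein algebra, its $\Ext$-injectives in the induced exact structure are the projective $\La$-modules (so it has enough injectives), and by Lemma \ref{Gorenproj charecte} combined with the Gorensteinness of $T_2(\La)$ (as recorded just after Proposition \ref{The exsitence of almosat split via monomorphism catgeories}), $\CS_{\CY}(\La)=\rm{Gprj}\mbox{-}T_2(\La)$ has almost split sequences.

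With the hypotheses verified, Theorem \ref{main4}(b)(1) delivers the required chain of full valued subquiver embeddings
$$\Gamma_{\rm{Gprj}\mbox{-}T_2(\La)}\hookleftarrow \Gamma_{\underline{\rm{Gprj}}\mbox{-}\La}\hookrightarrow \Gamma_{\rm{Gprj}\mbox{-}\underline{\rm{mod}}\mbox{-}\La},$$
with the right-hand embedding realized by the functor ${}_{\underline{\rm{Gprj}}\mbox{-}\La}\hat{\varUpsilon}_{\underline{\rm{mod}}\mbox{-}\La}$ of Corollary \ref{extension functor stable auslander algebra} and the left-hand one induced by $\Psi_{\rm{Gprj}\mbox{-}\La}$ from Construction \ref{FirstCoonstr}.

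For the moreover part, the plan is to identify on each side which indecomposables vanish upon passing to the stable AR quiver, and show these are precisely the vertices by which the two quivers differ. Since $\underline{\rm{Gprj}}\mbox{-}\La$ is a triangulated category (being the stable category of the Frobenius category $\rm{Gprj}\mbox{-}\La$), the cited result \cite[Theorem 3.1]{F} makes $\mmod\underline{\rm{Gprj}}\mbox{-}\La$ a Frobenius abelian category whose $\Ext$-projective $=\Ext$-injective indecomposables are exactly the representables $(-,\underline{X})$ with $X\in\rm{Gprj}\mbox{-}\La$ indecomposable. On the other side, since $\La$ is Gorenstein, $\underline{\mmod\La}$ is a Gorenstein category (via \cite[Theorem 3.11]{MT} invoked in the proof of Theorem \ref{Classification of Gorenstein projective functors}), so $\rm{Gprj}\mbox{-}\underline{\mmod\La}$ inherits a Frobenius exact structure whose $\Ext$-projective $=\Ext$-injective indecomposables are the representables $(-,\underline{X})$ with $X\in\mmod\La$ indecomposable. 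Under the embedding ${}_{\underline{\rm{Gprj}}\mbox{-}\La}\hat{\varUpsilon}_{\underline{\rm{mod}}\mbox{-}\La}$, each $(-,\underline{X})$ with $X\in\rm{Gprj}\mbox{-}\La$ is mapped to the corresponding representable in $\rm{Gprj}\mbox{-}\underline{\mmod\La}$, and by Proposition \ref{Proposition 4.10}(ii) the vertices by which the bigger quiver exceeds the smaller one are exactly the $(-,\underline{X})$ with $X$ indecomposable in $\mmod\La\setminus\rm{Gprj}\mbox{-}\La$, which are also projective-injective in $\rm{Gprj}\mbox{-}\underline{\mmod\La}$. Consequently, removing all projective-injective indecomposables (and the arrows attached to them) from both $\Gamma_{\underline{\rm{Gprj}}\mbox{-}\La}$ and $\Gamma_{\rm{Gprj}\mbox{-}\underline{\mmod\La}}$ leaves the same full subquiver, yielding $\Gamma^s_{\underline{\rm{Gprj}}\mbox{-}\La}=\Gamma^s_{\rm{Gprj}\mbox{-}\underline{\mmod\La}}$.

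The main obstacle I expect is the bookkeeping for the stable part: one must be careful that the embedding truly sends projective-injectives to projective-injectives (not merely projectives to projectives) and that no non-representable indecomposable accidentally becomes $\Ext$-injective on either side. Identifying $\Ext$-projective with $\Ext$-injective requires the Frobenius structure in both ambient categories, which is the technical heart of the argument and is where the Gorenstein hypothesis on $\La$ really pays off.
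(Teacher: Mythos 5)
Your proof is correct and follows the same route as the paper: Theorem \ref{main5} is obtained there precisely as the specialization of Theorem \ref{main4}(b) to $\CX=\mmod \La$ and $\CY=\rm{Gprj}\mbox{-}\La$, and the hypothesis checks you perform (Gorensteinness of $\La$ giving Setup \ref{4.17}, and functorial finiteness of $\CS_{\rm{Gprj}\mbox{-}\La}(\La)=\rm{Gprj}\mbox{-}T_2(\La)$ giving Setup \ref{Setup1}) are exactly those recorded after Proposition \ref{The exsitence of almosat split via monomorphism catgeories}. The paper states the ``moreover'' clause without argument, so your Frobenius-based identification of the $\Ext$-projective-injective vertices on the two sides, together with Proposition \ref{Proposition 4.10}(ii), is a correct filling-in of a detail the paper leaves implicit.
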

The above theorem when is more interesting that $\La$ is  of finite representation type. In fact, the above results say for  computing the Auslander-Reiten quiver of the subcategory of Gorenstein projective modules over the associated stable Auslander algebra $\rm{Aus}(\underline{mod}\mbox{-}\La)$ and the corresponding  triangular matrix algebra $T_2(\La)$ is enough to compute the Auslander-Reiten quiver of the corresponding stable Cohen-Macaulay Auslander algebra $\rm{Aus}(\underline{Gprj}\mbox{-}\La)$ and then adding finitely many vertices. As it will be seen in below the finite remaining part is not difficult to be added. Since $\rm{Aus}(\underline{Gprj}\mbox{-}\La)$ is  self-injective there are many results about the shape of the Auslander-Reiten quivers of   self-injective algebras in the literature,  which can be transfered to the corresponding subcategories of Gorenstein projective modules.

 For the triangular matrix case, in the following we will provide the structure of the  almost split sequences in $\CS_{\CX}(\La)$ ending at or starting from the indecomposable objects of the form $(X\st{1}\rt X)$ or $(0 \rt X)$. It is  helpful for getting completely $\Gamma_{\CS_{\CX}(\La)}$ while we know the $\Gamma_{ \underline{\CX}}.$

\begin{lemma}\label{AlmostSplittrivialmonomorphisms}
	Let $\CX$ be the same as in Set up \ref{Setup1}. Let $0 \rt  A\st{f} \rt B\st{g} \rt C \rt 0$  be an almost split sequence in $\CX.$ Then
	\begin{itemize}
		\item[$(1)$] The almost split sequence in $\CS_{\CX}(\La)$ ending at $(0\rt C)$ has the form 
		{\footnotesize  \[ \xymatrix@R-2pc {  &  ~ A\ar[dd]^{1}~   & A\ar[dd]^{f}~  & 0\ar[dd] \\ 0 \ar[r] &  _{ \ \ \ \ } \ar[r]^{1}_{f}  &_{\ \ \ \ \ } \ar[r]^{0}_{g} _{\ \ \ \ \ }&  _{\ \ \ \ \ }\ar[r] & 0. \\ & A & B &C}\]}
		\item [$(2)$] Let $e:A \rt I$ be a left minimal morphism with $\rm{Ext}$-injective $I$. Then the almost split sequence ending at $(C\st{1}\rt C)$ in $\CS_{\CX}(\La)$ has the form
		{\footnotesize  \[ \xymatrix@R-2pc {  &  ~ A\ar[dd]^{e}~~   & B\ar[dd]^{h}~  & C\ar[dd]^1 \\ 0 \ar[r] &  _{ \ \ \ \ } \ar[r]^{f}_{[1~~0]^t}  &_{\ \ \ \ \ } \ar[r]^{g}_{[0~~1]} _{\ \ \ \ \ }&  _{\ \ \ \ \ }\ar[r] & 0, \\ & I & I \oplus C &C}\]} 	  	 	
		where $h$ is the map $[e'~~g]^t$ with $e':B \rt I$ is an extension of $e.$
		\item [$(3)$] Let $b:P_C\rt C$ be the projective cover of $C$. Then the almost split sequence starting  at $(0 \rt A)$ in $\CS_{\CX}(\La)$ has the form 
			{\footnotesize  \[ \xymatrix@R-2pc {  &  ~ 0\ar[dd]   & \Omega_{\La}(C)\ar[dd]^{h}~  & \Omega_{\La}(C)\ar[dd]^i \\ 0 \ar[r] &  _{ \ \ \ \ } \ar[r]^{0}_{[1~~0]^t}  &_{\ \ \ \ \ } \ar[r]^{1}_{[0~~1]} _{\ \ \ \ \ }&  _{\ \ \ \ \ }\ar[r] & 0, \\ & A & A \oplus P_C &P_C}\]}
		where $h$ is  the kernel of morphism $[f~~b']:A\oplus P_C\rt B$, here $b'$ is a lifting of $b$ to $g$. 
		 
	\end{itemize}
\end{lemma}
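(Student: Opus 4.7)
The plan is, for each of the three parts, to establish that the displayed diagram gives a non-split short exact sequence in $\CS_{\CX}(\La)$ with indecomposable ending terms, and then to verify the corresponding one-sided almost split property; by \cite[Theorem 2.14]{AR1}, already invoked in the proof of Proposition~\ref{AlmostSplitS_H}, this suffices. Row-wise exactness in $\mmod \La$ is immediate (the top row is the given almost split sequence in $\CX$; the bottom row is split exact in (1) and (2), or the projective-cover sequence in (3)), and the structure morphism of the middle object is a monomorphism because one of its components ($f$, $i$, or $i$) is mono. That the cokernel of this middle structure morphism lies in $\CX$ follows from a $3\times 3$/snake argument combined with the fact that $\CX$ is resolving, and non-splitness of the candidate sequence in $\CS_{\CX}(\La)$ is immediate, since splitting the sequence in $H(\La)$ would, by comparison of middle terms, split the underlying almost split sequence in $\CX$.

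For part (1), I would verify that $(0, g)$ is right almost split at $(0 \rt C)$. A morphism $(X \st{h'}\rt Y) \rt (0 \rt C)$ in $\CS_{\CX}(\La)$ is forced to have the form $(0, \psi)$ with $\psi h' = 0$, and it is a retraction iff $\psi$ is a split epimorphism in $\mmod \La$. For a non-retraction $(0, \psi)$, the right almost split property of $g$ in $\CX$ supplies $w\colon Y \rt B$ with $g w = \psi$, after which $\phi_1\colon X \rt A$ is determined uniquely by $f \phi_1 = w h'$ (well-defined because $f = \ker g$). Part (2) is parallel: a non-retraction $(v_1, v_2)\colon (X \st{h'}\rt Y) \rt (C \st{1}\rt C)$ satisfies $v_1 = v_2 h'$ with $v_1$ not split epic, so $v_1 = g w_1$ for some $w_1$; taking the lift to have second component $[u, v_2]^t$ reduces the commutativity condition to $u h' = e' w_1$, and existence of $u$ is precisely the $\Ext$-injectivity of $I$ in $\CX$ applied to the proper monomorphism $h'$.

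Part (3) will be the main obstacle, and I plan to verify that $(0, [1,0]^t)$ is left almost split at $(0 \rt A)$. A morphism $(0, v)\colon (0 \rt A) \rt (X \st{h'}\rt Y)$ is a section exactly when the induced map $\bar v\colon A \rt Y/\im(h')$ is a split monomorphism. Assuming non-section, and noting that $Y/\im(h') \in \CX$, the left almost split property of $f$ in $\CX$ yields $\bar w\colon B \rt Y/\im(h')$ with $\bar w f = \bar v$; projectivity of $P_C$ then lifts $\bar w b'$ to some $u\colon P_C \rt Y$. Using the sign convention that $h = [\tilde h, i]^t$ is the kernel of $[f, -b']$ (so that $f \tilde h = b' i$), one checks that $v \tilde h - u i$ maps to zero in $Y/\im(h')$ and hence factors uniquely through the monomorphism $h'$ to produce the required $\phi_1\colon \Omega_\La(C) \rt X$. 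The delicate work of the proof is concentrated here, in the interplay between the projective cover, the left almost split property in $\CX$, and the careful tracking of sign conventions for $\tilde h$.
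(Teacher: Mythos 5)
Your argument is correct, but it takes a genuinely different route from the paper's. The paper disposes of $(1)$ and $(2)$ by citing \cite[Proposition 7.1]{RS2} (stated there for $\CX=\mmod\La$) and asserting that the argument carries over to the relative setting, and it obtains $(3)$ only indirectly: it quotes the almost split sequence in the epimorphism category $\CF_{\CX}(\La)$ starting at $(A\st{1}\rt A)$ from \cite[Proposition 7.4]{RS2} and transports it to $\CS_{\CX}(\La)$ through the kernel-functor equivalence $\CF_{\CX}(\La)\simeq\CS_{\CX}(\La)$. You instead verify everything directly inside $\CS_{\CX}(\La)$: reduce to a one-sided almost split property via \cite[Theorem 2.14]{AR1} exactly as in Proposition \ref{AlmostSplitS_H}, then build the factorizations by hand using the almost split property of $g$ (resp. $f$) in $\CX$ together with $f=\ker g$ in $(1)$, the extension property of the $\Ext$-injective $I$ against the proper monomorphism $h'$ in $(2)$, and projectivity of $P_C$ plus the relation $f\tilde h=b'i$ in $(3)$; these computations check out, including your sign convention in $(3)$ (the two choices of kernel differ by the automorphism $\mathrm{diag}(-1,1)$ of $A\oplus P_C$ and give isomorphic sequences). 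What your route buys is a self-contained proof that makes the validity in the relative setting transparent, especially for $(3)$, which the paper gets only by transfer; what it costs is length, plus two small points you should tighten. First, for $(3)$ non-splitness is not literally ``by comparison of middle terms'' of a component row (the second-component row $0\rt A\rt A\oplus P_C\rt P_C\rt 0$ is already split); rather, a splitting retraction $[1_A~~s]:A\oplus P_C\rt A$ annihilating $h$ forces $\tilde h=-si$, hence $(b'+fs)i=0$, so $b'+fs$ factors through $b$ and $g$ becomes a split epimorphism, a contradiction. Second, the indecomposability of the end terms needed for the one-sided criterion uses the left minimality of $e$ in $(2)$ and the superfluity of $\Omega_{\La}(C)$ in $P_C$ in $(3)$; both are routine but should be recorded.
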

\begin{proof}
	The assertions $(1)$ and $(2)$ are  proved in \cite[Proposition 7.1]{RS2} for when $\CX=\mmod \La.$ The argument given in there still works for our setting. To prove the  assertion $(3)$, with \cite[Proposition 7.4]{RS2}, which still works in our setting,  the almost split sequence in $\CF_{\CX}(\La)$ starting at $(A \st{1}\rt A)$ has the form 
		{\footnotesize  \[ \xymatrix@R-2pc {  &  ~ A\ar[dd]^{1}~   & A\oplus P_C\ar[dd]^{d}~  & P_C\ar[dd]^b \\ 0 \ar[r] &  _{ \ \ \ \ } \ar[r]^{[1~~0]^t}_{f}  &_{\ \ \ \ \ } \ar[r]^{[0~~1]}_{g} _{\ \ \ \ \ }&  _{\ \ \ \ \ }\ar[r] & 0 \\ & A & B &C}\]}
	where $d=[f~~b']$. The short exact sequence stated in $(3)$ is only obtained by applying the kernel functor on the  above one. Since an equivalence preserves  the almost split sequences so one can conclude the desired result. 	 
\end{proof}

In the rest of the section we present some examples to show explicitly that how our results help to compute the Auslander-Reiten quivers of the subcategories of Gorenstein projective modules.\\ 
In the sequel $k$ denotes an algebraic closed filed.

In the next example we show how by having the Auslander-Reiten quiver of the stable Auslander algebra of $\La$ one can build the Auslander-Reiten quiver of the submodule category $\CS(\La)$. The point here is to add the remaining vertices corresponding to isomorphism classes  of the  indecomposable objects in $\CS({\La})$ being  of the form  $(X\st{1}\rt X)$ and $(0\rt X)$  for some indecomposable $X$ in $\mmod \La.$ To  this aim,  Lemma \ref{AlmostSplittrivialmonomorphisms} will be helpful. \\
 For an indecomposable object $(A\st{f}\rt B)$ in $\CS(\La)$, we use the notation ``$AB_f$'' to show the corresponding vertex of isomorphism class of $(A\st{f}\rt B)$ in $\Gamma_{\CS(\La)}$. If no ambiguity may arise, we omit ``$f$''. Hence by our convention, $AA_{1}$ and $0A_0$, or simply  $AA$ and $0A$,  respectively always show the vertices in $\Gamma_{\CS(\La)}$ associated with  isomorphism classes  of the objects $(A\st{1}\rt A)$ and $(0\rt A)$. Moreover, we sometimes use bracket  to distinguish ``$A$'' and ``$B$''.

\begin{example} \label{Example1} Let $\La$ be the path algebra of the quiver $A_3:1\rt 2 \rt 3$. The Auslander-Reiten quiver $\Gamma_{\La}$ of $\La$ is 	
	$$
\xymatrix@-5mm{
	&&&&P_1=I_3\ar[dr]\\
	&&&P_2\ar[dr]\ar[ur]&&I_2\ar[dr]\\
	&&P_3\ar[ur]&&S_2\ar[ur]\ar@{..>}[ll]&&I_1\ar@{..>}[ll]	}
$$
As usual, the $P_i, I_i$ and $S_i$, respectively, show the  indecomposable projective, injective and simple modules corresponding to the vertex $i$. Then the stable Auslander algebra $A=\rm{Aus}(\underline{mod}\mbox{-}\La)$ is given by the following quiver
	$$\hskip .5cm \xymatrix@-4mm{
		{I_1}\ar [r]^{\alpha}&{I_2}\ar [r]^{\beta}& {S_2}}\hskip .5cm$$	
	bound by $\alpha \beta=0.$ The Auslander-Reiten quiver of $A$ is of the form
	$$
\xymatrix@-5mm{		
	&{\bsm011\esm}\ar[dr]&&{\bsm110\esm}\ar[dr]&\\
	{\bsm001\esm}\ar[ur]&&{\bsm010\esm}\ar[ur]\ar@{..>}[ll]&&{\bsm100\esm}\ar@{..>}[ll]	}
$$
Hence the indecomposable functors  $$(-, S_2)/\rm{rad}(-, S_2), (-, \underline{I_2}), (-, I_2)/\rm{rad}(-, S_2), (-, \underline{I_1}) \ \text{and} \ (-, I_1)/\rm{rad}(-, I_1)$$ in $\mmod\underline{ \rm{mod}}\mbox{-} \La$ are mapped  under the  evaluation functor to the indecomposable modules in  $\mmod A$ with  dimension vectors  ${\bsm001\esm}, {\bsm011\esm}, {\bsm010\esm}, {\bsm110\esm}$ and ${\bsm100\esm}$, respectively. For transforming our results to $\CS(\La)$ we need to compute the minimal projective resolutions of the indecomposable functors in $\mmod\mmod \La.$ The case for a  projective indecomposable functor is clear it is induced by getting  minimal projective resolution in $\mmod \La$, and for a simple functor it is obtained by computing an  almost split sequence in $\mmod \La.$ Hence by help of the $\Gamma_{\La}$ we have the following minimal projective resolutions in $\mmod \mmod \La$:
$$0 \rt (-, P_2)\rt (-, P_1\oplus S_2)\rt (-, I_2)\rt (-, I_2)/\rm{rad}(-, I_2)\rt 0$$
$$0 \rt (-, P_3)\rt (-, P_2)\rt (-, S_2)\rt (-, S_2)/\rm{rad}(-, S_2)\rt 0 $$
$$0 \rt (-, S_2)\rt (-, I_2)\rt (-, I_1)\rt(-, I_1)/\rm{rad}(-, I_1) \rt 0$$
$$0 \rt (-, S_3)\rt (-, P_1)\rt (-, I_2)\rt (-, \underline{I_2})\rt 0$$
$$0 \rt (-, P_2)\rt (-, P_1)\rt (-, I_1)\rt (-, \underline{I_1})\rt 0.$$
We can consider $\Gamma_A$ as a full subquiver of $\Gamma_{\CS(\La)}$ by Theorem \ref{main4}. Based on   the embedding any vertex in $\Gamma_A$ is corresponded to a vertex of $\Ga_{\CS(\La)}$ such that the associated indecomposable objects of the vertices  are related by the minimal projective resolution as listed in the above.   Now by adding the remaining part in view  of  Lemma \ref{AlmostSplittrivialmonomorphisms}, the Auslander-Reiten quiver of $\CS(\La)$ as follows.

	$$\xymatrix@R=0.05cm@C=0.15cm{ && 0P_1\ar[dr]& & & &
	 & & & & \\
		&0P_2\ar[dr]\ar[ur] & &
		S_3P_1\ar[dr]\ar@{.>}[ll] & &
		S_2S_2\ar[dr]\ar@{.>}[ll]
		& & 0I_1\ar[dr]\ar@{.>}[ll]
		& &
	&	& \\
	0P_3\ar[ur]\ar[ddr]	&& P_3P_2\ar[dr]\ar[ur]\ar[ddr]\ar@{.>}[ll] & &
		P_2[P_1\oplus S_2]\ar[ddr]\ar[dr]\ar[ur]\ar@{.>}[ll] & &
		S_2I_2\ar[ur]\ar[ddr]\ar@{.>}[ll]
		& & I_1I_1\ar@{.>}[ll] & &
		 \\&
		 &&
		P_2P_2\ar[ur] & &
		0I_2\ar[ur]\ar@{.>}[ll] & &
		 & &
		& \\ & P_3P_3\ar[uur]
		& & 0S_2\ar[uur]\ar@{.>}[ll]&
		 &P_2P_1\ar[uur]\ar[dr]\ar@{.>}[ll] &
		 &I_2I_2\ar[uur]\ar@{.>}[ll] &
		 & &
	  \\ &
		& & &		
		& &I_3I_3\ar[ur]  & &		
	& & }$$
Note that the vertices in the above diagram (or else where) which  are presented by the same symbol have to be identified.
			\end{example}

\begin{example}
	Let $\La=T_2(k[x]/(x^2))$, $k[x]$ be   the polynomial ring in one variable $x$ with coefficients in $k$. Then $\La$ is a $1$-Gorenstein algebra of finite representation type. In fact, it is an easy  example of  algebras considered in \cite{GLS}.  The algebra $\La$ is given by the quiver 
	\[
	\xymatrix{
		1 \ar@(ul,ur)^{\alpha_1}& 2 \ar[l]^{\beta}\ar@(ul,ur)^{\alpha_2}
	}
	\]
with relations $\alpha_1^2=\alpha^2_2=0$ and $\alpha_2\beta=\beta\alpha_1.$ Set $A=\rm{Aus}(\underline{mod}\mbox{-}\La)$ and $B=\rm{Aus}(\underline{\rm{Gprj}}\mbox{-}\La)$. Due to the computation given in \cite[Example 13.5]{GLS}, the Auslander-Reiten quiver $\Gamma_{\La}$ of $\La$ looks as the following

\[
\xymatrix@R=0.01cm@C=0.10cm{
	&&{\bsm 1&\\&1 \esm}\ar[ddrr]&&&&
	{\bsm 2&\\&2 \esm}\ar@{.>}[llll]\ar[ddrr]
	\\
	&&&&&&&&
	\\
{\bsm 1 \esm}\ar[uurr]\ar[ddrr]&&&&
	{\bsm 1&&2\\&1&&2 \esm}\ar[uurr]\ar[ddrr]\ar@{.>}[llll]&&&&
	{\bsm 2 \esm}\ar@{.>}[llll]
	\\
	{\bsm &2\\1&&2\\&1 \esm} \ar[drr]
	&&&&&&&&
	{\bsm &2\\1&&2\\&1 \esm}
	\\
	&&{\bsm &2\\1&&2 \esm}\ar[ddrr]\ar[uurr]\ar@{.}[ll]&&&&
{\bsm 1&&2\\&1 \esm}\ar@{.>}[llll]\ar[ddrr]\ar[uurr]\ar[rru]
	&&\ar@{.>}[ll]
	\\
	&&&&&&&&
	\\
	{\bsm 2 \esm}\ar[uurr]&&&&
	{\bsm &2\\1 \esm}\ar[uurr]\ar@{.>}[llll]&&&&
	{\bsm 1 \esm}\ar@{.>}[llll]
}
\] 
where the  vertices are displayed by the composition series. So the algebra  $A$ is given by the following quiver with the mesh relations (indicated by dashed lines) and the two vertices in the leftmost column have to be identified with the two vertices  in the rightmost column with respect to  the same sign.
	
		\[
		\xymatrix@R=0.05cm@C=0.15cm{
			&&&&&&
		\bullet \ar[ddll]
			\\
			&&&&&&&&
			\\
			\star&&&&
		\bullet\ar[ddll]\ar@{.>}[llll]&&&&
			\bullet\ar[uull]\ar[ddll]\ar@{.>}[llll]
			\\		
			&&&&&&&&			
			\\
			&&\bullet\ar[ddll]\ar[uull]\ar@{.}[ll]&&&&
			\bullet\ar[ddll]\ar[uull]\ar@{.>}[llll]
			&&\ar@{.>}[ll]
			\\
			&&&&&&&&
			\\
			\bullet&&&&
			\bullet\ar[uull] \ar@{.>}[llll]&&&&
			\star\ar[uull]\ar@{.>}[llll]		}
		\] 

The indecomposable modules with composition series ${\bsm &2\\1 \esm}, {\bsm 1&&2\\&1 \esm}$ and ${\bsm 1 \esm}$ are Gorenstein projective modules; take the  Gorenstein projective indecomposable module  $G_1, G_2$ and $G_3$ respectively with those composition series. Let $P_1$ and $P_2$ denote respectively  the  projective indecomposable modules  corresponding to the vertex $1$ and $2$ in the bound quiver of $\La$. The Auslander-Reiten quiver $\Gamma_{\rm{Gprj}\mbox{-}\La}$ of the subcategory $\rm{Gprj}\mbox{-}\La$ is given as the following

	\[
\xymatrix@R=0.05cm@C=0.15cm{
	&&&&&&
	P_1 \ar[ddrr]&&
	\\
	&&&&&&&&&&
	\\
&&&&
	G_3\ar[uurr]\ar[ddrr]&&&&
	G_2\ar[ddrr]\ar@{.>}[llll]&&
	\\		
	&&&&&&&&			
	\\
	&&G_2\ar[ddrr]\ar[uurr]&&&&
	G_1\ar[uurr]\ar@{.>}[llll]
	&& &&G_3\ar@{.>}[llll]
	\\
	&&&&&&&&&&
	\\
	&&&&
	P_2\ar[uurr] &&&&
	&&		}
\] 

In view of $\rm{Gprj}\mbox{-}\La$, 	the  stable Cohen-Macaulay Auslander algebra $B$ of $\La$  is given by the quiver 
	$$\hskip .5cm \xymatrix@-4mm{
		&G_2\ar @{<-}[dl]_a\ar[dr]^b\\
		G_3&& G_1\ar [ll]^{c}}\hskip .5cm$$
	with the relations $ab=bc=ca=0$. 	 The Auslander-Reiten quiver of $B$ is of the form 
	$$
	\xymatrix@-5mm{		
		&{\bsm0\\11\esm}\ar[dr]&&{\bsm1\\01\esm}\ar[dr]&&{\bsm1\\10\esm}\ar[dr]\\
		{\bsm0\\10\esm}\ar[ur]&&{\bsm0\\01\esm}\ar[ur]\ar@{.>}[ll]&&{\bsm1\\00\esm}\ar[ur]\ar@{.>}[ll]&&\ar@{.>}[ll]{\bsm0\\10\esm}	}
	$$
	where the vertices are displayed by the dimension vectors. Hence the dimension vectors ${\bsm0\\10\esm}, {\bsm0\\01\esm}$ and ${\bsm0\\10\esm}$ present respectively  the simple functors $S_{G_3}=(-,G_3)/\rm{rad}(-, G_3), S_{G_1}=(-,G_1)/\rm{rad}(-, G_1)$ and $S_{G_2}=(-,G_2)/\rm{rad}(-, G_2)$; also the dimension vectors ${\bsm0\\11\esm}, {\bsm1\\01\esm}$ and ${\bsm1\\10\esm}$ present respectively the  projective indecomposable functors $(-, \underline{G_1}), (-, \underline{G_2})$ and $(-, \underline{G_3})$.
	Hence the  Auslander-Reiten quiver $\Gamma_{\rm{Gprj}\mbox{-}A}$  of the subcategory of  Gorenstein projective modules over $A$, by Theorem \ref{main5},  contains the above quiver. Take the indecomposable modules $M, N, T, U$ in $\mmod \La$ such that their composition series respectively are presented as  ${\bsm 2 \esm}, {\bsm &2&\\1&&2 \esm}, {\bsm 1&&2\\&1&&2 \esm}$ and $	{\bsm 2&\\&2 \esm}$. In fact, they are those of indecomposable modules which are not included in the image of the  functor ${}_{\CG}\hat{\varUpsilon}_{\La}$. To get the full of $\Gamma_{\rm{Gprj}\mbox{-}A}$, we need only to see how the remaining projective indecomposable modules in $\rm{Gprj}\mbox{-}A$ being $(-, \underline{M}), (-, \underline{N}), (-, \underline{T})$ and $(-, \underline{U})$ can be added into the $\Gamma_{B}$.   It can be easily seen that the minimal right(left) almost morphisms $X \rt Y$ in $\mmod \La$,  where $X \  \text{or} \  Y \in \{M, N, T, U\}$, induce the  minimal right(left) almost morphisms $(-, \underline{X})\rt (-, \underline{Y})$ in $\rm{Gprj}\mbox{-}\underline{mod}\mbox{-}\La$. These minimal right(left) almost morphisms $(-, \underline{X})\rt (-, \underline{Y})$ give us the remaining  part of $\Gamma_{\rm{Gprj}\mbox{-}A}$. Now, we deduce  $\Gamma_{\rm{Gprj}\mbox{-}A}$ looks as follows:
 		$$
 	\xymatrix@-5mm{	&&&&(-, \underline{U})\ar[ddrr]&&&&\\&(-, \underline{G_3})\ar[dr]&&(-, \underline{T})\ar[ddrr]\ar[ur]&&&&&\\&&(-, \underline{N})\ar[dr]\ar[ur]&&&&(-, \underline{M})&&\\	
 	&(-, \underline{M})\ar[ur]&	&(-, \underline{G_1})\ar[dr]&&(-, \underline{G_2})\ar[dr]\ar[ur]&&(-, \underline{G_3})\ar[dr]\\ 		&&S_{G_3}\ar[ur]&&S_{G_1}\ar[ur]\ar@{.>}[ll]&&S_{G_2}\ar[ur]\ar@{.>}[ll]&&\ar@{.>}[ll]S_{G_3}	}
 	$$
 	Let us continue our computation in order to find the Auslander-Reiten quiver $\Gamma_{\rm{Gprj}\mbox{-}T_2(\La)}$ of the subcategory of Gorenstein projective modules over $T_2(\La)$. Analogue to Example \ref{Example1} we use the same notation to show   the indecomposable objects in $\rm{Gprj}\mbox{-}T_2(\La)$. Also  we need to find the minimal projective resolutions in $\mmod \rm{Gprj}\mbox{-}\La$ of the indecomposable functors in $\mmod \underline{\rm{Gprj}}\mbox{-}\La$. To do this, the almost split sequences in $\rm{Gprj}\mbox{-}\La$ help us to compute the minimal projective resolutions of the simple functors. We will do as follows: 
 	$$0 \rt (-, G_1)\rt (-, G_2)\rt (-, G_3)\rt S_{G_3}\rt 0$$
 	$$0 \rt (-, G_3)\rt (-, P_1\oplus G_1)\rt (-, G_2)\rt S_{G_2}\rt 0$$
 	$$0 \rt (-, G_2)\rt (-, G_3\oplus P_2)\rt (-, G_1)\rt S_{G_1}\rt 0$$
 	$$0 \rt (-, G_3)\rt (-, P_1)\rt (-, G_3)\rt (-, \underline{G_3})\rt 0$$
 	$$0 \rt (-, G_2)\rt (-, P_1\oplus P_2)\rt (-, G_2)\rt (-, \underline{G_2})\rt 0$$
 	$$0 \rt (-, G_1)\rt (-, P_2)\rt (-, G_1)\rt (-, \underline{G_1})\rt 0.$$
 	By Theorem \ref{main5}, 
 	 we just need to add to the quiver $\Gamma_{B}$ the vertices corresponding to the remaining indecomposable objects. This in view of Lemma \ref{AlmostSplittrivialmonomorphisms} is done as follows.
 	
 		$$\xymatrix@R=0.05cm@C=0.15cm{ &&0P_2\ar[dr]&& 0P_1\ar[dddr]& &P_2P_2\ar[dddr] & &
 		& & & & \\&0G_2\ar[dr]\ar[ur]&
 		&G_1P_2\ar[dr]\ar@{.>}[ll] & &
 		G_3G_3\ar[dr]\ar@{.>}[ll] & &
 		0G_2\ar[dr]\ar@{.>}[ll]
 		& & 
 		& &
 		&	& \\
 		&&G_1G_2\ar[ur]\ar[ddr]\ar[dr]	&& G_2[G_3\oplus P_2]\ar[dr]\ar[ur]\ar[ddr]\ar@{.>}[ll] & &
 		G_3[P_1\oplus G_1]\ar[ddr]\ar[dr]\ar[ur]\ar@{.>}[ll] & &
 		G_1G_2\ar[ddr]\ar[dr]\ar@{.>}[ll]
 		& & & &
 		\\&&& 0G_3\ar[ur]\ar[uuur]\ar
 		&&
 		G_2[P_1\oplus P_2]\ar[ur]\ar[uuur]\ar@{.>}[ll] & &
 		G_1G_1\ar[ur]\ar@{.>}[ll] & &0G_3\ar@{.>}[ll]
 		& &
 		& \\&& & G_2G_2\ar[uur]
 		& & 0G_1\ar[uur]\ar@{.>}[ll]&
 		&G_3P_1\ar[uur]\ar[dr]\ar@{.>}[ll] &
 		&G_2G_2\ar@{.>}[ll] &
 		& &
 		\\&& &
 		& &&		
 		& &P_1P_1\ar[ur]  & &		
 		& & }$$
  	
\end{example}
 We end up this paper by this fundamental question: As our results show that for an Artin algebra of finite representation type $\La$, one can realize the subcategories of Gorenstein projective modules over $\rm{Gprj}\mbox{-}T_2(\La)$ and $\rm{Gprj}\mbox{-}\rm{Aus}(\underline{\rm{mod}}\mbox{-}\La)$ with the modules category $\mmod \rm{Aus}(\underline{\rm{Gprj}}\mbox{-}\La)$. This question may arise here  whether it is possible to realize the subcategory of Gorenstein projective modules of any Artin algebra with the modules category    of some Artin algebra.

\section{acknowledgment}
The author gratefully thank to the referee for the constructive comments and recommendations
which definitely help to improve the readability and quality of the paper. He would like to thank Hideto Asashiba for some useful comments to improve the English of the manuscript.

\end{document}